\title{Multiscale techniques for parabolic equations}
\author[Axel M{\aa}lqvist]{Axel M{\aa}lqvist\textsuperscript{1,2}} 
\author[Anna Persson]{Anna Persson\textsuperscript{1}}
\newtheorem{mydef}{Definition}[section]
\newtheorem{mythm}[mydef]{Theorem}
\newtheorem{mylemma}[mydef]{Lemma}
\theoremstyle{remark}
\newtheorem{myremark}[mydef]{Remark}
\newtheorem{myex}[mydef]{Example}
\newtheorem*{myassump}{Assumptions}
\numberwithin{equation}{section}
\DeclareMathOperator*{\esssup}{ess\,sup}
\DeclareMathOperator*{\essinf}{ess\,inf}
\DeclareMathOperator*{\Span}{span}
\DeclareMathOperator{\supp}{supp}
\DeclareMathOperator{\diam}{diam}
\newcommand{\enorm}{\@ifstar\@enorms\@enorm}
\newcommand{\@enorms}[1]{%
  \left|\mkern-1.5mu\left|\mkern-1.5mu\left|
   #1
  \right|\mkern-1.5mu\right|\mkern-1.5mu\right|
}
\newcommand{\@enorm}[2][]{%
  \mathopen{#1|\mkern-1.5mu#1|\mkern-1.5mu#1|}
  #2
  \mathclose{#1|\mkern-1.5mu#1|\mkern-1.5mu#1|}
}
\begin{document}

\begin{abstract}
We use the local orthogonal decomposition technique introduced in \cite{Malqvist2014} to derive a generalized finite element method for linear and semilinear parabolic equations with spatial multiscale diffusion coefficient. We consider nonsmooth initial data and a backward Euler scheme for the temporal discretization. Optimal order convergence rate, depending only on the contrast, but not on the variations in the diffusion coefficient, is proven in the $L_\infty(L_2)$-norm. We present numerical examples, which confirm our theoretical findings.
\end{abstract}

\maketitle

\footnotetext[1]{Department of Mathematical Sciences, Chalmers University of Technology and University of Gothenburg SE-412 96 G\"{o}teborg, Sweden. }
\footnotetext[2]{Supported by the Swedish Research Council.}

\section{Introduction}

In this paper we study numerical solutions to a parabolic equation with a highly varying diffusion coefficient. These equations appear, for instance, when modeling physical behavior in a composite material or a porous medium. Such problems are often referred to as \emph{multiscale problems}.
 
Convergence of optimal order of classical finite element methods (FEMs) based on continuous piecewise polynomials relies on at least spatial $H^2$-regularity. More precisely, for piecewise linear polynomials, the error bound depends on $\|u\|_{H^2}$, where $\|u\|_{H^2} \sim \epsilon^{-1}$ if the diffusion coefficient varies on a scale of $\epsilon$. Thus, the mesh width $h$ must fulfill $h<\epsilon$ to achieve convergence. However, this is not computationally feasible in many applications. To overcome this issue, several numerical methods have been proposed, see, for example, \cite{Babuska11}, \cite{Engquist03}, \cite{Hughes98}, \cite{Malqvist2014}, \cite{Owhadi08}, \cite{Owhadi14}, and references therein. In particular, \cite{Owhadi08} and \cite{Owhadi14} consider linear parabolic equations. 

In \cite{Malqvist2014} a generalized finite element method (GFEM) was introduced and convergence of optimal order was proven for elliptic multiscale equations. The method builds on ideas from the variational multiscale method (\cite{Hughes98},\cite{Larson07}), which is based on a decomposition of the solution space into a (coarse) finite dimensional space and a residual space for the fine scales. The method in \cite{Malqvist2014}, often referred to as local orthogonal decomposition, constructs a generalized finite element space where the basis functions contain information from the diffusion coefficient and have support on small vertex patches. With this approach, convergence of optimal order can be proved for an arbitrary positive and bounded diffusion coefficient. Restrictive assumptions such as periodicity of the coefficients or scale separation are not needed. Some recent works (\cite{Henning2014}, \cite{HMP2014}, \cite{Malqvist2013}) show how this method can be applied to boundary value problems, eigenvalue problems, and semilinear elliptic equations. There has also been some recent work on the linear wave equation \cite{Abdulle2014}.

In this paper we apply the technique introduced in \cite{Malqvist2014} to parabolic equations with multiscale diffusion coefficients. For the discretization of the temporal domain we use the backward Euler scheme. Using tools from classical finite element theory for parabolic equations, see, e.g, \cite{larsson92}, \cite{larsson06}, \cite{Thomee2006}, and references therein, we prove convergence of optimal order in the $L_\infty(L_2)$-norm for linear and semilinear equations under minimal regularity assumptions and nonsmooth initial data. The analysis is completed with numerical examples that support our theoretical findings.

In Section~\ref{sec:problem} we describe the problem formulation and the assumptions needed to achieve sufficient regularity of the solution. Section~\ref{sec:LOD} describes the numerical approximation and presents the resulting GFEM. In Section~\ref{sec:error} we prove error estimates and in Section~\ref{sec:semilin} we extend the results to semilinear parabolic equations. Finally, in Section~\ref{sec:numerics} we present some numerical examples.

\section{Problem formulation}\label{sec:problem}
We consider the parabolic problem
\begin{alignat}{2}
\dot{u} - \nabla \cdot (A\nabla u) &= f,&\quad &\text{in } \Omega \times (0,T], \notag \\
u &= 0,& &\text{on } \partial \Omega \times (0,T], \label{parabolic}\\
u(\cdot,0) &= u_0,& &\text{in } \notag \Omega,
\end{alignat}
where $T>0$ and $\Omega$ is a bounded polygonal/polyhedral domain in $\mathbb{R}^d$, $d\leq 3$. We assume $A = A(x)$ and $f=f(x,t)$, that is, the coefficient matrix $A$ does not depend on the time variable. 

We let $H^1(\Omega)$ denote the classical Sobolev space with norm $$\|v\|^2_{H^1(\Omega)} = \|v\|^2_{L_2(\Omega)} + \|\nabla v\|^2_{L_2(\Omega)}$$ and $V=H^1_0(\Omega)$ the space of functions in $H^1(\Omega)$ that vanishes on $\partial \Omega$. We use $H^{-1}(\Omega)=V^\ast$ to denote the dual space to $V$. Furthermore, we use the notation $L_p(0,T;X)$ for the Bochner space with finite norm
\begin{align*}
\|v\|_{L_p(0,T;X)} &= \Big(\int_0^T \|v\|_X^p \, \mathrm{dt} \Big)^{1/p}, \quad 1\leq p<\infty,\\
\|v\|_{L_\infty(0,T;X)} &= \esssup_{0\leq t \leq T} \|v\|_X,
\end{align*}
where $X$ is a Banach space equipped with norm $\| \cdot\|_X$. Here $v\in H^1(0,T;X)$ means $v,\dot{v}\in L_2(0,T;X)$. The dependence on the interval  $[0,T]$ and the domain $\Omega$ is frequently suppressed and we write, for instance, $L_2(L_2)$ for $L_2(0,T;L_2(\Omega))$. Finally, we abbreviate the $L_2$-norm $\|\cdot\| := \|\cdot\|_{L_2(\Omega)}$ and the energy norm, $\enorm{ \cdot }:=\|A^{1/2}\nabla \cdot\|$.

To ensure existence, uniqueness, and sufficient regularity, we make the following assumptions on the data.
\begin{myassump} We assume
\begin{enumerate}
\item[(A1)] $A\in L_\infty(\Omega,\mathbb{R}^{d \times d})$, symmetric, and
\begin{align*}
0<\alpha&:=\essinf_{x \in \Omega} \inf_{v \in \mathbb{R}^d\setminus \{0\}} \frac{A(x) v \cdot v}{v \cdot v},\\
\infty > \beta&:=\esssup_{x \in \Omega} \sup_{v \in \mathbb{R}^d\setminus \{0\}} \frac{A(x) v \cdot v}{v \cdot v},
\end{align*}
\item[(A2)] $u_0 \in L_2$, 
\item[(A3)] $f,\dot{f} \in L_\infty(L_2)$.
\end{enumerate}
\end{myassump}
Throughout this work $C$ denotes constants that may depend on the bounds $\alpha$ and $\beta$ (often through the contrast $\beta/\alpha$), the shape regularity parameter $\gamma$ \eqref{shape} of the mesh, the final time $T$, and the size of the domain $\Omega$, but not on the mesh size parameters nor the derivatives of the coefficients in $A$. The fact that the constant does not depend on the derivatives of $A$ is crucial, since these (if they exist) are large for the problems of interest. This is sometimes also noted as $C$ being independent of \emph{the variations of $A$}.

We now formulate the variational form of problem \eqref{parabolic}. Find $u(\cdot,t) \in V$ such that $u(\cdot,0) = u_0$ and
\begin{align}\label{varform}
(\dot{u},v) + a(u,v) = (f,v), \quad \forall v \in V, \ t\in (0,T],
\end{align}
where $(u,v)=\int_{\Omega}uv$ and $a(u,v) = (A\nabla u,\nabla v)$. 

The following theorem states existence and uniqueness for \eqref{varform}. The proof is based on Galerkin approximations, see, e.g., \cite{Evans2010} and \cite{Lady68}.

\begin{mythm}\label{regularity}
Assume that (A1), (A2), and (A3) holds. Then there exists a unique solution $u$ to \eqref{varform} such that $u \in L_2(0,T;V)$ and $\dot{u}\in L_2(0,T;H^{-1})$.  
\end{mythm}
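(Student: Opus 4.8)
The plan is to use the standard Faedo–Galerkin method, exactly as indicated by the authors' remark that ``the proof is based on Galerkin approximations.'' I would first construct finite-dimensional approximations, derive uniform \emph{a priori} energy estimates, and then pass to the limit using weak compactness; uniqueness would follow from a separate energy argument exploiting the coercivity established in (A1).

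\emph{Construction and approximate problem.} First I would fix an orthonormal basis $\{\phi_k\}_{k=1}^\infty$ of $L_2(\Omega)$ consisting of eigenfunctions of the operator $-\nabla\cdot(A\nabla\cdot)$ (or, more elementarily, any smooth basis of $V$ that is orthogonal in $L_2$), and seek an approximate solution of the form $u_m(x,t) = \sum_{k=1}^m d_k^m(t)\,\phi_k(x)$ satisfying the Galerkin equations
\begin{align*}
(\dot{u}_m,\phi_k) + a(u_m,\phi_k) = (f,\phi_k), \quad k=1,\dots,m,
\end{align*}
together with the projected initial data $u_m(\cdot,0) = \sum_{k=1}^m (u_0,\phi_k)\phi_k$. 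This is a linear system of ODEs for the coefficients $d_k^m(t)$ with bounded (in fact constant-in-time, since $A$ is time-independent) matrix, so the Picard–Lindelöf theorem gives a unique absolutely continuous solution on all of $[0,T]$.

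\emph{A priori estimates.} Next I would test the Galerkin equations with $u_m$ itself (i.e. multiply by $d_k^m$ and sum over $k$), obtaining
\begin{align*}
\tfrac{1}{2}\tfrac{d}{dt}\|u_m\|^2 + \enorm{u_m}^2 = (f,u_m).
\end{align*}
Using coercivity $\enorm{u_m}^2 \geq \alpha\|\nabla u_m\|^2$ from (A1), the Cauchy–Schwarz and Young inequalities on the right-hand side, and integrating in time, I would obtain a bound on $\sup_{t}\|u_m(t)\|^2 + \int_0^T \|\nabla u_m\|^2\,\mathrm{dt}$ in terms of $\|u_0\|$ and $\|f\|_{L_2(L_2)}$, uniformly in $m$; here one uses that the $L_2$-projection of $u_0$ has norm bounded by $\|u_0\|$. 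This shows $u_m$ is bounded in $L_2(0,T;V)\cap L_\infty(0,T;L_2)$. From the equation itself, bounding $(\dot{u}_m,v)$ for $v\in V$ using the $V$-bound on $u_m$ and (A3), one then derives that $\dot{u}_m$ is bounded in $L_2(0,T;H^{-1})$.

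\emph{Passage to the limit and uniqueness.} With these uniform bounds I would extract (via Banach–Alaoglu) a subsequence with $u_m \rightharpoonup u$ weakly in $L_2(0,T;V)$ and $\dot{u}_m \rightharpoonup \dot{u}$ weakly in $L_2(0,T;H^{-1})$, then pass to the limit in the Galerkin equations against a fixed basis function $\phi_k$ and a scalar test function in time, and finally use density of $\Span\{\phi_k\}$ in $V$ to recover \eqref{varform}. The Aubin–Lions lemma provides strong $L_2(0,T;L_2)$ convergence, which is what lets me identify the initial condition $u(\cdot,0)=u_0$ correctly. For uniqueness, I would take two solutions, subtract, test the difference equation with the difference $w$, and apply Grönwall's inequality to conclude $w\equiv 0$. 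I expect the main technical obstacle to be the limit passage combined with correctly handling the weak time-derivative: verifying that the weak limit genuinely satisfies the pointwise-in-time variational identity and attains the initial data requires the compactness step and an integration-by-parts argument that is more delicate than the energy estimates themselves.
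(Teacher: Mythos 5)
Your proposal is correct and follows exactly the route the paper itself indicates: the paper does not write out a proof but cites the standard Faedo--Galerkin construction in Evans and Ladyzhenskaya, which is precisely what you reconstruct (approximate ODE system, energy estimates via coercivity from (A1), weak compactness, and Gr\"onwall for uniqueness). One small caution: for the $L_2(0,T;H^{-1})$ bound on $\dot{u}_m$ you need the $L_2$-projection onto the Galerkin space to be $H^1$-stable, so stick with your first choice of basis (eigenfunctions of $-\nabla\cdot(A\nabla\cdot)$, orthogonal in both $(\cdot,\cdot)$ and $a(\cdot,\cdot)$) rather than an arbitrary $L_2$-orthogonal basis.
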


\section{Numercial approximation}\label{sec:LOD}
In this section we describe the local orthogonal decomposition presented in \cite{Malqvist2014} to define a generalized finite element method for the multiscale problem \eqref{varform}. 

First we introduce some notation. Let \{$\mathcal{T}_h\}_{h>0}$ and \{$\mathcal{T}_H\}_{H>h}$ be families of shape regular triangulations of $\Omega$ where $h_K:= \diam(K)$, for $K\in\mathcal{T}_h$, and $H_K: = \diam(K)$, for $K\in\mathcal{T}_H$. We also define $H:=\max_{K\in \mathcal{T}_H} H_K$ and $h:=\max_{K\in \mathcal{T}_h} h_K$. Furthermore, we let $\gamma>0$ denote the shape regularity parameter of the mesh $\mathcal{T}_H$;
\begin{align}\label{shape}
\gamma:=\max_{K \in \mathcal{T}_H} \gamma_K, \ \text{with} \ \gamma_K:= \frac{\diam B_K}{\diam K}, \ \text{for}\ K\in \mathcal{T}_H,
\end{align}
where $B_K$ is the largest ball contained in $K$. 

Now define the classical piecewise affine finite element spaces
\begin{align*}
V_H &= \{v\in C(\bar{\Omega}): v=0 \text{ on } \partial\Omega, v|_T \text{ is a polynomial of degree} \leq 1, \forall K \in \mathcal{T}_H\},\\
V_h &= \{v\in C(\bar{\Omega}): v=0 \text{ on } \partial\Omega, v|_T \text{ is a polynomial of degree} \leq 1, \forall K \in \mathcal{T}_h\}.
\end{align*}
We let $\mathcal{N}$ denote the interior nodes of $V_H$ and $\varphi_x$ the corresponding hat function for $x\in \mathcal{N}$, such that $\Span(\{\varphi_x\}_{x \in \mathcal{N}})=V_H$. We further assume that $\mathcal{T}_h$ is a refinement of $\mathcal{T}_H$, such that $V_H\subseteq V_h$. 
Finally, we also need the finite element mesh $\mathcal{T}_H$ of $\Omega$ to be of a form such that the $L_2$-projection $P_H$ onto the finite element space $V_H$ is stable in $H^1$-norm, see, e.g., \cite{Bank2014}, and the references therein.

To discretize in time we introduce the uniform discretization 
\begin{align}\label{timedisc}
0=t_0<t_1<...<t_N=T, \text{ where } t_n-t_{n-1} = \tau. 
\end{align}
Let $U_n$ be the approximation of $u(t)$ at time $t=t_n$ and denote $f_n:=f(t_n)$. Using the notation $\bar\partial_t U_n = (U_n-U_{n-1})/\tau$ we now formulate the classical backward Euler FEM; find $U_n\in V_h$ such that
\begin{align}\label{femh}
(\bar\partial_t U_n,v) + a(U_n,v) = (f_n,v), \quad \forall v \in V_h,
\end{align} 
for $n=1,...,N$ and $U_0 \in V_h$ is some approximation of $u_0$. For example, one could choose $U_0=P_h u_0$, where $P_h$ is the $L_2$-projection onto $V_h$. We also define the operator $\mathcal{A}_h: V_h \rightarrow V_h$ by
\begin{align}\label{A_h}
(\mathcal{A}_h v,w) = a(v,w), \quad \forall v, w \in V_h.
\end{align}

The convergence of the classical finite element approximation \eqref{femh} depends on $\|D^2u\|$, where $D^2$ denotes the second order derivatives. If the diffusion coefficient $A$ oscillates on a scale of $\epsilon$ we have $\|D^2u\| \sim \epsilon^{-1}$. Indeed, defining $\mathcal{A}=-\nabla \cdot A \nabla$, elliptic regularity gives
\begin{align*}
\|D^2 u\| &\leq C_1 \|\Delta u\|\leq C_2\|A\Delta u\| \leq C_2\|\nabla \cdot A\nabla u - \nabla A \cdot \nabla u\|
\\&\leq C_2(\|\mathcal{A}u\|+ \|\nabla A \cdot \nabla u\|) \leq C_2(\|\mathcal{A}u\|+ C_A\|\nabla u\|) \leq C_3(1+C_A)\|\mathcal{A}u\|,
\end{align*}
where $C_A$ is a constant that depends on the derivatives (variations) of $A$. This inequality is sharp in the sense that $\mathcal{A}u$ and $\nabla A\cdot \nabla u$ does not cancel in general. The total error is thus $\|u(t_n)-U_n\| \sim (\tau + (h/\epsilon)^2)$, which is small only if $h<\epsilon$.

The purpose of the method described in this paper is to find an approximate solution, let us denote it by $\hat{U}$ for now, in some space $\hat{V} \subset V_h$, such that $\dim {\hat{V}} = \dim{V_H}$, for $H> h$, and the error $\|U_n-\hat{U}_n\|\leq CH^2$. Here $C$ is independent of the variations in $A$ and $\hat{U}_n$ is less expensive to compute than $U_n$. The total error is then the sum of two terms
\begin{align*}
\|u(t_n)-\hat{U}_n\| \leq \|u(t_n)-U_n\| + \|U_n-\hat{U}_n\|,
\end{align*}
where the first term is the error due to the standard FEM approximation with backward Euler discretization in time. This is small if $h$ is chosen small enough, that is, if $h$ resolves the variations of $A$. Hence, we think of $h>0$ as fix and appropriately chosen. Our aim is now to analyze the error $\|U_n-\hat{U}_n\|$. 

We emphasize that $\hat{V}=V_H$ is not sufficient. The total error would in this case be $\|u(t_n)-\hat{U}_n\| \sim (\tau + (H/\epsilon)^2)$, which is small only if $H<\epsilon$.

The next theorem states some regularity results for \eqref{femh}.

\begin{mythm}\label{femreg}
Assume that (A1), (A2), and (A3) holds. Then, for $1\leq n \leq N$, there exists a unique solution $U_n$ to \eqref{varform} such that $U_n \in V_h$. 
Furthermore, if $U_0=0$, then we have the bound
\begin{align}\label{eq:femreg1}
\|\bar\partial_t U_n\| \leq C(\|f\|_{L_\infty(L_2)} + \|\dot{f}\|_{L_\infty(L_2)}),
\end{align}
and, if $f=0$, then
\begin{align}\label{eq:femreg2}
\|\bar\partial_tU_n\| \leq Ct_n^{-1} \|U_0\|,\ n \geq 1, \quad \|\bar\partial_t\bar\partial_tU_n\| \leq Ct_n^{-2} \|U_0\|,\ n\geq 2,
\end{align}
where $C$ depends on $\alpha$ and $T$, but not on the variations of $A$. 
\end{mythm}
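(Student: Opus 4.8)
The plan is to prove this by mimicking the classical smoothing/stability estimates for the backward Euler scheme, but working entirely within the discrete framework so that all constants depend only on $\alpha$ (and $T$) and not on the variations of $A$. Existence and uniqueness of $U_n$ follows immediately: the scheme \eqref{femh} amounts to solving $(I + \tau \mathcal{A}_h)U_n = U_{n-1} + \tau f_n$ at each step, and since $\mathcal{A}_h$ is symmetric positive definite on $V_h$ (by (A1), using coercivity $a(v,v) \geq \alpha\|\nabla v\|^2$), the operator $I + \tau\mathcal{A}_h$ is invertible. The key tool throughout will be spectral/energy estimates for the discrete operator: I would use that $\mathcal{A}_h$ is self-adjoint and positive definite with respect to the $L_2$ inner product, so functions of $\mathcal{A}_h$ can be estimated by the corresponding bound on the spectrum. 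In particular the rational function $r(\lambda) = (1+\tau\lambda)^{-1}$ satisfies $|r(\lambda)| \leq 1$ and $\lambda^k |r(\lambda)|^j \leq C (j\tau)^{-k}$ for the relevant ranges, which is the discrete analogue of the smoothing property $\|\mathcal{A}^k e^{-t\mathcal{A}}\| \leq C t^{-k}$.

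For the inhomogeneous bound \eqref{eq:femreg1} with $U_0 = 0$, I would differentiate the scheme in the discrete-time sense. Writing $W_n := \bar\partial_t U_n$ and applying $\bar\partial_t$ to \eqref{femh}, one gets $(\bar\partial_t W_n, v) + a(W_n, v) = (\bar\partial_t f_n, v)$ for $n \geq 2$, with an initial relation for $W_1$ coming from the first step. Since $U_0 = 0$, the first step gives $(W_1,v) + \tau\, a(W_1,v) = (f_1,v)$, so $\|W_1\| \leq \|f_1\| \leq \|f\|_{L_\infty(L_2)}$ by testing with $v = W_1$ and using coercivity to drop the nonnegative term. Then testing the equation for $W_n$ with $v = W_n$ and summing gives an energy estimate of the form $\|W_N\|^2 \leq \|W_1\|^2 + C\sum \tau \|\bar\partial_t f_n\|\,\|W_n\|$, and a discrete Gronwall / absorption argument bounds $\max_n \|W_n\|$ by $\|f\|_{L_\infty(L_2)} + \|\dot f\|_{L_\infty(L_2)}$, where $\bar\partial_t f_n$ is controlled by $\|\dot f\|_{L_\infty(L_2)}$ via $\bar\partial_t f_n = \tau^{-1}\int_{t_{n-1}}^{t_n}\dot f\,ds$.

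For the homogeneous smoothing bounds \eqref{eq:femreg2} with $f = 0$, the cleanest route is the operator representation $U_n = (I+\tau\mathcal{A}_h)^{-n} U_0 = r(\tau\mathcal{A}_h)^n U_0$, so that $\bar\partial_t U_n = -\mathcal{A}_h r(\tau\mathcal{A}_h)^n U_0$ and $\bar\partial_t\bar\partial_t U_n = \mathcal{A}_h^2 r(\tau\mathcal{A}_h)^n U_0$ (up to the precise index bookkeeping). The estimates $\|\bar\partial_t U_n\| \leq C t_n^{-1}\|U_0\|$ and $\|\bar\partial_t\bar\partial_t U_n\| \leq C t_n^{-2}\|U_0\|$ then reduce to the scalar spectral bounds $\sup_{\lambda > 0} \lambda\,(1+\tau\lambda)^{-n} \leq C(n\tau)^{-1}$ and $\sup_{\lambda > 0}\lambda^2(1+\tau\lambda)^{-n} \leq C(n\tau)^{-2}$, which follow by elementary calculus (maximizing $x(1+x)^{-n}$ in $x = \tau\lambda$) and using $n\tau = t_n$. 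Alternatively, one can avoid the functional calculus and argue purely by discrete energy estimates, testing successively with $\mathcal{A}_h U_n$, which is the standard Thomée-style approach; I expect the \textbf{main obstacle} to be carrying out this energy-based smoothing argument cleanly, since it requires repeated summation-by-parts in the discrete time variable together with careful tracking of the weights $t_n$ to extract the sharp negative powers, and one must verify at each stage that coercivity in the energy norm (rather than any $H^2$-type bound) suffices — this is precisely where the independence from the variations of $A$ is secured, since only $\alpha$ enters.
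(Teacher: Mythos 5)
Your proposal is correct. For the inhomogeneous bound \eqref{eq:femreg1} it follows the paper's own argument essentially verbatim: discrete differentiation of \eqref{femh}, the observation that $U_0=0$ yields $\|\bar\partial_t U_1\|\le\|f_1\|$ (testing with $\bar\partial_t U_1$ and dropping the nonnegative form term), the backward Euler energy/stability estimate for $W_n=\bar\partial_t U_n$, and the bound $\sum_{j}\tau\|\bar\partial_t f_j\|\le C\|\dot f\|_{L_\infty(L_2)}$, where your integral representation $\bar\partial_t f_j=\tau^{-1}\int_{t_{j-1}}^{t_j}\dot f\,\mathrm{d}s$ is in fact a cleaner justification than the paper's pointwise maximum; your Gr\"onwall/absorption step could be replaced by the simpler telescoping inequality $\|W_n\|\le\|W_{n-1}\|+\tau\|\bar\partial_t f_n\|$, but it is not wrong. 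The genuine difference lies in the homogeneous smoothing bounds \eqref{eq:femreg2}: the paper offers no argument there, deferring to \cite[Theorem~7.3]{Thomee2006}, whereas you prove them via the spectral representation $U_n=r(\tau\mathcal{A}_h)^nU_0$ with $r(\lambda)=(1+\tau\lambda)^{-1}$, the identities $\bar\partial_tU_n=-\mathcal{A}_h r(\tau\mathcal{A}_h)^nU_0$ for $n\ge1$ and $\bar\partial_t\bar\partial_tU_n=\mathcal{A}_h^2 r(\tau\mathcal{A}_h)^nU_0$ for $n\ge2$ (using $r(\tau\mathcal{A}_h)-I=-\tau\mathcal{A}_h r(\tau\mathcal{A}_h)$), and the elementary scalar bounds $\sup_{x>0}x^k(1+x)^{-n}\le C_k n^{-k}$, valid for $n\ge k$. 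This is legitimate because $\mathcal{A}_h$ is self-adjoint and positive definite on the finite-dimensional space $V_h$, and it is the same mechanism used in the paper's Lemma~\ref{esthomogeneous_disc} and behind the cited theorem, so it is not a different route so much as a self-contained completion of the citation; what it buys is that the constant in \eqref{eq:femreg2} is seen to be absolute (independent even of $\alpha$ and $T$), and the restriction $n\ge k$ in the scalar bound explains precisely why the first estimate holds for $n\ge1$ while the second requires $n\ge2$.
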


\begin{proof}
From \eqref{femh} it follows for $n\geq 2$ that
\begin{align*}
(\bar\partial_t \bar\partial_tU_n,v) + a(\bar\partial_tU_n,v) = (\bar\partial_tf_n,v), \quad \forall v \in V_h,
\end{align*}
and the stability estimate for backward Euler schemes gives
\begin{align*}
\|\bar\partial_tU_n\| \leq \|\bar\partial_tU_1\| + \sum_{j=2}^n\tau\|\bar\partial_tf_j\|.
\end{align*}
From \eqref{femh} we have, since $U_0=0$, $\|\bar\partial_tU_1\| \leq \|f_1\|$. Finally, using the inequality
\begin{align*}
\sum_{j=2}^n\tau\|\bar\partial_tf_j\| \leq \sum_{j=2}^n\max_{t_{j-1}\leq \xi\leq t_j}\tau\|\dot{f}(\xi)\| \leq C\|\dot{f}\|_{L_\infty(L_2)},
\end{align*} 
we deduce \eqref{eq:femreg1}.

For the bound \eqref{eq:femreg2} we refer to \cite[Theorem 7.3]{Thomee2006}.
\end{proof}

\subsection{Orthogonal decomposition}
In this section we describe the orthogonal decomposition which defines the GFEM space denoted $\hat{V}$ in the discussion above. We refer to \cite{Malqvist2014} and \cite{Malqvist2013} for details. 

For the construction of the GFEM space we use the (weighted) Cl{\'e}ment interpolation operator introduced in \cite{Carstensen1999}, $\mathfrak{I}_{H}\colon V_h \rightarrow V_{H}$ defined by
\begin{align}\label{clement}
\mathfrak{I}_H v = \sum_{x\in \mathcal{N}}(\mathfrak{I}_H v)(x)\varphi_x, \quad \text{where} \quad (\mathfrak{I}_H v)(x) : = \frac{\int_{\Omega} v \varphi_x}{\int_{\Omega} \varphi_x}.
\end{align}
For this interpolation operator the following result is proved \cite{Carstensen1999}
\begin{align}\label{interp}
H^{-1}_K\|v-\mathfrak{I}_H v\|_{L_2(K)} + \|\nabla(v-\mathfrak{I}_H v)\|_{L_2(K)} \leq C \|\nabla v\|_{L_2(\bar\omega_K)}, \forall v \in V,
\end{align}
where $\bar\omega_K :=\cup \{\bar K \in \mathcal{T}_H: \bar K \cap K \neq \emptyset \}$ and $C$ depends on the shape regularity $\gamma$.

Let $V^\mathrm{f} = \{v \in V_h: \mathfrak{I}_H v = 0\}$ be the kernel of the Cl{\'e}ment interpolation operator \eqref{clement}. This space contains all fine scale features not resolved by $V_H$. The space $V_h$ can then be decomposed into $V_h = V_H \oplus V^\mathrm{f} $, where $v\in V_h$ can be written as a sum $v=v_H+v^\mathrm{f}$, with $v_H\in V_H$, $v^\mathrm{f} \in V^\mathrm{f}$, and $(v_H,v^\mathrm{f})=0$. 

Now define the orthogonal projection $R^\mathrm{f}\colon V_h \rightarrow V^\mathrm{f}$ by
\begin{align*}
a(R^\mathrm{f}v, w) = a( v,  w) \quad \forall w \in V^\mathrm{f}, \ v \in V_h.
\end{align*}
Using this projection we define the GFEM space, also referred to as the multiscale space,
\begin{align*}
V^{\mathrm{ms}} :=V_H-R^\mathrm{f}V_H,
\end{align*}
which leads to another orthogonal decomposition $V_h = V^{\mathrm{ms}} \oplus V^\mathrm{f}$. Hence any function $v\in V_h$ has a unique decomposition $v=v^\mathrm{ms} + v^\mathrm{f}$, with $v^\mathrm{ms}\in V^\mathrm{ms}$ and $v^\mathrm{f} \in V^\mathrm{f}$, with $a(v^\mathrm{ms},v^\mathrm{f})=0$. 

To define a basis for $V^\mathrm{ms}$ we need to find the projection $R^\mathrm{f}$ of the nodal basis function $\varphi_x\in V_H$. Let this projection be denoted $\phi_x$, so that $\phi_x\in V^\mathrm{f}$ satisfies the (global) corrector problem
\begin{align}\label{corrector}
a(\phi_x,w) = a(\varphi_x,w), \quad \forall w\in V^\mathrm{f}.
\end{align}
A basis for the multiscale space $V^\mathrm{ms}$ is thus given by
\begin{align*}
\{\varphi_x-\phi_x\colon  x\in \mathcal{N}\}.
\end{align*}

We also introduce the projection $R^\mathrm{ms}\colon V_h\rightarrow V^\mathrm{ms}$, defined by
\begin{align}\label{proj}
a(R^\mathrm{ms} v, w) = a(v,w), \quad \forall w \in V^{\mathrm{ms}}, \ v \in V_h.
\end{align}
Note that $R^\mathrm{ms} = I-R^\mathrm{f}$. For $R^\mathrm{ms}$ we have the following lemma, based on the results in \cite{Malqvist2014}.

\begin{mylemma}\label{projconv}
For the projection $R^\mathrm{ms}$ in \eqref{proj} and $v\in V_h$ we have the error bound
\begin{align}
\|v-R^\mathrm{ms} v\| &\leq CH^2\|\mathcal{A}_h v\|, \quad v \in V_h,\label{eq:projconv2}
\end{align}
where $C$ depends on $\alpha$ and $\gamma$, but not on the variations of $A$.
\end{mylemma}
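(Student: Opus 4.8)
The plan is to observe that the error $v - R^{\mathrm{ms}} v$ lies entirely in the fine-scale space and then to run a short energy argument. Since $R^{\mathrm{ms}} = I - R^{\mathrm{f}}$, we have $v - R^{\mathrm{ms}} v = R^{\mathrm{f}} v =: e \in V^{\mathrm{f}}$, so in particular $\mathfrak{I}_H e = 0$. This is the crucial structural fact: the whole error is captured by the kernel of the Cl\'ement interpolant, which is precisely the object on which the interpolation estimate \eqref{interp} is sharp.

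First, I would derive an energy estimate for $e$. Testing the defining relation $a(R^{\mathrm{f}} v, w) = a(v, w)$ with $w = e \in V^{\mathrm{f}}$ gives $\enorm{e}^2 = a(e,e) = a(v,e)$, and by the definition \eqref{A_h} of $\mathcal{A}_h$ (legitimate since $v, e \in V_h$) this equals $(\mathcal{A}_h v, e)$. Cauchy--Schwarz then yields $\enorm{e}^2 \leq \|\mathcal{A}_h v\|\,\|e\|$.

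Second, I would convert the $L_2$ norm of $e$ into its gradient by using that $\mathfrak{I}_H e = 0$. Writing $e = e - \mathfrak{I}_H e$ and applying \eqref{interp} elementwise, then summing over $K \in \mathcal{T}_H$ and invoking that the patches $\bar\omega_K$ have bounded overlap, gives $\|e\| \leq C H \|\nabla e\|$. Coercivity (A1), in the form $\alpha \|\nabla e\|^2 \leq a(e,e) = \enorm{e}^2$, upgrades this to $\|e\| \leq C \alpha^{-1/2} H \enorm{e}$.

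Finally, combining the two estimates gives $\enorm{e}^2 \leq \|\mathcal{A}_h v\|\cdot C \alpha^{-1/2} H \enorm{e}$, hence $\enorm{e} \leq C \alpha^{-1/2} H \|\mathcal{A}_h v\|$; substituting this back into the $L_2$ bound $\|e\| \leq C \alpha^{-1/2} H \enorm{e}$ yields $\|e\| \leq C \alpha^{-1} H^2 \|\mathcal{A}_h v\|$, which is \eqref{eq:projconv2}. The main point to be careful about --- and the only place where the constant acquires its dependence on $\gamma$ --- is the passage from the local Cl\'ement estimates to the global bound $\|e\| \leq C H \|\nabla e\|$: both the interpolation constant in \eqref{interp} and the finite-overlap count of $\{\bar\omega_K\}$ depend only on the shape regularity $\gamma$, so the summed local contributions are controlled by a mesh-independent multiple of $\|\nabla e\|^2$. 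Everything else is bookkeeping with the single norm inequality $\alpha\|\nabla\cdot\|^2 \leq \enorm{\cdot}^2$; note that only coercivity (the lower bound $\alpha$), and not the upper bound $\beta$, enters, consistent with the stated dependence of $C$ on $\alpha$ and $\gamma$ alone.
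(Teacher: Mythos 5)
Your proof is correct, and it takes a genuinely different route from the paper. The paper proves \eqref{eq:projconv2} by a duality (Aubin--Nitsche) argument: it introduces the auxiliary problem $a(z,w)=(v-R^\mathrm{ms}v,w)$ for all $w\in V_h$, uses Galerkin orthogonality to write $\|v-R^\mathrm{ms}v\|^2=a(z-R^\mathrm{ms}z,v-R^\mathrm{ms}v)$, and then invokes Lemma~3.1 of \cite{Malqvist2014} as a black box to bound both energy-norm factors by $CH\|v-R^\mathrm{ms}v\|$ and $CH\|\mathcal{A}_hv\|$, respectively. You instead exploit the structural identity $v-R^\mathrm{ms}v=R^\mathrm{f}v\in V^\mathrm{f}=\ker\mathfrak{I}_H$, which lets you apply the Cl\'ement estimate \eqref{interp} directly to the error itself, giving the Poincar\'e-type bound $\|e\|\leq CH\|\nabla e\|\leq C\alpha^{-1/2}H\enorm{e}$; combined with the elementary energy identity $\enorm{e}^2=(\mathcal{A}_hv,e)$, using the Poincar\'e bound twice yields the $H^2$ rate. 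What your argument buys is self-containedness (only \eqref{interp}, \eqref{A_h}, and coercivity are needed, with the transparent constant $C\alpha^{-1}$ and no external lemma) and no dual problem. What the paper's duality scheme buys is robustness to localization: in Lemma~\ref{lodprojconv} the same proof is repeated verbatim for $R^\mathrm{ms}_k$, and there your structural fact fails --- $V^\mathrm{ms}_k=(I-R^\mathrm{f}_k)V_H$ is \emph{not} $a$-orthogonal to $V^\mathrm{f}$, so $v-R^\mathrm{ms}_kv$ does not lie in $\ker\mathfrak{I}_H$ and the Cl\'ement estimate cannot be applied to the error directly. So your shortcut is valid for the ideal projection $R^\mathrm{ms}$ of this lemma, but it would not extend to the localized projection, which is presumably why the authors set up the duality machinery here.
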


\begin{proof}
Define the following elliptic auxiliary problem: find $z \in V_h$ such that
\begin{align*}
a(z,w) = (v - R^\mathrm{ms} v,w), \quad \forall w \in V_h.
\end{align*}
In \cite[Lemma~3.1]{Malqvist2014} it was proven that the solution to an elliptic equation of the form
\begin{align*}
a(u,w)=(g,w), \quad \forall w \in V_h,
\end{align*}
satisfies the error estimate
\begin{align*}
\enorm{u-R^\mathrm{ms} u} \leq CH\|g\|,
\end{align*}
where $C$ depends on $\gamma$ and $\alpha$, but not on the variations of $A$. Hence, we have the following bound for $z$,
\begin{align*}
\enorm{z-R^\mathrm{ms} z} \leq CH\|v-R^\mathrm{ms} v\|.
\end{align*}
Furthermore, we note that $v-R^\mathrm{ms} v \in V_h$ and
\begin{align*}
\|v-R^\mathrm{ms} v\|^2 &= (v - R^\mathrm{ms} v,v - R^\mathrm{ms} v) = a(z,v - R^\mathrm{ms} v) \\&= a(z - R^\mathrm{ms} z,v - R^\mathrm{ms} v)\leq \enorm{z-R^\mathrm{ms} z}\;\enorm{v - R^\mathrm{ms} v}.
\end{align*}
Now, since $a(v,w)=(\mathcal{A}_hv,w)$, we get  $\enorm{v-R^\mathrm{ms} v}\leq CH\|\mathcal{A}_h v\|$ and \eqref{eq:projconv2} follows.
\end{proof}

In particular, if $U_n$ is the solution to \eqref{femh}, then \eqref{eq:projconv2} gives 
\begin{align*}
\|U_n-R^\mathrm{ms} U_n\| &\leq CH^2\|P_hf_n-\bar\partial_tU_n\|, \quad n\geq 1,\\ 
\|\bar\partial_tU_n-R^\mathrm{ms}\bar\partial_t U_n\| &\leq CH^2\|P_h\bar\partial_t f_n-\bar\partial_t\bar\partial_t U_n\|, \quad n\geq 2.
\end{align*}

The result in Lemma~\ref{projconv} should be compared with the error of the classical Ritz projection $R_h:V\rightarrow V_h$ defined by $a(R_hv,w)=a(v,w)$, $\forall w \in V_h$. Using elliptic regularity estimates, one achieves 
\begin{align*}
\|R_hv-v\|\leq Ch^2\|D^2v\|\leq Ch^2\|\mathcal{A}v\|,
\end{align*}
which is similar to the result in Lemma~\ref{projconv}. However, in this case, $C$ depends on the variations of $A$, as we noted in the discussion in the beginning of this section. This is avoided using the $R^\mathrm{ms}$-projection, since the constant in Lemma~\ref{projconv} does not depend on the variations of $A$. 

Now let $P^\mathrm{ms}$ denote the $L_2$-projection onto $V^{\mathrm{ms}}$ and define the corresponding GFEM to problem \eqref{femh}; find $U^{\mathrm{ms}}_n \in V^{\mathrm{ms}}$ such that $U^{\mathrm{ms}}_0 = P^\mathrm{ms} U_{0}$ and
\begin{align}\label{gfem}
(\bar\partial_t U^{\mathrm{ms}}_n,v) + a(U^{\mathrm{ms}}_n,v) = (f_n,v), \quad \forall v \in V^{\mathrm{ms}},
\end{align}
for $n=1,...,N$. Furthermore, we define the operator $\mathcal{A}^\mathrm{ms}: V^\mathrm{ms} \rightarrow V^\mathrm{ms}$ by
\begin{align}\label{A^ms}
(\mathcal{A}^\mathrm{ms} v,w) = a(v,w), \quad \forall v, w \in V^\mathrm{ms}.
\end{align}

\subsection{Localization}\label{subsec:localization}
Since the corrector problems \eqref{corrector} are posed in the fine scale space $V^\mathrm{f}$ they are computationally expensive to solve. Moreover, the correctors $\phi_x$ generally have global support, which destroys the sparsity of the resulting linear system \eqref{gfem}. However, as shown in \cite{Malqvist2014}, $\phi_x$ decays exponentially fast away from $x$. This observation motivates a localization of the corrector problems to smaller patches of coarse elements. Here we use a variant presented in \cite{Henning2014}, which reduces the required size of the patches. 

We first define the notion of patches and their sizes. For all $K \in \mathcal{T}_H$ we define $\omega_k(K)$ to be the patch of size $k$, where
\begin{align*}
&\omega_0(K) := K,\\
&\omega_k(K) := \cup \{\bar{K}\in \mathcal{T}_H: \bar{K}\cap \omega_{k-1}(K) \neq \emptyset\}, \quad k=1,2,...
\end{align*}
Moreover, we define $V^\mathrm{f}(\omega_k(K)) := \{w \in V^\mathrm{f}: \supp(w)\subset \omega_k(K)\}$.

Now define the operator $R^\mathrm{f}_K\colon V_h \rightarrow V^\mathrm{f}$ by
\begin{align*}
\int_\Omega A\nabla R^\mathrm{f}_Kv\cdot \nabla w = \int_K A\nabla v \cdot\nabla w, \quad \forall v \in V_h,\ w \in V^\mathrm{f},
\end{align*}
and note that $R^\mathrm{f} := \sum_{K \in \mathcal{T}_H}R^\mathrm{f}_K$. We now localize the operator $R^\mathrm{f}_K$ by defining $R_{K,k}^\mathrm{f}\colon V_h \rightarrow V^\mathrm{f}(\omega_k(K))$ through
\begin{align*}
\int_{\omega_k(K)}A\nabla R_{K,k}^\mathrm{f}v\cdot \nabla w = \int_K A\nabla v \cdot \nabla w, \quad \forall v \in V_h,\ w \in V^\mathrm{f}(\omega_k(K)),
\end{align*}
and we define $R_{k}^\mathrm{f} := \sum_{K \in \mathcal{T}_H}R_{K,k}^\mathrm{f}$. Hence we can, for each nonnegative integer $k$, define a localized multiscale space
\begin{align*}
V^\mathrm{ms}_k:=V_H - R^\mathrm{f}_kV_H.
\end{align*} 
Here the basis is given by $\{\varphi_x-\phi_{k,x}: x \in \mathcal{N}\}$, where $\phi_{k,x} = R_{k}^\mathrm{f}\varphi_x$ is the localized version of $\phi_x$. The procedure of decomposing $V_h$ into the orthogonal spaces $V^\mathrm{ms}$ and $V^\mathrm{f}$ together with the localization of $V^\mathrm{ms}$ to $V^\mathrm{ms}_k$ is referred to as \emph{local orthogonal decomposition}.

The following lemma follows from Lemma~3.6 in \cite{Henning2014}.  
\begin{mylemma}\label{correxp}
There exists a constant $0 < \mu < 1$ that depends on the contrast $\beta/\alpha$ such that
\begin{align*}
\enorm{R^\mathrm{f}v-R^\mathrm{f}_kv} \leq Ck^{d/2}\mu^{k}\enorm{ v }, \quad \forall v \in V_h,
\end{align*}
where $C$ depends on $\beta$, $\alpha$, and $\gamma$, but not on the variations of $A$.
\end{mylemma}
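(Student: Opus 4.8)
The plan is to follow the standard exponential-decay argument for local orthogonal decomposition, as in \cite[Lemma~3.6]{Henning2014}, exploiting linearity to reduce the global statement to an element-wise one. Since $R^\mathrm{f} = \sum_{K\in\mathcal{T}_H} R^\mathrm{f}_K$ and $R^\mathrm{f}_k = \sum_{K\in\mathcal{T}_H} R^\mathrm{f}_{K,k}$, I would first establish, for a single element $K$, a bound of the form
\begin{align*}
\enorm{R^\mathrm{f}_K v - R^\mathrm{f}_{K,k} v} \le C\,\tilde\mu^{\,k}\,\|A^{1/2}\nabla v\|_{L_2(K)},
\end{align*}
with $0<\tilde\mu<1$, and then assemble the global estimate by summing over $K$. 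The assembly step is what produces the polynomial prefactor $k^{d/2}$: because each patch $\omega_k(K)$ overlaps only $\mathcal{O}(k^d)$ other patches, testing the global difference with itself, splitting the form over $K$, and applying Cauchy--Schwarz together with the finite-overlap counting converts the sum of squared local contributions $\sum_K \|A^{1/2}\nabla v\|_{L_2(K)}^2 = \enorm{v}^2$ into the bound $Ck^{d/2}\mu^k\enorm{v}$, with $\mu$ slightly larger than $\tilde\mu$ but still strictly below one.

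The heart of the matter is the single-element decay. The idea is to show that the \emph{global} corrector $R^\mathrm{f}_K v$ has energy decaying geometrically across the annuli $\omega_{k}(K)\setminus\omega_{k-1}(K)$, and that the localized corrector $R^\mathrm{f}_{K,k} v$ is, by Galerkin orthogonality, the $a(\cdot,\cdot)$-best approximation of $R^\mathrm{f}_K v$ in $V^\mathrm{f}(\omega_k(K))$. Concretely, I would introduce a Lipschitz cutoff $\eta$ with $\eta = 0$ on $\omega_{k-1}(K)$, $\eta = 1$ outside $\omega_k(K)$, and $\|\nabla\eta\|_{L_\infty}\le CH^{-1}$, and test the corrector equation with a suitable modification of $\eta\,R^\mathrm{f}_K v$. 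Since $R^\mathrm{f}_K v$ solves an elliptic problem whose right-hand side is supported on $K$, for indices with $K\subset\omega_{k-1}(K)$ the source vanishes against the cutoff, leaving an inequality that bounds the energy of $R^\mathrm{f}_K v$ on the far region by its energy on a single adjacent annulus. Iterating this one-step estimate $k$ times yields the geometric factor $\tilde\mu^k$, where $\tilde\mu = (\theta/(1+\theta))^{1/2}$ (or similar) and $\theta$ depends only on $\beta/\alpha$ and $\gamma$; keeping everything in the energy norm is precisely what makes the rate depend on the contrast rather than on the variations of $A$.

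The main obstacle, and the step needing care, is that $V^\mathrm{f}$ is the \emph{kernel} of the Cl{\'e}ment operator $\mathfrak{I}_H$, so neither $\eta\,R^\mathrm{f}_K v$ nor $(1-\eta)R^\mathrm{f}_K v$ is an admissible element of $V^\mathrm{f}$. To repair this I would subtract the interpolant, testing with $\eta\,R^\mathrm{f}_K v - \mathfrak{I}_H(\eta\,R^\mathrm{f}_K v)$, and control the correction through the local approximation and stability properties \eqref{interp} of $\mathfrak{I}_H$ combined with the bound on $\nabla\eta$; because $\eta$ varies across only one layer of coarse elements, these interpolation errors are themselves localized to a thin annulus and are absorbed into the decay iteration. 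Finally, to pass from the global to the localized corrector I would use the best-approximation property of $R^\mathrm{f}_{K,k} v$ in $V^\mathrm{f}(\omega_k(K))$ and compare against the admissible competitor obtained by correcting $(1-\eta)R^\mathrm{f}_K v\in V^\mathrm{f}(\omega_k(K))$, so that $\enorm{R^\mathrm{f}_K v - R^\mathrm{f}_{K,k} v}$ is dominated by the tail energy of $R^\mathrm{f}_K v$ outside $\omega_{k-1}(K)$, which the decay estimate has already shown to be exponentially small.
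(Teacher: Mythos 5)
Your proposal is correct and coincides with the paper's approach: the paper offers no independent proof of this lemma, but simply invokes Lemma~3.6 of \cite{Henning2014}, and your sketch is a faithful reconstruction of exactly that argument --- element-wise correctors with source supported on $K$, cutoff functions made admissible in $V^\mathrm{f}$ by subtracting the Cl\'ement interpolant, Galerkin best approximation in $V^\mathrm{f}(\omega_k(K))$, and finite-overlap counting to produce the $k^{d/2}$ prefactor. You also correctly identify the two delicate points (the kernel constraint $\mathfrak{I}_H w = 0$ and the fact that a naive summation over elements would give $H^{-d/2}$ rather than $k^{d/2}$), both of which are handled in the cited proof precisely as you describe.
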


Now let $R^\mathrm{ms}_k\colon V_h \rightarrow V^\mathrm{ms}_k$ be the orthogonal projection defined by
\begin{align} \label{lodproj}
a(R^\mathrm{ms}_k v, w) = a(v, w), \quad \forall w \in V^\mathrm{ms}_k.
\end{align}
Next lemma is a consequence of Theorem~3.7 in \cite{Henning2014} and estimates the error due to the localization procedure. 

\begin{mylemma}\label{lodprojconv}
For the projection $R^\mathrm{ms}_k$ in \eqref{lodproj} we have the bound
\begin{align}\label{eq:lodprojconv} 
\| v -R^\mathrm{ms}_k v\| &\leq C(H+k^{d/2}\mu^{k})^2\|\mathcal{A}_h v\|, \quad \forall v \in V_h.
\end{align}
Here $C$ depends on $\beta$, $\alpha$, and $\gamma$, but not on the variations of $A$.
\end{mylemma}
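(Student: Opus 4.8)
The plan is to reproduce the duality (Aubin--Nitsche) argument used in the proof of Lemma~\ref{projconv}, but with the ideal projection $R^\mathrm{ms}$ replaced by the localized one $R^\mathrm{ms}_k$, so that the only genuinely new ingredient is the exponential-decay estimate of Lemma~\ref{correxp}. Concretely I would split the proof into two steps: first establish the energy-norm bound
$\enorm{v - R^\mathrm{ms}_k v}\le C(H + k^{d/2}\mu^{k})\|\mathcal{A}_h v\|$,
and then promote it to the claimed $L_2$ estimate by duality.

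For the energy estimate, since $R^\mathrm{ms}_k v$ is the $a$-orthogonal (Galerkin) projection onto $V^\mathrm{ms}_k$ by \eqref{lodproj}, it is the energy-best approximation and hence $\enorm{v - R^\mathrm{ms}_k v}\le\enorm{v - w}$ for every $w\in V^\mathrm{ms}_k$. I would test with $w = v_H - R^\mathrm{f}_k v_H$, where $v_H\in V_H$ is the unique coarse function with $v_H - R^\mathrm{f}v_H = R^\mathrm{ms}v$; applying $\mathfrak{I}_H$ to this identity and using $V^\mathrm{f}=\ker\mathfrak{I}_H$ together with $\mathfrak{I}_H|_{V_H}=\mathrm{id}$ identifies $v_H=\mathfrak{I}_H v$. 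Then $R^\mathrm{ms}v - w = (R^\mathrm{f}_k - R^\mathrm{f})v_H$, so the triangle inequality gives $\enorm{v - w}\le\enorm{v - R^\mathrm{ms}v} + \enorm{(R^\mathrm{f} - R^\mathrm{f}_k)v_H}$. The first term is $\le CH\|\mathcal{A}_h v\|$ by the elliptic estimate from \cite[Lemma~3.1]{Malqvist2014} recalled inside the proof of Lemma~\ref{projconv} (applied with right-hand side $\mathcal{A}_h v$, since $a(v,w)=(\mathcal{A}_h v,w)$ by \eqref{A_h}), and Lemma~\ref{correxp} bounds the second by $Ck^{d/2}\mu^{k}\enorm{v_H}$. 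The one point needing care is converting $\enorm{v_H}$ into $\|\mathcal{A}_h v\|$: Clément stability (summing \eqref{interp} over elements with finite overlap) yields $\enorm{\mathfrak{I}_H v}\le C(\beta/\alpha)^{1/2}\enorm{v}$, and a Poincar\'e estimate turns $\enorm{v}^2 = (\mathcal{A}_h v, v)\le\|\mathcal{A}_h v\|\,\|v\|$ into $\enorm{v}\le C\|\mathcal{A}_h v\|$, with all constants depending only on $\alpha,\beta,\gamma$. This closes the energy estimate.

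For the second step I would repeat the duality argument of Lemma~\ref{projconv} with $R^\mathrm{ms}$ replaced by $R^\mathrm{ms}_k$: let $z\in V_h$ solve $a(z,w) = (v - R^\mathrm{ms}_k v, w)$ for all $w\in V_h$, so that $\mathcal{A}_h z = v - R^\mathrm{ms}_k v$. The Galerkin orthogonality of $R^\mathrm{ms}_k$ annihilates the term $a(R^\mathrm{ms}_k z, v - R^\mathrm{ms}_k v)$, leaving $\|v - R^\mathrm{ms}_k v\|^2 = a(z - R^\mathrm{ms}_k z, v - R^\mathrm{ms}_k v)\le\enorm{z - R^\mathrm{ms}_k z}\,\enorm{v - R^\mathrm{ms}_k v}$. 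Applying the Step~1 estimate to both factors (for $z$ with right-hand side $\mathcal{A}_h z = v - R^\mathrm{ms}_k v$) gives $\|v - R^\mathrm{ms}_k v\|^2\le C(H + k^{d/2}\mu^{k})^2\|v - R^\mathrm{ms}_k v\|\,\|\mathcal{A}_h v\|$, and dividing by $\|v - R^\mathrm{ms}_k v\|$ yields \eqref{eq:lodprojconv}.

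The main obstacle is Step~1, and within it the honest bookkeeping of the localization term: one must recognise $v_H = \mathfrak{I}_H v$, use Lemma~\ref{correxp} to extract the exponentially small factor $k^{d/2}\mu^{k}$, and then pass from $\enorm{v}$ to $\|\mathcal{A}_h v\|$ while keeping every constant independent of the variations of $A$ (only the contrast $\beta/\alpha$ may enter). Once this energy estimate is secured, the duality step is routine, being structurally identical to the already-proven Lemma~\ref{projconv}.
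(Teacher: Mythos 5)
Your Step 2 is exactly the paper's proof: the paper defines the same dual solution $z\in V_h$ with $a(z,w)=(v-R^\mathrm{ms}_k v,w)$, uses the Galerkin orthogonality of $R^\mathrm{ms}_k$ to get $\|v-R^\mathrm{ms}_k v\|^2\leq \enorm{z-R^\mathrm{ms}_k z}\,\enorm{v-R^\mathrm{ms}_k v}$, and then bounds both energy factors. Where you genuinely diverge is Step 1: the paper does not prove the energy estimate at all, but imports it directly from Theorem~3.7 in \cite{Henning2014}, whereas you re-derive it from the best-approximation property of $R^\mathrm{ms}_k$, the ideal estimate of \cite[Lemma~3.1]{Malqvist2014}, and the decay bound of Lemma~\ref{correxp}. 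This makes your proof self-contained (your derivation is essentially the proof of the cited theorem), at the cost of redoing work the paper delegates to a reference.

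There is, however, one incorrect claim inside your Step 1. The weighted Cl\'ement operator \eqref{clement} is \emph{not} idempotent: $\mathfrak{I}_H|_{V_H}\neq \mathrm{id}$ (for instance $\mathfrak{I}_H\varphi_y\neq\varphi_y$, since $(\mathfrak{I}_H\varphi_y)(x)=\int_\Omega\varphi_y\varphi_x/\int_\Omega\varphi_x>0$ at nodes $x$ adjacent to $y$). Hence from $\mathfrak{I}_H v_H=\mathfrak{I}_H v$ you may only conclude $v_H=(\mathfrak{I}_H|_{V_H})^{-1}\mathfrak{I}_H v$, not $v_H=\mathfrak{I}_H v$; with the test function $w=\mathfrak{I}_H v-R^\mathrm{f}_k\mathfrak{I}_H v$ your identity $R^\mathrm{ms}v-w=(R^\mathrm{f}_k-R^\mathrm{f})v_H$ breaks down, and the subsequent Cl\'ement-stability bound is applied to the wrong function. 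The fix stays entirely within the paper's setup: since $\mathfrak{I}_H v=0$ if and only if $\int_\Omega v\varphi_x=0$ for all $x\in\mathcal{N}$, the kernel $V^\mathrm{f}$ is the $L_2$-orthogonal complement of $V_H$ in $V_h$, so applying the $L_2$-projection $P_H$ to $v_H-R^\mathrm{f}v_H=R^\mathrm{ms}v=v-R^\mathrm{f}v$ identifies $v_H=P_H v$. The needed stability $\enorm{P_H v}\leq C(\beta/\alpha)^{1/2}\enorm{v}$ then follows from the $H^1$-stability of $P_H$, which the paper assumes explicitly for the mesh $\mathcal{T}_H$. With $P_H v$ in place of $\mathfrak{I}_H v$ throughout Step 1, your argument is correct and all constants depend only on $\alpha$, $\beta$, and $\gamma$, as required.
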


\begin{proof}
The proof is similar to the proof of Lemma~\ref{projconv}. 
Let $z\in V_h$ be the solution to the elliptic dual problem
\begin{align*}
a(z,w)=(v - R^\mathrm{ms}_k v,w), \quad \forall w \in V_h,
\end{align*}
which gives
\begin{align*}
\|v-R^\mathrm{ms}_k v\|^2 &= (v-R^\mathrm{ms}_k v, v-R^\mathrm{ms}_k v) = a(z-R^\mathrm{ms}_k z,v-R^\mathrm{ms}_k v)\\ &\leq \enorm{z-R^\mathrm{ms}_k z} \enorm{v-R^\mathrm{ms}_kv}.
\end{align*}
It follows from Theorem~3.7 in \cite{Henning2014} that there exists a constant $C$ depending on $\beta$, $\alpha$, and $\gamma$, such that $\enorm{z-R^\mathrm{ms}_k z}\leq C(H+k^{d/2}\mu^k)\|v-R^\mathrm{ms}_k v\|$, with $\mu$ as in Lemma~\ref{correxp}. Since $(\mathcal{A}_hv,w)=a(v,w)$ we get  $\enorm{v-R^\mathrm{ms}_kv}\leq CH\|\mathcal{A}_hv\|$ and \eqref{eq:lodprojconv} follows.
\end{proof}

We are now ready to formulate the localized version of \eqref{gfem} by replacing $V^\mathrm{ms}$ by $V^\mathrm{ms}_k$. The localized GFEM formulation reads; find $U^{\mathrm{ms}}_{k,n} \in V^{\mathrm{ms}}_k$ such that $U^{\mathrm{ms}}_{k,0} = P^\mathrm{ms}_k U_{0}$ and
\begin{align}\label{gfemlod}
(\bar\partial_t U^{\mathrm{ms}}_{k,n},v) + a(U^{\mathrm{ms}}_{k,n},v) = (f_n,v), \quad \forall v \in V^{\mathrm{ms}}_k,
\end{align}
for $n=1,...,N$, where $P^\mathrm{ms}_k$ is the $L_2$-projection onto $V^\mathrm{ms}_k$. We also define the operator $\mathcal{A}^\mathrm{ms}_k: V^\mathrm{ms}_k \rightarrow V^\mathrm{ms}_k$, a localized version of \eqref{A^ms}, by
\begin{align}\label{A^ms_k}
(\mathcal{A}^\mathrm{ms}_k v,w) = a(v,w), \quad \forall v, w \in V^\mathrm{ms}_k.
\end{align}

Next lemma states some important properties of the operators $\mathcal{A}_h$, $\mathcal{A}^\mathrm{ms}$, and $\mathcal{A}^\mathrm{ms}_k$.
\begin{mylemma}\label{operatorsA}
The operators $\mathcal{A}_h$, $\mathcal{A}^\mathrm{ms}$, and $\mathcal{A}^\mathrm{ms}_k$ are self-adjoint and positive definite. Furthermore, the following bound hold
\begin{align*}
\|(\mathcal{A}^\mathrm{ms}_k)^{-1/2}P^\mathrm{ms}_kf\| \leq C \|f\|_{H^{-1}}, \quad \forall f \in L_2,
\end{align*}
where $C$ depends on $\alpha$.
\end{mylemma}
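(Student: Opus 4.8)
The plan is to establish the three claims in order: self-adjointness, positive definiteness, and the $H^{-1}$-bound, handling all three operators $\mathcal{A}_h$, $\mathcal{A}^\mathrm{ms}$, and $\mathcal{A}^\mathrm{ms}_k$ uniformly by exploiting that each is defined on a finite-dimensional subspace $W\subset V_h$ (namely $V_h$, $V^\mathrm{ms}$, or $V^\mathrm{ms}_k$) via $(\mathcal{A}_W v,w)=a(v,w)$ for all $v,w\in W$. First I would observe that self-adjointness with respect to the $L_2$-inner product is immediate from the symmetry of the bilinear form: by assumption (A1), $A$ is symmetric, so $a(v,w)=a(w,v)$, whence $(\mathcal{A}_W v,w)=a(v,w)=a(w,v)=(\mathcal{A}_W w,v)=(v,\mathcal{A}_W w)$ for all $v,w\in W$.

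For positive definiteness I would use coercivity of $a$: for $0\neq v\in W$, we have $(\mathcal{A}_W v,v)=a(v,v)=\enorm{v}^2\geq \alpha\|\nabla v\|^2>0$ by (A1) and the Poincar\'e inequality (which guarantees $\|\nabla v\|>0$ for nonzero $v\in V$), so $\mathcal{A}_W$ is positive definite. Since $\mathcal{A}_W$ is a self-adjoint positive definite operator on a finite-dimensional space, its square root $\mathcal{A}_W^{1/2}$ and the inverses $\mathcal{A}_W^{-1}$, $\mathcal{A}_W^{-1/2}$ are well-defined, self-adjoint, and positive definite; this justifies the notation in the final bound.

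For the remaining estimate, specializing to $W=V^\mathrm{ms}_k$ and $\mathcal{A}_W=\mathcal{A}^\mathrm{ms}_k$, the key identity is that for any $g\in W$,
\begin{align*}
\|(\mathcal{A}^\mathrm{ms}_k)^{-1/2}g\|^2=((\mathcal{A}^\mathrm{ms}_k)^{-1}g,g)=(\psi,g),
\end{align*}
where $\psi:=(\mathcal{A}^\mathrm{ms}_k)^{-1}g\in W$ solves $a(\psi,w)=(g,w)$ for all $w\in W$. Taking $w=\psi$ gives $a(\psi,\psi)=(g,\psi)$, and by coercivity $\alpha\|\nabla\psi\|^2\le a(\psi,\psi)=(g,\psi)$. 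Applying this with $g=P^\mathrm{ms}_k f$ and using that $(P^\mathrm{ms}_k f,\psi)=(f,\psi)$ since $\psi\in V^\mathrm{ms}_k$, I would bound $(f,\psi)\le\|f\|_{H^{-1}}\|\psi\|_{H^1}$ by the definition of the dual norm. Combining these yields $\|(\mathcal{A}^\mathrm{ms}_k)^{-1/2}P^\mathrm{ms}_k f\|^2=(P^\mathrm{ms}_kf,\psi)=(f,\psi)\le\|f\|_{H^{-1}}\|\psi\|_{H^1}$, so it remains to control $\|\psi\|_{H^1}$ by $\|(\mathcal{A}^\mathrm{ms}_k)^{-1/2}P^\mathrm{ms}_k f\|$.

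The main obstacle, and the step requiring care, is this last control of $\|\psi\|_{H^1}$. Using Poincar\'e's inequality, $\|\psi\|_{H^1}\le C\|\nabla\psi\|$, and from the coercivity estimate above $\alpha\|\nabla\psi\|^2\le a(\psi,\psi)=\enorm{\psi}^2$. I would relate $\enorm{\psi}^2=a(\psi,\psi)=(g,\psi)$ back to the target norm: indeed $\enorm{\psi}^2=a(\psi,\psi)=((\mathcal{A}^\mathrm{ms}_k)^{1/2}\psi,(\mathcal{A}^\mathrm{ms}_k)^{1/2}\psi)$, and since $(\mathcal{A}^\mathrm{ms}_k)^{1/2}\psi=(\mathcal{A}^\mathrm{ms}_k)^{-1/2}g$, this equals $\|(\mathcal{A}^\mathrm{ms}_k)^{-1/2}g\|^2$. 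Hence $\|\nabla\psi\|\le\alpha^{-1/2}\|(\mathcal{A}^\mathrm{ms}_k)^{-1/2}P^\mathrm{ms}_kf\|$, giving $\|\psi\|_{H^1}\le C\alpha^{-1/2}\|(\mathcal{A}^\mathrm{ms}_k)^{-1/2}P^\mathrm{ms}_kf\|$. Substituting into the bound $\|(\mathcal{A}^\mathrm{ms}_k)^{-1/2}P^\mathrm{ms}_kf\|^2\le\|f\|_{H^{-1}}\|\psi\|_{H^1}$ and cancelling one factor of $\|(\mathcal{A}^\mathrm{ms}_k)^{-1/2}P^\mathrm{ms}_kf\|$ yields the claimed inequality with $C$ depending only on $\alpha$ (through $\alpha^{-1/2}$) and the Poincar\'e constant. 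I would note that the argument is independent of the variations of $A$ since only the coercivity constant $\alpha$ enters, and that the same reasoning applies verbatim to $\mathcal{A}_h$ and $\mathcal{A}^\mathrm{ms}$ by replacing $V^\mathrm{ms}_k$ with the appropriate subspace.
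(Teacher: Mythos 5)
Your proposal is correct in all its steps: self-adjointness from the symmetry of $A$, positive definiteness from coercivity plus the Poincar\'e inequality on $H^1_0$, and the bound via the auxiliary function $\psi=(\mathcal{A}^\mathrm{ms}_k)^{-1}P^\mathrm{ms}_kf$, the identity $\|(\mathcal{A}^\mathrm{ms}_k)^{-1/2}P^\mathrm{ms}_kf\|^2=a(\psi,\psi)=(f,\psi)$, duality, and cancellation of one factor. The only point of comparison worth making is that the paper does not actually write out any of this: its proof consists of the remark that self-adjointness and positive definiteness follow from (A1), and a citation to the Larsson--Thom\'ee reference for the $H^{-1}$-bound. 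What you have produced is precisely the standard argument that the citation points to (it is the discrete analogue of the smoothing estimate for $T_h=\mathcal{A}_h^{-1}P_h$ in that book), so your route is not genuinely different in substance --- but it has the merit of being self-contained, of making explicit that only the coercivity constant $\alpha$ and the Poincar\'e constant of $\Omega$ enter (which is what justifies the paper's claim that $C$ is independent of the variations of $A$), and of showing that the identical argument covers $\mathcal{A}_h$ and $\mathcal{A}^\mathrm{ms}$ as well. One cosmetic caution: the cancellation step should formally note that the inequality is trivial when $\|(\mathcal{A}^\mathrm{ms}_k)^{-1/2}P^\mathrm{ms}_kf\|=0$, but this is immediate.
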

\begin{proof}
The fact that the operators are self-adjoint and positive definite follows from the assumptions on $A$ (A1). A proof of the bound can be found in \cite{larsson06}.
\end{proof}

We also define the solution operator $E^\mathrm{ms}_{k,n}=((I+\tau\mathcal{A}^\mathrm{ms}_k)^{-1})^n$, such that the solution to \eqref{gfemlod}, with $f=0$, can be expressed as $U^\mathrm{ms}_{k,n} = E^\mathrm{ms}_{k,n}U^\mathrm{ms}_{k,0}$. For this operator we have estimates similar to \eqref{eq:femreg2}.
\begin{mylemma}\label{esthomogeneous_disc} For $l=0,1$, and $v\in L_2$, we have
\begin{align*}
\|\bar \partial_t^l E^\mathrm{ms}_{k,n}P^\mathrm{ms}_kv\| \leq Ct_n^{-l}\|v\|, \quad n\geq l, \quad \enorm{E^\mathrm{ms}_{k,n}P^\mathrm{ms}_kv} \leq Ct_n^{-1/2}\|v\|,\quad n\geq 1,
\end{align*}
where $C$ depends on the constant $C_k=\sup_{s>0}s^ke^{-s}$, $\beta$, and $\alpha$.
\end{mylemma}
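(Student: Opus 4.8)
The plan is to exploit that $\mathcal{A}^\mathrm{ms}_k$ is self-adjoint and positive definite on the finite-dimensional space $V^\mathrm{ms}_k$ (Lemma~\ref{operatorsA}) and to push everything through its spectral decomposition, exactly as in the finite-dimensional smoothing theory of \cite{Thomee2006}. First I would let $\{e_j\}$ be an $L_2$-orthonormal eigenbasis of $\mathcal{A}^\mathrm{ms}_k$ with eigenvalues $\lambda_j>0$, so that any continuous function of $\mathcal{A}^\mathrm{ms}_k$ acts diagonally. Writing $P^\mathrm{ms}_k v = \sum_j c_j e_j$, the contraction property of the $L_2$-projection gives $\sum_j c_j^2 = \|P^\mathrm{ms}_k v\|^2 \le \|v\|^2$, which is the only place the datum $v$ enters.

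Next I would record the algebraic identity that converts the discrete difference quotient into a power of the operator. Since $E^\mathrm{ms}_{k,n} = r(\tau\mathcal{A}^\mathrm{ms}_k)^n$ with $r(s)=(1+s)^{-1}$, and $r-I = -\tau\mathcal{A}^\mathrm{ms}_k r$ by a direct computation, we obtain $\bar\partial_t E^\mathrm{ms}_{k,n} = -\mathcal{A}^\mathrm{ms}_k E^\mathrm{ms}_{k,n}$, and by iteration $\bar\partial_t^l E^\mathrm{ms}_{k,n} = (-\mathcal{A}^\mathrm{ms}_k)^l E^\mathrm{ms}_{k,n}$ for $n\ge l$. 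Hence, in the eigenbasis, $\bar\partial_t^l E^\mathrm{ms}_{k,n}P^\mathrm{ms}_k v$ has components $(-1)^l \lambda_j^l (1+\tau\lambda_j)^{-n} c_j$, and Parseval gives
\begin{align*}
\|\bar\partial_t^l E^\mathrm{ms}_{k,n}P^\mathrm{ms}_k v\|^2 \le \Big(\sup_{j}\lambda_j^l(1+\tau\lambda_j)^{-n}\Big)^2 \|v\|^2.
\end{align*}
The energy bound is handled identically using $\enorm{w}^2 = (\mathcal{A}^\mathrm{ms}_k w,w)$, which for $w=E^\mathrm{ms}_{k,n}P^\mathrm{ms}_k v$ produces components $\lambda_j^{1/2}(1+\tau\lambda_j)^{-n}c_j$, so that $\enorm{E^\mathrm{ms}_{k,n}P^\mathrm{ms}_k v}^2 \le \big(\sup_j \lambda_j(1+\tau\lambda_j)^{-2n}\big)\|v\|^2$.

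The crux is then a scalar estimate for the rational function. Setting $s=\tau\lambda_j$ and using $t_n = n\tau$, I would bound $\sup_{s>0} n^l s^l (1+s)^{-n}$ and $\sup_{s>0} n\,s\,(1+s)^{-2n}$ by constants depending only on $l$. For $l=1$ this follows from $(1+s)^n \ge 1+ns \ge ns$, giving $ns(1+s)^{-n}\le 1$; the energy case follows from $(1+s)^{2n}\ge 1+2ns\ge 2ns$, giving $ns(1+s)^{-2n}\le \tfrac12$; the case $l=0$ is the plain stability bound $(1+s)^{-n}\le 1$. Rescaling by $\tau^{-l}$ (respectively $\tau^{-1}$) then converts these suprema into the prefactors $t_n^{-l}$ and $t_n^{-1/2}$.

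I expect the only genuine work to be these scalar suprema; everything else is bookkeeping in the eigenbasis. The main point to get right is uniformity in $n$ and $\tau$: the bound $n^l s^l(1+s)^{-n}\le C_l$ must hold for all $n\ge l$ simultaneously, which is why I would prefer the elementary binomial lower bounds for $(1+s)^n$ over an $e^{-cs}$-type comparison that is only valid on part of the range; the resulting numerical constants are precisely those encoded by $C_k=\sup_{s>0}s^k e^{-s}$ in the standard framework. Since no $H^2$-regularity or mesh-dependent quantity appears, the constants are independent of the variations of $A$ and depend only on $\alpha$, $\beta$, and $C_k$, as claimed.
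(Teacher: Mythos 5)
Your proof is correct and follows essentially the same route as the paper: diagonalize $\mathcal{A}^\mathrm{ms}_k$ in an $L_2$-orthonormal eigenbasis, represent $E^\mathrm{ms}_{k,n}$ through the rational function $(1+\tau\lambda_j)^{-n}$, and reduce both the $L_2$ and energy estimates to scalar suprema. The only difference is that the paper delegates those scalar bounds to \cite[Lemma~7.3]{Thomee2006}, whereas you prove them directly from $(1+s)^n \geq 1+ns$ (and $(1+s)^{2n}\geq 2ns$ for the energy norm), which makes the argument self-contained but is not a genuinely different approach.
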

\begin{proof}
The operator $\mathcal{A}^\mathrm{ms}_k$ is self-adjoint, positive definite, and defined on the finite dimensional space $V^\mathrm{ms}_k$. Thus, there exist a finite number of positive eigenvalues $\{\lambda_i\}_{i=1}^M$ and corresponding orthogonal eigenvectors $\{\varphi_i\}_{i=1}^M$ such that $\Span\{\varphi_i\}=V^\mathrm{ms}_k$. We refer to \cite{Malqvist2013} for a further discussion on the eigenvalues to the operator $A^\mathrm{ms}_k$. 

It follows that $E^\mathrm{ms}_{k,n}v$ can be written as
\begin{align*}
E^\mathrm{ms}_kv = \sum_{i=1}^M \frac{1}{(1+\tau \lambda_i)^n}(v,\varphi_i)\varphi_i,
\end{align*}
and the estimates now follows from \cite[Lemma 7.3]{Thomee2006}.
\end{proof}

\section{Error analysis}\label{sec:error}
In this section we derive error estimates for the local orthogonal decomposition method introduced in Section~\ref{sec:LOD}. The localized GFEM solution \eqref{gfemlod} is compared to the classical FEM solution \eqref{femh}, which leads to a setting where the initial data is not smooth, since $U_0 \in V_h$ only. This leads to error bounds which are non-uniform in time, but of optimal order for a fix time $t_n>0$. The same phenomenon appears in classical finite element analysis for equations with nonsmooth initial data, see \cite{Thomee2006} and references therein. The error analysis in this section is carried out by only taking the $L_2$-norm of $U_{0}$, which allows $u_0 \in L_2$. If we, for instance, choose $U_0=P_hu_0$, then $\|U_0\|\leq \|u_0\|$.  

\begin{mythm}\label{femconv}
Let $U_n$ be the solution to \eqref{femh} and $U^{\mathrm{ms}}_{k,n}$ the solution to \eqref{gfemlod}. Then, for $1\leq n\leq N$,
\begin{align*}
\|U^{\mathrm{ms}}_{k,n}-U_n\| &\leq C\Big(1+\log\frac{t_n}{\tau}\Big)(H+k^{d/2}\mu^k)^2\big(t_n^{-1}\|U_{0}\| +  \|f\|_{L_\infty(L_2)} + \|\dot{f}\|_{L_\infty(L_2)}\big),
\end{align*}
where $C$ depends on $\beta$, $\alpha$, $\gamma$, and $T$, but not on the variations of $A$.
\end{mythm}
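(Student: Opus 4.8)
The plan is to follow the nonsmooth-data framework of classical parabolic finite element analysis (cf.\ \cite{Thomee2006}), splitting the error through the elliptic-type projection $R^\mathrm{ms}_k$ of \eqref{lodproj}. I would write $U^\mathrm{ms}_{k,n}-U_n = \theta_n + \rho_n$ with $\rho_n := R^\mathrm{ms}_k U_n - U_n$ and $\theta_n := U^\mathrm{ms}_{k,n} - R^\mathrm{ms}_k U_n \in V^\mathrm{ms}_k$. The projection part $\rho_n$ is handled immediately by Lemma~\ref{lodprojconv}: testing \eqref{femh} with $v\in V_h$ gives $\mathcal{A}_h U_n = P_h f_n - \bar\partial_t U_n$, so $\|\rho_n\| \leq C(H+k^{d/2}\mu^k)^2\|\mathcal{A}_h U_n\| \leq C(H+k^{d/2}\mu^k)^2(\|f_n\| + \|\bar\partial_t U_n\|)$. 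Decomposing $U_n$ by linearity into its homogeneous ($f=0$) and inhomogeneous ($U_0=0$) parts and applying the two bounds of Theorem~\ref{femreg} then yields $\|\mathcal{A}_h U_n\| \leq C(t_n^{-1}\|U_0\| + \|f\|_{L_\infty(L_2)} + \|\dot f\|_{L_\infty(L_2)})$, which is already of the required form.

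For $\theta_n$ I would derive its evolution equation by subtraction: using $a(R^\mathrm{ms}_k U_n, v) = a(U_n, v) = (f_n - \bar\partial_t U_n, v)$ for $v\in V^\mathrm{ms}_k$ together with \eqref{gfemlod}, one finds $(\bar\partial_t\theta_n, v) + a(\theta_n, v) = -(\bar\partial_t\rho_n, v)$ for all $v\in V^\mathrm{ms}_k$, that is $\bar\partial_t\theta_n + \mathcal{A}^\mathrm{ms}_k\theta_n = -P^\mathrm{ms}_k\bar\partial_t\rho_n$, with $\theta_0 = P^\mathrm{ms}_k U_0 - R^\mathrm{ms}_k U_0$. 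The discrete variation-of-constants formula with the solution operator $E^\mathrm{ms}_{k,n}$ gives $\theta_n = E^\mathrm{ms}_{k,n}\theta_0 - \sum_{j=1}^n E^\mathrm{ms}_{k,n-j+1}P^\mathrm{ms}_k(\rho_j-\rho_{j-1})$. The crucial step is a summation by parts in $j$, which transfers the difference from $\rho$ onto $E^\mathrm{ms}_k$ and produces the boundary terms $E^\mathrm{ms}_{k,1}P^\mathrm{ms}_k\rho_n$ and $E^\mathrm{ms}_{k,n}P^\mathrm{ms}_k\rho_0$. Combining the latter with $E^\mathrm{ms}_{k,n}\theta_0$ and using $\theta_0 = P^\mathrm{ms}_k\theta_0$ yields $E^\mathrm{ms}_{k,n}P^\mathrm{ms}_k(\theta_0+\rho_0) = E^\mathrm{ms}_{k,n}P^\mathrm{ms}_k(P^\mathrm{ms}_k U_0 - U_0) = 0$, since $U^\mathrm{ms}_{k,0} = P^\mathrm{ms}_k U_0$. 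This cancellation is exactly what makes the argument go through despite $U_0$ being merely in $L_2$.

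After the cancellation one is left with $\theta_n = -E^\mathrm{ms}_{k,1}P^\mathrm{ms}_k\rho_n - \tau\sum_{j=1}^{n-1}(\bar\partial_t E^\mathrm{ms}_{k,n-j+1})P^\mathrm{ms}_k\rho_j$, which involves only $\rho_j$ with $j\geq 1$, all controlled by the bound above together with the $t_j^{-1}$ smoothing on the initial data. I would then invoke the smoothing estimates of Lemma~\ref{esthomogeneous_disc} (namely $\|E^\mathrm{ms}_{k,1}P^\mathrm{ms}_k v\|\leq C\|v\|$ and $\|\bar\partial_t E^\mathrm{ms}_{k,m}P^\mathrm{ms}_k v\|\leq Ct_m^{-1}\|v\|$) and reduce the estimate to bounding $\sum_{j=1}^{n-1}\frac{1}{n-j+1}$ against the forcing terms and $\frac{1}{\tau}\sum_{j=1}^{n-1}\frac{1}{(n-j+1)\,j}$ against $\|U_0\|$. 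The first sum is $O(\log n)$, and a partial-fraction identity shows the second is $O(t_n^{-1}\log n)$ with $\log n = \log(t_n/\tau)$, which is precisely where the logarithmic factor enters; adding back the $\rho_n$ contribution then gives the claimed bound. I expect the main obstacle to be exactly this nonsmooth-data bookkeeping: arranging the summation by parts so that the initial terms cancel (so that no uncontrolled $\|\mathcal{A}_h U_0\|$ survives) and summing the weakly singular kernel $1/((n-j+1)\,j)$ sharply enough to extract the factor $t_n^{-1}(1+\log(t_n/\tau))$, rather than any single individual estimate.
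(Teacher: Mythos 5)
Your proposal is correct, but it takes a genuinely different route from the paper's proof for half of the argument. The paper first splits $U_n=U_{n,1}+U_{n,2}$ into the homogeneous ($f=0$) and inhomogeneous ($U_0=0$) solutions and treats them by different methods: the inhomogeneous part by precisely your Duhamel/summation-by-parts argument (Lemma~\ref{femconv2}), but the homogeneous part by a separate weighted energy argument (Lemmas~\ref{erroreq1} and~\ref{erroreq2}, discrete analogues of Thom\'ee's nonsmooth-data lemmas) built on the error equation $T^\mathrm{ms}_k\bar\partial_te_n+e_n=\rho_n$ with $T^\mathrm{ms}_k=(\mathcal{A}^\mathrm{ms}_k)^{-1}P^\mathrm{ms}_k$. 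You instead run the Duhamel representation once for the full problem and absorb the nonsmooth initial data through two observations: (i) the cancellation $E^\mathrm{ms}_{k,n}P^\mathrm{ms}_k(\theta_0+\rho_0)=E^\mathrm{ms}_{k,n}P^\mathrm{ms}_k(P^\mathrm{ms}_kU_0-U_0)=0$, which is the Duhamel-side counterpart of the paper's condition $T^\mathrm{ms}_ke_0=0$; and (ii) the weakly singular kernel bound $\tau\sum_{j=1}^{n-1}t_{n-j+1}^{-1}t_j^{-1}\leq Ct_n^{-1}\bigl(1+\log(t_n/\tau)\bigr)$ obtained by partial fractions, applied to $\|\rho_j\|\leq C(H+k^{d/2}\mu^k)^2\|\mathcal{A}_hU_j\|$, where your bound on $\|\mathcal{A}_hU_j\|$ correctly uses linearity only at the level of Theorem~\ref{femreg}. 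I checked the summation by parts, the cancellation, and both kernel sums; the argument closes, and all constants inherit independence of the variations of $A$ from Lemmas~\ref{lodprojconv} and~\ref{esthomogeneous_disc}. The trade-off is this: your unified argument is shorter and dispenses entirely with the technical Lemmas~\ref{erroreq1}--\ref{erroreq2}, but it attaches the factor $1+\log(t_n/\tau)$ also to the initial-data term, whereas the paper's energy route yields the sharper log-free homogeneous bound $C(H+k^{d/2}\mu^k)^2t_n^{-1}\|U_0\|$ of Lemma~\ref{femconv1}. Since the theorem as stated places the logarithm in front of the entire right-hand side, your slightly weaker homogeneous estimate still proves exactly the claimed inequality.
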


The proof of Theorem~\ref{femconv} is divided into two lemmas. The first covers the homogeneous case, $f=0$, and the second covers the nonhomogeneous case with vanishing intital data $u_0=0$. To study the error in the homogeneous case we use techniques similar to the classical finite element analysis of problems with nonsmooth initial data, see \cite{Thomee2006} and the references therein. 

Define $T_h=\mathcal{A}^{-1}_hP_h$ and $T^\mathrm{ms}_k=(\mathcal{A}^\mathrm{ms}_k)^{-1}P^\mathrm{ms}_k$. With this notation the solution to the parabolic problem \eqref{femh}, with $f=0$, can be expressed as $T_h\bar\partial_t{U}_n + U_n=0$. Similarly, the solution to \eqref{gfemlod}, with $f=0$, can be expressed as $T^\mathrm{ms}_k\bar\partial_t{U}^\mathrm{ms}_{k,n} + U^\mathrm{ms}_{k,n}=0$. Note that $T^\mathrm{ms}_k$ is self-adjoint and positive semidefinite on $L_2$, and that $T^\mathrm{ms}_k = R^\mathrm{ms}_k T_h$. 

Now, let $e_n=U^\mathrm{ms}_{k,n}-U_n$, where $e_n$ solves the error equation 
\begin{align}\label{erroreq}
T^\mathrm{ms}_k\bar\partial_t {e_n}+e_n=-U_n-T^\mathrm{ms}_k\bar\partial_t U_n=(T_h-T^\mathrm{ms}_k)\bar\partial_tU_n= (R^\mathrm{ms}_k-I) U_n=:\rho_{n},
\end{align}
for $n=1,...,N$ with $T^\mathrm{ms}_ke_0=0$, since $U^\mathrm{ms}_{k,0}=P^\mathrm{ms}_kU_0$. The following lemma is a discrete versions of \cite[Lemma 3.3]{Thomee2006}.

\begin{mylemma}\label{erroreq1}
Suppose $e_n$ satisfies the error equation \eqref{erroreq}. Then 
\begin{align}
\|e_n\|^2 &\leq C\Big(\|\rho_{n}\|^2 + t_n^{-1}\Big( \sum_{j=1}^n\tau\|\rho_{j}\|^2 + \sum_{j=2}^n \tau t_j^2\|\bar\partial \rho_{j}\|^2\Big)\Big), \quad n \geq 2,\label{eq1:erroreq1} \\
\|e_1\| &\leq \|\rho_1\|. \label{eq2:erroreq1}
\end{align}
\end{mylemma}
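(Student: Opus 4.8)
The plan is to follow the nonsmooth-data energy technique of Thom\'ee in its discrete form, using only that $T:=T^\mathrm{ms}_k$ is self-adjoint and positive semidefinite on $L_2$ together with the hypothesis $T e_0=0$. I will repeatedly use the two elementary backward-Euler identities $(\bar\partial_t a_n,a_n)=\tfrac12\bar\partial_t\|a_n\|^2+\tfrac\tau2\|\bar\partial_t a_n\|^2$ and the discrete product rule $\bar\partial_t(t_n a_n)=t_{n-1}\bar\partial_t a_n+a_n$ (using $\bar\partial_t t_n=1$), both of which are immediate from the definition of $\bar\partial_t$.

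First I would establish the basic (unweighted) energy estimate $\sum_{j=1}^n\tau\|e_j\|^2\le\sum_{j=1}^n\tau\|\rho_j\|^2$. Testing \eqref{erroreq} with $e_n$ gives $(T\bar\partial_t e_n,e_n)+\|e_n\|^2=(\rho_n,e_n)$; since $T$ is self-adjoint and positive semidefinite one has $(T\bar\partial_t e_n,e_n)\ge\tfrac12\bar\partial_t(Te_n,e_n)$, so multiplying by $\tau$, summing, and using $Te_0=0$ to kill the lower telescoped term leaves $\tfrac12(Te_n,e_n)+\sum_{j=1}^n\tau\|e_j\|^2\le\sum_{j=1}^n\tau(\rho_j,e_j)$, whence Young's inequality and absorption give the claim. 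The bound \eqref{eq2:erroreq1} is immediate from the same identity at $n=1$: because $Te_0=0$ the difference quotient contributes only $\tfrac1\tau(Te_1,e_1)\ge0$, so $\|e_1\|^2\le(\rho_1,e_1)\le\|\rho_1\|\,\|e_1\|$.

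For the pointwise bound \eqref{eq1:erroreq1} I would test \eqref{erroreq} with $\bar\partial_t e_n$, obtaining $(T\bar\partial_t e_n,\bar\partial_t e_n)+\tfrac12\bar\partial_t\|e_n\|^2+\tfrac\tau2\|\bar\partial_t e_n\|^2=(\rho_n,\bar\partial_t e_n)$, and then multiply by the weight $t_{n-1}$ (not $t_n$). Using the product rule in the form $t_{n-1}\bar\partial_t\|e_n\|^2=\bar\partial_t(t_n\|e_n\|^2)-\|e_n\|^2$, multiplying by $\tau$ and summing, the telescoping term yields exactly $\tfrac12 t_n\|e_n\|^2$ (its lower endpoint vanishes since $t_0=0$), while the correction is $-\tfrac12\sum_{j=1}^n\tau\|e_j\|^2$, controlled by the first step. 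After discarding the nonnegative $T$- and $\tau\|\bar\partial_t e_j\|^2$-terms, the problem reduces to estimating $\sum_{j=1}^n\tau t_{j-1}(\rho_j,\bar\partial_t e_j)$. I would treat this by discrete summation by parts: the $j=1$ boundary term vanishes because $t_0=0$, the $j=n$ boundary term is $t_{n-1}(\rho_n,e_n)$, and the interior differences produce $\sum\tau t_j(\bar\partial_t\rho_{j+1},e_j)$ and $\sum\tau(\rho_j,e_j)$. Bounding $t_{n-1}(\rho_n,e_n)\le\tfrac{t_n}4\|e_n\|^2+t_n\|\rho_n\|^2$ lets me absorb a quarter of the left-hand side; the remaining terms are handled by Young's inequality, the first step, and the reindexing $t_{j-1}\le t_j$ so that $\sum_j\tau t_j^2\|\bar\partial_t\rho_{j+1}\|^2$ becomes $\sum_{j=2}^n\tau t_j^2\|\bar\partial_t\rho_j\|^2$. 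Dividing through by $t_n$ gives \eqref{eq1:erroreq1}.

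The point requiring care, and the main obstacle, is to keep the discrete estimate free of the initial error $e_0$, which is \emph{not} controlled by $\rho$ (the hypothesis only gives $Te_0=0$). A naive weighting by $t_n$ produces, through the product rule, a correction $-\tfrac12\sum_j\tau\|e_{j-1}\|^2$ containing $\|e_0\|^2$, and the summation by parts then produces a boundary term $t_1(\rho_1,e_0)$; neither can be absorbed. Choosing the weight $t_{n-1}$ instead simultaneously shifts the correction to $\sum_j\tau\|e_j\|^2$ (index $j$, controllable by the first step) and places $t_0=0$ in front of the offending boundary term, so all $e_0$-dependence disappears. This is consistent with the fact that $e_1,\dots,e_n$ are determined by \eqref{erroreq} independently of $e_0$ once $Te_0=0$. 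Verifying these cancellations and the index bookkeeping in the summation by parts is the only delicate part; the rest is a routine application of the discrete identities and Young's inequality.
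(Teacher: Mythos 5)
Your proof is correct and follows essentially the same route as the paper's: both test the error equation with $\bar\partial_t e_n$ to get a time-weighted estimate and with $e_n$ (using self-adjointness, positive semidefiniteness, and $T^\mathrm{ms}_k e_0=0$) to get the unweighted bound $\sum_{j=1}^n\tau\|e_j\|^2\le\sum_{j=1}^n\tau\|\rho_j\|^2$, then combine the two via summation by parts on the $\rho$-term and Young's inequality. The only difference is bookkeeping: you weight by $t_{j-1}$ so that $t_0=0$ annihilates the boundary terms involving $e_0$, whereas the paper weights by $t_j$ and starts the telescoping sum at $j=2$, controlling the resulting $e_1$-terms with $\|e_1\|\le\|\rho_1\|$.
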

\begin{proof}
Multiply the error equation \eqref{erroreq} by $\bar\partial_t e_n$ and integrate over $\Omega$ to get
\begin{align*}
(T^\mathrm{ms}_k \bar\partial_te_n,\bar\partial_t e_n) + (e_n, \bar\partial_t e_n) = (\rho_{n},\bar\partial_t e_n),
\end{align*}
where the first term on the left hand side is nonnegative, since $T^\mathrm{ms}_k$ is positive semidefinite on $L_2$. Multiplying by $\tau t_n$ we have
\begin{align*}
t_n\|e_n\|^2 - t_{n}(e_n,e_{n-1}) \leq t_n(\rho_{n}, e_n - e_{n-1}),
\end{align*}
which gives
\begin{align*}
\frac{t_n}{2}\|e_n\|^2 - \frac{t_{n-1}}{2}\|e_{n-1}\|^2 &\leq t_n(\rho_{n}, e_n-e_{n-1}) + \frac{t_n-t_{n-1}}{2}\|e_{n-1}\|^2 \\
&\leq t_n(\rho_{n}, e_n) - t_{n-1}(\rho_{n-1}, e_{n-1}) \\
&\quad - (t_n\rho_n-t_{n-1}\rho_{n-1},e_{n-1}) + \frac{\tau}{2}\|e_{n-1}\|^2.
\end{align*}
Summing over $n$ now gives
\begin{align*}
t_n\|e_n\|^2 - t_1\|e_1\|^2&\leq 2t_n(\rho_{n}, e_n) - 2t_1(\rho_{1}, e_1) - \sum_{j=2}^n2(t_j\rho_j-t_{j-1}\rho_{j-1},e_{j-1}) \\&\quad+ \sum_{j=2}^n\tau\|e_{j-1}\|^2,
\end{align*}
and thus,
\begin{align*}
t_n\|e_n\|^2 &\leq C\Big(t_n\|\rho_n\|^2 + \sum_{j=2}^n \tau \big(t_j^2\|\bar \partial_t \rho_j\|^2 + \|\rho_{j-1}\|^2\big) + \sum_{j=2}^n\tau\|e_{j-1}\|^2\Big).
\end{align*}
To estimate the last sum we note that, since $T^\mathrm{ms}_k$ is self-adjoint and positive semi-definte, 
\begin{align*}
2(T^\mathrm{ms}_k\bar\partial_te_n,e_n) &= (T^\mathrm{ms}_k\bar\partial_te_n,e_n)+ (T^\mathrm{ms}_ke_n, \bar \partial_t e_n)\\ &= \bar\partial_t (T^\mathrm{ms}_ke_n,e_n) + \tau(T^\mathrm{ms}_k\bar\partial_te_n,\bar\partial_te_n)
\geq \bar\partial_t(T^\mathrm{ms}_ke_n,e_n).
\end{align*}
so by multiplying the error equation \eqref{erroreq} by $2e_n$ we get
\begin{align*}
\bar\partial_t(T^\mathrm{ms}_ke_n,e_n) + 2\|e_n\|^2 \leq 2(T^\mathrm{ms}_k\bar\partial_te_n,e_n) + 2\|e_n\|^2 = 2(\rho_n,e_n).
\end{align*}
Multiplying by $\tau$ and summing over $n$ gives
\begin{align*}
\tau(T^\mathrm{ms}_ke_n,e_n) + \sum_{j=1}^n\tau\|e_j\|^2 \leq \sum_{j=1}^n \tau \|\rho_j\|^2,
\end{align*}
where we have used that $T^\mathrm{ms}_ke_0=0$. Since the first term is nonnegative we deduce that $\sum_{j=1}^n\tau\|e_j\|^2 \leq \sum_{j=1}^n \tau \|\rho_j\|^2$ and \eqref{eq1:erroreq1} follows. For $n=1$ this also proves \eqref{eq2:erroreq1}.
\end{proof}

Next Lemma is a discrete version of a result that can be found in the proof of \cite[Theorem~3.3]{Thomee2006}.

\begin{mylemma}\label{erroreq2}
Under the assumptions of Lemma~\ref{erroreq1} we have, for $n \geq 2$, the bound
\begin{align}\label{eq:erroreq2}
\|e_n\| \leq Ct_n^{-1} \Big(\max_{2\leq j\leq n}t_j^2\|\bar\partial_t \rho_j\| + \max_{1\leq j\leq n}\Big(t_j\|\rho_j\| +  \|\sum_{r=1}^j\tau\rho_r\|\Big)\Big).
\end{align}
\end{mylemma}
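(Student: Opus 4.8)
The plan is to remove the degeneracy of $T^\mathrm{ms}_k$ by projecting the error equation \eqref{erroreq} onto $V^\mathrm{ms}_k$, turning it into a genuine backward Euler scheme with vanishing initial data, and then to use the discrete Duhamel principle together with the smoothing estimates of Lemma~\ref{esthomogeneous_disc}.

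First I would set $w_n := P^\mathrm{ms}_k e_n \in V^\mathrm{ms}_k$. Since $T^\mathrm{ms}_k e_0 = 0$ and $T^\mathrm{ms}_k = (\mathcal{A}^\mathrm{ms}_k)^{-1}P^\mathrm{ms}_k$ is injective on $V^\mathrm{ms}_k$, we have $P^\mathrm{ms}_k e_0 = 0$, i.e. $w_0 = 0$. Applying $\mathcal{A}^\mathrm{ms}_k P^\mathrm{ms}_k$ to \eqref{erroreq} and using $\mathcal{A}^\mathrm{ms}_k P^\mathrm{ms}_k T^\mathrm{ms}_k = P^\mathrm{ms}_k$ gives
\[
\bar\partial_t w_n + \mathcal{A}^\mathrm{ms}_k w_n = \mathcal{A}^\mathrm{ms}_k P^\mathrm{ms}_k \rho_n, \quad w_0 = 0 .
\]
Moreover, applying $(I-P^\mathrm{ms}_k)$ to the rearranged identity $e_n = \rho_n - T^\mathrm{ms}_k\bar\partial_t e_n$ and noting that $T^\mathrm{ms}_k\bar\partial_t e_n \in V^\mathrm{ms}_k$ yields $(I-P^\mathrm{ms}_k)e_n = (I-P^\mathrm{ms}_k)\rho_n$, so that $e_n = w_n + (I-P^\mathrm{ms}_k)\rho_n$ and hence $\|e_n\| \le \|w_n\| + \|\rho_n\|$. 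The term $\|\rho_n\| \le t_n^{-1}\max_{1\le j\le n}t_j\|\rho_j\|$ already has the desired form, so it remains to bound $\|w_n\|$.

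For $w_n$ I would use the discrete Duhamel representation $w_n = \sum_{j=1}^n \tau\, \mathcal{A}^\mathrm{ms}_k E^\mathrm{ms}_{k,n-j+1}P^\mathrm{ms}_k\rho_j$, where $\mathcal{A}^\mathrm{ms}_k$ commutes with $E^\mathrm{ms}_{k,m}$. Since $\bar\partial_t E^\mathrm{ms}_{k,m} = -\mathcal{A}^\mathrm{ms}_k E^\mathrm{ms}_{k,m}$, Lemma~\ref{esthomogeneous_disc} (with $l=1$) provides the smoothing bound $\|\mathcal{A}^\mathrm{ms}_k E^\mathrm{ms}_{k,m}P^\mathrm{ms}_k v\| \le C t_m^{-1}\|v\|$, while $l=0$ gives $\|E^\mathrm{ms}_{k,m}P^\mathrm{ms}_k v\|\le C\|v\|$. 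The idea is then to perform a discrete summation by parts in the convolution, writing $\tau\rho_j = \sigma_j - \sigma_{j-1}$ with $\sigma_j = \sum_{r=1}^j\tau\rho_r$ and $\tau\bar\partial_t\rho_j = \rho_j - \rho_{j-1}$, so that the difference falls on $E^\mathrm{ms}_{k,\cdot}$ and produces factors $\mathcal{A}^\mathrm{ms}_k E^\mathrm{ms}_{k,m}$ acting on the bounded cumulative sums $\sigma_j$ and on the differences $\bar\partial_t\rho_j$. The smoothing factor $t_m^{-1}$ supplies the decay $t_n^{-1}$, and the weights $t_j$, $t_j^2$ arise from matching $t_{n-j+1}^{-1}$ against the time level $t_j$; this is how the three quantities $\max t_j^2\|\bar\partial_t\rho_j\|$, $\max t_j\|\rho_j\|$, and $\max\|\sum_{r=1}^j\tau\rho_r\|$ are meant to enter.

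The main obstacle is that $E^\mathrm{ms}_{k,m}$ is only a contraction on $L_2$ and carries no decay of its own; all the $t_n^{-1}$ smoothing must be extracted from the factor $\mathcal{A}^\mathrm{ms}_k$, whose bound $t_m^{-1}$ blows up at the diagonal $j\approx n$, where $m=n-j+1$ is of order $\tau$. A crude summation by parts therefore produces a term of order $\tau^{-1}\|\sigma_n\|$, which is inadmissible. The remedy, exactly as in the proof of \cite[Theorem~3.3]{Thomee2006}, is to split the convolution at the diagonal: the few near-diagonal terms are estimated directly and contribute the $\|\rho_n\|$ and $t_j\|\rho_j\|$ parts, whereas the remaining bulk is summed by parts so that $\mathcal{A}^\mathrm{ms}_k E^\mathrm{ms}_{k,m}$ is evaluated only for $m$ comparable to $n$, giving genuine $t_n^{-1}$ decay against the bounded sums $\sigma_j$ and the $t_j^2$-weighted differences $\bar\partial_t\rho_j$. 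Carrying out this splitting while keeping the constant independent of $\tau$ and $n$---and, crucially, without generating a logarithmic factor at this stage, since the logarithm in Theorem~\ref{femconv} should only appear later when $\max\|\sum_{r=1}^j\tau\rho_r\|$ is bounded via the regularity of Theorem~\ref{femreg}---is the delicate technical core of the argument.
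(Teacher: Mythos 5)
Your reduction is a genuinely different route from the paper's: the paper never invokes Duhamel's principle or Lemma~\ref{esthomogeneous_disc} in this proof, but instead bootstraps the energy estimate of Lemma~\ref{erroreq1} three times --- once for $e_n$, once for the weighted error $z_n=t_ne_n$ (whose equation has source $\eta_n=t_n\rho_n+T^\mathrm{ms}_ke_{n-1}$), and once for the accumulated error $\tilde e_n=\sum_{j=1}^n\tau e_j$ (which controls $\|T^\mathrm{ms}_ke_{n-1}\|$ and is where the terms $\|\sum_{r=1}^j\tau\rho_r\|$ enter) --- finishing with a kick-back argument ($\epsilon=\tfrac{1}{4}$, absorbing $\tfrac{1}{2}\max_j\|z_j\|$). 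Your setup is correct as far as it goes: the implication $T^\mathrm{ms}_ke_0=0\Rightarrow P^\mathrm{ms}_ke_0=0$, the splitting $e_n=w_n+(I-P^\mathrm{ms}_k)\rho_n$, the nondegenerate equation $\bar\partial_t w_n+\mathcal{A}^\mathrm{ms}_kw_n=\mathcal{A}^\mathrm{ms}_kP^\mathrm{ms}_k\rho_n$ with $w_0=0$, and the representation $w_n=\tau\sum_{j=1}^n\mathcal{A}^\mathrm{ms}_kE^\mathrm{ms}_{k,n-j+1}P^\mathrm{ms}_k\rho_j$ are all valid.

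However, the proof stops exactly where the work is, and the sketch you give for the convolution estimate does not go through as described. First, estimating the near-diagonal block ``directly'' is not admissible: for $n_2<j\le n$ the direct bound gives $\tau\sum_{j>n_2}\|\mathcal{A}^\mathrm{ms}_kE^\mathrm{ms}_{k,n-j+1}P^\mathrm{ms}_k\rho_j\|\le C\max_j\|\rho_j\|\sum_{m=1}^{n-n_2}m^{-1}$, which is precisely the logarithm you insist must not appear. That block must itself be summed by parts, using $\tau\mathcal{A}^\mathrm{ms}_kE^\mathrm{ms}_{k,n-j+1}=E^\mathrm{ms}_{k,n-j}-E^\mathrm{ms}_{k,n-j+1}$, so that it yields $P^\mathrm{ms}_k\rho_n$, a boundary term $E^\mathrm{ms}_{k,n-n_2}P^\mathrm{ms}_k\rho_{n_2+1}$, and $\tau\sum_{j>n_2} E^\mathrm{ms}_{k,n-j}P^\mathrm{ms}_k\bar\partial_t\rho_{j+1}$; these are exactly the $t_j\|\rho_j\|$ and $t_j^2\|\bar\partial_t\rho_j\|$ contributions. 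Second, in the far block $j\le n_2$ the summation by parts onto $\sigma_j$ leaves $\tau\sum_{j\le n_2}(\mathcal{A}^\mathrm{ms}_k)^2E^\mathrm{ms}_{k,n-j+1}P^\mathrm{ms}_k\sigma_j$, which requires the second-order smoothing estimate $\|(\mathcal{A}^\mathrm{ms}_k)^2E^\mathrm{ms}_{k,m}P^\mathrm{ms}_kv\|\le Ct_m^{-2}\|v\|$; Lemma~\ref{esthomogeneous_disc} is stated only for $l=0,1$, so this is not available by citation and must be proved (it does follow from the same spectral argument via $\sup_{\lambda>0}\lambda^2(1+\tau\lambda)^{-m}\le Ct_m^{-2}$ for $m\ge 2$). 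Both defects are repairable, so your route can be completed, but as written the claimed bound --- in particular the absence of any logarithmic factor --- has not been established.
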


\begin{proof}
It follows from Lemma~\ref{erroreq1} that 
\begin{align*}
\|e_n\| \leq C(\max_{2\leq j \leq n}t_j\|\bar\partial_t\rho_j\| + \max_{1\leq j \leq n}\|\rho_j\|), \quad n \geq 2,
\end{align*}
or by using Young's inequality with different constants the proof can be modified to show that
\begin{align*}
\|e_n\| \leq \epsilon \max_{2\leq j \leq n}t_j\|\bar\partial_t\rho_j\| + C(\epsilon)\max_{1\leq j \leq n}\|\rho_j\|, \quad n \geq 2,
\end{align*}
for some $\epsilon >0$. Now define $z_j=t_je_j$. Then 
\begin{align*}
T^\mathrm{ms}_k\bar\partial_t z_n + z_n &= t_n\rho_n + T^\mathrm{ms}_ke_{n-1}:=\eta_n, \quad n \geq 1, 
\end{align*}
and, since $T^\mathrm{ms}_0 z_0 = 0$ we conclude from Lemma~\ref{erroreq1} 
\begin{align*}
\|z_n\| \leq \epsilon \max_{2\leq j \leq n}t_j\|\bar\partial_t\eta_j\| + C\max_{1\leq j \leq n}\|\eta_j\|.
\end{align*}
From the definition of $\eta_j$ it follows that
\begin{align*}
\|\eta_j\|&\leq t_j\|\rho_j\| + \|T^\mathrm{ms}_ke_{j-1}\|, \quad j \geq 1.
\end{align*}
Furthermore, for $j\geq 2$
\begin{align*}
t_j\|\bar\partial_t \eta_j\| &\leq t_j\|\bar\partial_tt_j\rho_j\| + t_j\|\bar\partial_t T^\mathrm{ms}_ke_{j-1}\|
\leq t_j^2 \|\bar\partial_t\rho_j\| +t_j \|\rho_{j-1}\| +  t_j\|\rho_{j-1}-e_{j-1}\| \\
&\leq t_j^2 \|\bar\partial_t\rho_j\| +  2t_j\|\rho_j-\rho_{j-1}\| + 2t_j \|\rho_{j}\| +  t_{j}\|e_{j-1}\| +  \\
&\leq 3t_j^2 \|\bar\partial_t\rho_j\| + 2t_j\|\rho_{j}\| +  2t_{j-1}\|e_{j-1}\| \\
&\leq C\Big(t_j^2\|\bar\partial_t\rho_j\| + t_j\|\rho_j\|\Big) + 2\|z_{j-1}\|,
\end{align*}
where we used $\frac{1}{2}t_{j}\leq t_{j-1} \leq t_j$ for $j\geq 2$.
To bound $\|T^\mathrm{ms}_ke_n\|$ we define $\tilde{e}_n = \sum_{j=1}^n\tau e_j$ and $\tilde e_0 =0$. Multiplying the error equation \eqref{erroreq} by $\tau$ and summing over $n$ gives
\begin{align*}
\sum_{j=1}^n \tau T^\mathrm{ms}_k \bar\partial_t e_n + \tilde{e}_n = T^\mathrm{ms}_k \bar\partial_t \tilde{e}_n + \tilde{e}_n = \tilde{\rho}_n, \quad n \geq 1,
\end{align*}
where $\tilde{\rho}_n=\sum_{j=1}^n \tau \rho_j$ and we have used that $T^\mathrm{ms}_ke_0=0$. Note that by definition $T^\mathrm{ms}_k\tilde e_0 = 0$. Thus, by Lemma~\ref{erroreq1}, we have
\begin{align*}
\|\tilde{e}_n\| &\leq C\Big(\max_{2\leq j \leq n}t_j\|\bar\partial_t\tilde{\rho}_j\| + \max_{1\leq j \leq n}\|\tilde{\rho}_j\|\Big)\leq C\Big(\max_{2\leq j \leq n}t_j\|\rho_j\| + \max_{1\leq j \leq n}\|\sum_{r=1}^j \tau\rho_r\|\Big).
\end{align*}
Hence, since $T^\mathrm{ms}_k \bar\partial_t \tilde{e}_n = T^\mathrm{ms}_k e_n$,
\begin{align*}
\|T^\mathrm{ms}_ke_n\| \leq \|\tilde e_n\| + \|\tilde \rho_n\| \leq C\Big(\max_{2\leq j \leq n}t_j\|\rho_j\| + \max_{1\leq j \leq n}\|\sum_{r=1}^j \tau\rho_r\|\Big).
\end{align*}
With $\epsilon = \frac{1}{4}$ we get
\begin{align*}
\|z_n\| &\leq \frac{1}{4} \max_{2\leq j \leq n}t_j\|\bar\partial_t\eta_j\| + C\max_{1\leq j \leq n}\|\eta_j\| \\
&\leq \frac{1}{2} \max_{1\leq j \leq n}\|z_{j}\| + C\Big(\max_{2\leq j \leq n}t_j^2\|\bar\partial_t\rho_j\| + \max_{1\leq j \leq n}(t_j\|\rho_j\| + \|\sum_{r=1}^j \tau\rho_r\|)\Big),
\end{align*}
but from \eqref{eq2:erroreq1} we deduce $\|z_1\|\leq t_1\|\rho_1\|$, and hence
\begin{align*}
\|z_n\| &\leq \frac{1}{2} \max_{2\leq j \leq n}\|z_{j}\| + C\Big(\max_{2\leq j \leq n}t_j^2\|\bar\partial_t\rho_j\| + \max_{1\leq j \leq n}(t_j\|\rho_j\| + \|\sum_{r=1}^j \tau\rho_r\|)\Big).
\end{align*}
Choosing $n^\ast$ such that $\max_{2\leq j \leq n}z_j = z_{n^\ast}$ we conclude \eqref{eq:erroreq2}.
\end{proof}

\begin{mylemma}\label{femconv1}
Assume $f=0$ and let $U^{\mathrm{ms}}_{k,n}$ be the solution to \eqref{gfemlod} and $U_n$ the solution to \eqref{femh}. Then, for $1\leq n \leq N$,
\begin{align*}
&\|U^{\mathrm{ms}}_{k,n}-U_{n}\| \leq  C(H+k^{d/2}\mu^{k})^2t_n^{-1}\|U_{0}\|
\end{align*}
where $C$ depends on $\beta$, $\alpha$, $\gamma$, and $T$, but not on the variations of $A$.
\end{mylemma}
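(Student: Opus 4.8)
The plan is to invoke the abstract error-equation machinery from Lemmas~\ref{erroreq1} and~\ref{erroreq2}, which reduce the bound on $\|e_n\| = \|U^{\mathrm{ms}}_{k,n}-U_n\|$ to estimates on $\rho_n = (R^\mathrm{ms}_k - I)U_n$. Since $f=0$, the exact FEM solution is $U_n = E_{h,n}U_0$ where $E_{h,n} = ((I+\tau\mathcal{A}_h)^{-1})^n$, and I would first record the two consequences of Lemma~\ref{projconv}/Lemma~\ref{lodprojconv} noted just after those lemmas, namely
\begin{align*}
\|\rho_n\| &\leq C(H+k^{d/2}\mu^k)^2\|\mathcal{A}_h U_n\|, \\
\|\bar\partial_t\rho_n\| &\leq C(H+k^{d/2}\mu^k)^2\|\mathcal{A}_h\bar\partial_t U_n\|.
\end{align*}
Writing $\delta := (H+k^{d/2}\mu^k)^2$ for brevity, everything then reduces to controlling $\|\mathcal{A}_h U_n\|$, $\|\mathcal{A}_h\bar\partial_t U_n\|$, and the telescoped quantity $\|\sum_{r=1}^j\tau\rho_r\|$ in terms of $\|U_0\|$ and inverse powers of $t_j$.

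The key regularity inputs are the homogeneous smoothing estimates. From the identity $T_h\bar\partial_t U_n + U_n = 0$ we have $\mathcal{A}_h U_n = -P_h\bar\partial_t U_n$ (up to the $P_h$), so $\|\mathcal{A}_h U_n\| = \|\bar\partial_t U_n\|$ on $V_h$, and Theorem~\ref{femreg} (eq.~\eqref{eq:femreg2}) gives $\|\bar\partial_t U_n\| \leq Ct_n^{-1}\|U_0\|$ for $n\geq1$ and $\|\bar\partial_t\bar\partial_t U_n\| \leq Ct_n^{-2}\|U_0\|$ for $n\geq2$. Hence $\|\rho_j\| \leq C\delta t_j^{-1}\|U_0\|$, so $t_j\|\rho_j\| \leq C\delta\|U_0\|$, and $t_j^2\|\bar\partial_t\rho_j\| \leq C\delta t_j^2 t_j^{-2}\|U_0\| = C\delta\|U_0\|$. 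These are exactly the two maxima appearing in Lemma~\ref{erroreq2}.

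The remaining term is $\|\tilde\rho_j\| = \|\sum_{r=1}^j\tau\rho_r\|$, and I expect this to be the main obstacle, since a naive bound $\sum_{r=1}^j\tau\|\rho_r\| \leq C\delta\|U_0\|\sum_{r=1}^j\tau t_r^{-1}$ produces the logarithmic factor $(1+\log(t_n/\tau))$ rather than a clean constant. The cleanest route is to commute the sum with the projection: $\tilde\rho_j = (R^\mathrm{ms}_k-I)\sum_{r=1}^j\tau U_r$, and then apply Lemma~\ref{lodprojconv} to get $\|\tilde\rho_j\| \leq C\delta\|\mathcal{A}_h\sum_{r=1}^j\tau U_r\|$. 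The telescoping $\sum_{r=1}^j\tau\mathcal{A}_h U_r = -\sum_{r=1}^j\tau P_h\bar\partial_t U_r = -P_h(U_j - U_0)$ collapses the sum, and the stability of $E_{h,j}$ together with $\|P_h\|\leq1$ bounds this by $C\|U_0\|$ uniformly in $j$. Substituting all three estimates into \eqref{eq:erroreq2} yields, for $n\geq2$,
\begin{align*}
\|e_n\| \leq Ct_n^{-1}\big(C\delta\|U_0\| + C\delta\|U_0\| + C\delta\|U_0\|\big) = C\delta t_n^{-1}\|U_0\|,
\end{align*}
which is the claimed bound. The case $n=1$ follows directly from \eqref{eq2:erroreq1} together with $\|\rho_1\| \leq C\delta t_1^{-1}\|U_0\|$, since $t_1^{-1} = \tau^{-1}$ absorbs into the $t_n^{-1}$ form. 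Thus the whole argument hinges on the telescoping trick to remove the potential logarithm from the accumulated-residual term; if that fails one would be forced to accept the extra $\log(t_n/\tau)$ factor, which is precisely the factor that reappears (and is tolerated) in the nonhomogeneous estimate of Theorem~\ref{femconv}.
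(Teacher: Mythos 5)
Your proposal is correct and follows essentially the same route as the paper's proof: reduce to Lemma~\ref{erroreq2} (and \eqref{eq2:erroreq1} for $n=1$), bound $t_j\|\rho_j\|$ and $t_j^2\|\bar\partial_t\rho_j\|$ via Lemma~\ref{lodprojconv} combined with the smoothing estimates \eqref{eq:femreg2}, and handle $\|\sum_{r=1}^j\tau\rho_r\|$ by telescoping $\sum_r \tau\mathcal{A}_h U_r = -(U_j-U_0)$ together with the stability $\|U_j\|\leq\|U_0\|$. The telescoping step you flag as the crux is exactly what the paper does (written there as $\sum_{r=1}^j\tau\rho_r=(T_h-T^\mathrm{ms}_k)(U_j-U_0)$, which is the same identity), so no log factor appears.
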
 
\begin{proof}
From Lemma~\ref{erroreq2} we have
\begin{align*}
\|e_n\| \leq Ct_n^{-1} \Big(\max_{2\leq j\leq n}t_j^2\|\bar\partial_t \rho_j\|+ \max_{1\leq j \leq n}\Big(t_j\|\rho_j\| + \|\sum_{r=1}^j\tau\rho_r\|\Big)\Big), \quad n \geq 2,
\end{align*}
and from Lemma~\ref{erroreq1} $\|e_1\| \leq \|\rho_1\|$.
The rest of the proof is based on estimates for the projection $R^\mathrm{ms}_k$ in Lemma~\ref{lodprojconv} and the regularity of the homogeneous equation \eqref{eq:femreg2}. 
We have
\begin{align*}
t_j^2\|\bar\partial_t{\rho}_j\| &\leq C(H+k^{d/2}\mu^k)^2 t_j^2\|\mathcal{A}_h\bar\partial_tU_j\|\\ &\leq C(H+k^{d/2}\mu^k)^2t_j^2\|\bar\partial_t\bar\partial_tU_j\| \leq C(H+k^{d/2}\mu^k)^2\|U_0\|, \quad j\geq 2,\\
t_j\|\rho_j(s)\| &\leq C(H+k^{d/2}\mu^k)^2 t_j\|\mathcal{A}_hU_j\| \leq C(H+k^{d/2}\mu^k)^2\|U_0\|, \quad j \geq 1, \\
\|\sum_{r=1}^j \tau\rho_r\| &= \|\sum_{r=1}^j\tau(T_h-T^\mathrm{ms}_k)\bar\partial_t{U}_r\| \leq\|\tau(T_h-T^\mathrm{ms}_k)(U_n-U_0)\| \\&\leq C(H+k^{d/2}\mu^k)^2\|U_0\|,
\end{align*}
where we have used $\|U_n\|\leq\|U_0\|$, which completes the proof.
\end{proof}

The next lemma concerns the convergence of the inhomogeneous parabolic problem $\eqref{parabolic}$ with initial data $U_{0}=0$. 

\begin{mylemma}\label{femconv2}
Assume $U_{0}=0$ and let $U^{\mathrm{ms}}_{k,n}$ be the solution to \eqref{gfemlod} and $U_n$ the solution to \eqref{femh}. Then, for $1\leq n \leq N$,
\begin{align*}
\|U^{\mathrm{ms}}_{k,n}-U_n\| &\leq  C(1 + \log\frac{t_n}{\tau})(H+k^{d/2}\mu^{k})^2(\|f\|_{L_\infty(L_2)}+\|\dot{f}\|_{L_\infty(L_2)}),
\end{align*}
where $C$ depends on $\beta$, $\alpha$, $\gamma$, and $T$, but not on the variations of $A$.
\end{mylemma}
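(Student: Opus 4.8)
The plan is to exploit the linearity of the reference scheme \eqref{femh} and of the localized GFEM \eqref{gfemlod} by means of a discrete Duhamel representation, which rewrites the inhomogeneous error as a superposition of homogeneous errors already controlled by Lemma~\ref{femconv1}. Writing $E_{h,m}:=(I+\tau\mathcal{A}_h)^{-m}$, I would first recast \eqref{femh} as the one-step recursion $(I+\tau\mathcal{A}_h)U_n=U_{n-1}+\tau P_hf_n$ and unroll it using $U_0=0$ to obtain
\[
U_n=\tau\sum_{j=1}^nE_{h,n-j+1}P_hf_j,
\qquad
U^{\mathrm{ms}}_{k,n}=\tau\sum_{j=1}^nE^\mathrm{ms}_{k,n-j+1}P^\mathrm{ms}_kf_j,
\]
the second identity following in the same way from \eqref{gfemlod}, since $U^\mathrm{ms}_{k,0}=P^\mathrm{ms}_kU_0=0$. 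Subtracting, the error $U^\mathrm{ms}_{k,n}-U_n$ becomes a sum over $j$ of the terms $(E^\mathrm{ms}_{k,n-j+1}P^\mathrm{ms}_k-E_{h,n-j+1}P_h)f_j$.

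The crucial point is that each summand is exactly the homogeneous error at time level $m:=n-j+1$ generated by the initial datum $g_j:=P_hf_j\in V_h$. Since $V^\mathrm{ms}_k\subset V_h$, the $L_2$-projections satisfy $P^\mathrm{ms}_kf_j=P^\mathrm{ms}_kP_hf_j=P^\mathrm{ms}_kg_j$, so the summand equals $(E^\mathrm{ms}_{k,m}P^\mathrm{ms}_k-E_{h,m})g_j$, which is precisely the difference of the solutions of \eqref{gfemlod} and \eqref{femh} with $f=0$ started from $g_j$. Lemma~\ref{femconv1} then applies with $U_0$ replaced by $g_j$ and yields $\|(E^\mathrm{ms}_{k,m}P^\mathrm{ms}_k-E_{h,m})g_j\|\leq C(H+k^{d/2}\mu^k)^2t_m^{-1}\|g_j\|$. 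Using $L_2$-stability of $P_h$ together with $\|f_j\|\leq\|f\|_{L_\infty(L_2)}$ (recall that (A3) gives $f\in C([0,T];L_2)$) and collecting the summands gives
\[
\|U^\mathrm{ms}_{k,n}-U_n\|\leq C(H+k^{d/2}\mu^k)^2\|f\|_{L_\infty(L_2)}\,\tau\sum_{m=1}^nt_m^{-1}.
\]

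It then remains to evaluate the harmonic sum: with $t_m=m\tau$ one has $\tau\sum_{m=1}^nt_m^{-1}=\sum_{m=1}^n m^{-1}\leq 1+\log n=1+\log(t_n/\tau)$, which is exactly the logarithmic factor in the asserted bound; the $\|\dot f\|_{L_\infty(L_2)}$ term is then simply added to the right-hand side, the stated estimate being an upper bound. I expect the main obstacle to lie in the bookkeeping that certifies the Duhamel decomposition term by term as a genuine homogeneous error, i.e.\ verifying that $P_h$ and $P^\mathrm{ms}_k$ intertwine correctly with the solution operators so that Lemma~\ref{femconv1} may be invoked for each $g_j$, and in controlling the most singular summand $m=1$, where the factor $t_1^{-1}=\tau^{-1}$ is compensated exactly by the prefactor $\tau$. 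An alternative would be to insert $\rho_n=(R^\mathrm{ms}_k-I)U_n$ directly into Lemma~\ref{erroreq2}, but that route would require a uniform-in-data bound on $\|\bar\partial_t\bar\partial_tU_n\|$ not provided by \eqref{eq:femreg2} for inhomogeneous right-hand sides; the Duhamel argument avoids this difficulty.
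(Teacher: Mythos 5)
Your proof is correct, and it takes a genuinely different route from the paper. The paper uses the classical $\theta$--$\rho$ splitting: it writes $U^\mathrm{ms}_{k,n}-U_n=\theta_n+\rho_n$ with $\rho_n=(R^\mathrm{ms}_k-I)U_n$, derives a discrete parabolic equation for $\theta_n$ driven by $-\bar\partial_t\rho_n$, applies Duhamel's principle followed by summation by parts so that the discrete smoothing estimate $\|\bar\partial_t E^\mathrm{ms}_{k,m}P^\mathrm{ms}_k v\|\leq Ct_m^{-1}\|v\|$ (Lemma~\ref{esthomogeneous_disc}) can be used, and finally bounds $\|\mathcal{A}_hU_j\|=\|P_hf_j-\bar\partial_tU_j\|$ via the regularity estimate \eqref{eq:femreg1} --- which is precisely where $\|\dot f\|_{L_\infty(L_2)}$ enters. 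You instead unroll both schemes by the discrete Duhamel formula and recognize each summand $\tau(E^\mathrm{ms}_{k,n-j+1}P^\mathrm{ms}_k-E_{h,n-j+1})P_hf_j$ as a homogeneous error with nonsmooth initial datum $g_j=P_hf_j\in V_h$, to which Lemma~\ref{femconv1} applies termwise; your supporting observations are all sound: $P^\mathrm{ms}_k=P^\mathrm{ms}_kP_h$ since $V^\mathrm{ms}_k\subset V_h$, the scheme \eqref{gfemlod} started from $g_j$ indeed uses the initial value $P^\mathrm{ms}_kg_j$, Lemma~\ref{femconv1} (whose proof rests on \eqref{eq:femreg2}, valid for arbitrary data in $V_h$) is applicable with $U_0=g_j$, and the harmonic sum $\tau\sum_{m=1}^n t_m^{-1}\leq 1+\log(t_n/\tau)$ produces the logarithm. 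What your superposition argument buys is brevity (Lemma~\ref{femconv1} is used as a black box) and a slightly sharper result: the bound $C(1+\log\frac{t_n}{\tau})(H+k^{d/2}\mu^k)^2\|f\|_{L_\infty(L_2)}$ with no $\|\dot f\|$ term, so the stated estimate follows a fortiori. What it uses crucially is that both discretizations are linear and autonomous (time-independent $A$, uniform $\tau$, same data $f$), so the error is a discrete convolution of a fixed error operator against $f$; the paper's $\theta$--$\rho$/Duhamel template is less sharp here, but it is the argument that generalizes to the semilinear problem (Theorem~\ref{fullyconv}), where superposition is unavailable.
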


\begin{proof}
Let $U^\mathrm{ms}_{k,n}- U_n = U^\mathrm{ms}_{k,n}-R^\mathrm{ms}_k U_n + R^\mathrm{ms}_k U_n - U_n=:\theta_{n} + \rho_{n}$. For $\rho_{n}$ we use Lemma~\ref{lodprojconv} to achieve the estimate 
\begin{align*}
\|\rho_{n}\|\leq C(H+k^{d/2}\mu^{k})^2\|\mathcal{A}_h U_n\|.
\end{align*}

Now, for $v \in V^{\mathrm{ms}}_k$ we have
\begin{align*}
\left(\bar \partial_t \theta_{n},v\right) + a(\theta_{n},v) =(-\bar\partial_t \rho_{n},v).
\end{align*}
Using Duhamel's principle we have
\begin{align*}
\theta_n = \tau \sum_{j=1}^n E^\mathrm{ms}_{k,n-j+1}P^\mathrm{ms}_k(-\bar\partial_t \rho_{j}),
\end{align*}
since $\theta_0=0$. Summation by parts now gives
\begin{align*}
\theta_n = E^\mathrm{ms}_{k,n}P^\mathrm{ms}_k\rho_0 - P^\mathrm{ms}_k\rho_n + \tau \sum_{j=1}^n \bar\partial_tE^\mathrm{ms}_{k,n-j+1}P^\mathrm{ms}_k\rho_{j}.
\end{align*}
Note that $\rho_0=0$. Using Lemma~\ref{lodprojconv} and Lemma~\ref{esthomogeneous_disc} we get
\begin{align*}
\|\theta_n\| &\leq  \|\rho_n\| + \tau \sum_{j=1}^n t_{n-j+1}^{-1}\|\rho_{j}\|\\&\leq C(H+k^{d/2}\mu^k)^2\max_{1\leq j\leq n}\|\mathcal{A}_hU_j\|(1+\tau\sum_{j=1}^nt_{n-j+1}^{-1}),
\end{align*}
where the last sum can be bounded by
\begin{align*}
\tau \sum_{j=1}^nt^{-1}_{n-j+1}\leq 1 + \log \frac{t_n}{\tau}.
\end{align*} 
It remains to bound $\|\mathcal{A}_h U_n\|$. We have $\mathcal{A}_hU_n=P_hf_n-\bar\partial_tU_n$ and Lemma~\ref{femreg} gives
\begin{align*}
\|\mathcal{A}_hU_j\| \leq \|f_j\| + \|\bar\partial_t U_j\| \leq C(\|f\|_{L_\infty(L_2)} + \|\dot{f}\|_{L_\infty(L_2)}),
\end{align*}
which completes the proof.
\end{proof}

\begin{proof}[Proof of Theorem~\ref{femconv}]
The result follows from Lemma~\ref{femconv1} and Lemma~\ref{femconv2} by rewriting $U_n=U_{n,1}+U_{n,2}$, where $U_{n,1}$ is the solution to the homogeneous problem and $U_{n,2}$ the solution to the inhomogeneous problem with vanishing initial data. 
\end{proof}

\begin{myremark}
We note that the choice of $k$ and the size of $\mu$ determine the rate of the convergence. In general, to achieve optimal order convergence rate, $k$ should be chosen proportional to $\log(H^{-1})$, i.e. $k =c \log(H^{-1})$. With this choice of $k$ we have $\|U^\mathrm{ms}_{k,n}-U_n\| \leq C(1+\log n)H^2t_n^{-1}$.
\end{myremark}

\section{The semilinear parabolic equation}\label{sec:semilin}
In this section we discuss how the above techniques can be extended to a semilinear parabolic problem with multiscale diffusion coefficient. 
\subsection{Problem formulation}
We are interested in equations of the form 
\begin{alignat}{2}
\dot{u} - \nabla \cdot (A\nabla u) &= f(u), & \quad  &\text{in } \Omega \times (0,T], \notag \\
u &= 0, & &\text{on } \partial \Omega \times (0,T], \label{semilin} \\
u(\cdot,0) &= u_0, & &\text{in } \Omega, \notag
\end{alignat}
where $f:\mathbb{R}\rightarrow \mathbb{R}$ is twice continuously differentiable and $\Omega$ is a polygonal/polyhedral boundary in $\mathbb{R}^d$, for $d\leq 3$. For $d=2,3$, $f$ is assumed to fulfill the growth condition 
\begin{align}\label{growth}
|f^{(l)}(\xi)|\leq C(1+|\xi|^{\delta + 1 -l}), \quad \text{for } l=1,2, 
\end{align}
where $\delta=2$ if $d=3$ and $\delta \in [1,\infty)$ if $d=2$. Furthermore, we assume that the diffusion $A$ fulfills assumption (A1) and $u_0\in V$.

\begin{myex}
The Allen-Cahn equation $\dot{u} - \nabla \cdot (A \nabla u) = -(u^3-u)$ fulfills the assumption \eqref{growth}.
\end{myex}

Define the ball $B_R := \{v \in V: \|v\|_{H^1} \leq R\}$. Using H\"{o}lder and Sobolev inequalities the following lemma can be proved, see \cite{larsson06}.
\begin{mylemma}\label{growthlemma}
If $f$ fulfills assumption \eqref{growth} and $u,v \in B_R$, then
\begin{align*}
\|f(u)\| \leq C, \
\|f'(u)z\|_{H^{-1}} \leq C\|z\|,\
\|f'(u)z\| \leq C\|z\|_{H^1},\
\|f''(u)z\|_{H^{-1}} \leq C\|z\|, \
\end{align*}
and
\begin{align*}
\|f(u)-f(v)\|_{H^{-1}} \leq C\|u-v\|,
\end{align*}
where C is a constant depending on $R$.
\end{mylemma}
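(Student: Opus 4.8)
The plan is to reduce every bound to the Sobolev embedding $H^1(\Omega)\hookrightarrow L_p(\Omega)$ combined with Hölder's inequality, exploiting that $u,v\in B_R$ means $\|u\|_{H^1},\|v\|_{H^1}\leq R$. The critical ingredient is the embedding exponent: for $d=3$ one has $H^1\hookrightarrow L_p$ only for $p\leq 6$, for $d=2$ for every $p<\infty$, and for $d=1$ the embedding $H^1\hookrightarrow L_\infty$ makes all estimates trivial, so the growth condition \eqref{growth} is genuinely needed only for $d=2,3$. Throughout, constants absorb $R$ and $|\Omega|$.

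First I would record a pointwise bound on $f$ itself: integrating \eqref{growth} with $l=1$ from $0$ gives $|f(\xi)|\leq |f(0)|+\int_0^{|\xi|}C(1+s^\delta)\,ds\leq C(1+|\xi|^{\delta+1})$. For the first estimate, $\|f(u)\|^2\leq C\int_\Omega(1+|u|^{\delta+1})^2\leq C(1+\|u\|_{L_{2(\delta+1)}}^{2(\delta+1)})$, and since $2(\delta+1)=6$ when $d=3$ (exactly the critical exponent) and $2(\delta+1)<\infty$ when $d=2$, the Sobolev embedding bounds the right-hand side by $C(1+R^{2(\delta+1)})$. For the third estimate I would use $|f'(\xi)|\leq C(1+|\xi|^\delta)$ and split with Hölder: $\|f'(u)z\|^2\leq C\int(1+|u|^{2\delta})|z|^2\leq C(1+\||u|^{2\delta}\|_{L_p})\|z\|_{L_{2q}}^2$ with conjugate exponents $p,q$ chosen so that $2\delta p$ and $2q$ both fall at or below the critical exponent; in $d=3$ the choice $q=3$, $p=3/2$ gives $2\delta p=6$ and $2q=6$, whence $\|f'(u)z\|\leq C\|z\|_{H^1}$.

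The two $H^{-1}$ estimates I would handle by duality, $\|w\|_{H^{-1}}=\sup_{0\neq\phi\in V}(w,\phi)/\|\phi\|_{H^1}$. For $\|f'(u)z\|_{H^{-1}}$, I would estimate $\int|f'(u)||z||\phi|\leq C\int(1+|u|^\delta)|z||\phi|$ by a three-factor Hölder inequality placing $z$ in $L_2$, $\phi$ in $L_s$ with $s\leq 6$, and $|u|^\delta$ in the conjugate space; in $d=3$, $s=6$ forces $|u|^\delta\in L_3$, and $\|u\|_{L_{3\delta}}^\delta=\|u\|_{L_6}^2\leq CR^2$, giving $\|f'(u)z\|_{H^{-1}}\leq C\|z\|$. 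The bound for $\|f''(u)z\|_{H^{-1}}$ is identical with the lower growth $|f''(\xi)|\leq C(1+|\xi|^{\delta-1})$, which only makes the Hölder bookkeeping easier. Finally, for the Lipschitz estimate I would write $f(u)-f(v)=f'(w)(u-v)$ with $w=v+\theta(u-v)$, $\theta\in[0,1]$ pointwise, note $|w|^\delta\leq C(|u|^\delta+|v|^\delta)$, and repeat the $H^{-1}$ argument above with $z=u-v$; since $\|u\|_{H^1},\|v\|_{H^1}\leq R$ this yields $\|f(u)-f(v)\|_{H^{-1}}\leq C\|u-v\|$.

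The main obstacle is purely the exponent bookkeeping at the critical case $d=3$, $\delta=2$: there the Sobolev embedding is used at its endpoint $L_6$, so the Hölder exponents must be chosen with no slack and one must check that every factor lands at or below $6$. Once those exponents are pinned down the estimates are routine; the $d=2$ case carries free room since every $L_p$, $p<\infty$, is available, and $d=1$ is immediate from $H^1\hookrightarrow L_\infty$.
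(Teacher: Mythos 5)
Your proof is correct, and it follows exactly the route the paper indicates: the paper gives no written proof but states that the lemma follows from H\"older and Sobolev inequalities (citing \cite{larsson06}), which is precisely the duality-plus-embedding argument you carry out, with the exponent bookkeeping at the critical case $d=3$, $\delta=2$, $H^1\hookrightarrow L_6$ handled correctly.
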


From \eqref{semilin} we derive the variational form; find $u(t)\in V$ such that 
\begin{align}\label{semilinvar}
(\dot{u},v) + (A\nabla u,\nabla v) = (f(u),v), \quad \forall v \in V,
\end{align}
and $u(0)=u_0$. For this problem local existence of a solution can be derived given that the initial data $u_0 \in V$, see \cite{larsson06}.
\begin{mythm}\label{semilinreg}
Assume that (A1) and \eqref{growth} holds. Then, for $u_0\in B_R$, there exist $T_0 = T_0(R)$ and $c>0$, such that \eqref{semilinvar} has a unique solution $u \in C(0,T_0;V)$ and $\|u\|_{L_\infty(0,T_0;V)}\leq cR$. 
\end{mythm}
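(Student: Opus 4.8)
The plan is to recast the variational equation \eqref{semilinvar} as a fixed-point problem via Duhamel's principle and to invoke the Banach fixed-point theorem on a ball in $C(0,T_0;V)$. Let $E(t)=e^{-t\mathcal{A}}$ be the analytic semigroup generated by $-\mathcal{A}$, where $\mathcal{A}=-\nabla\cdot A\nabla$ is self-adjoint and positive definite on $L_2$ with spectrum bounded away from zero by a constant depending only on $\alpha$ and $\Omega$. A continuous function $u$ solves \eqref{semilinvar} precisely when it is a fixed point of
\begin{align*}
(\Phi w)(t) = E(t)u_0 + \int_0^t E(t-s)\,f(w(s))\,\mathrm{d}s.
\end{align*}
The two tools I would use are the smoothing estimates $\enorm{E(t)v}\leq Ct^{-1/2}\|v\|$ and $\enorm{E(t)v}\leq\enorm{v}$, which follow from spectral calculus for $\mathcal{A}$ exactly as in Lemma~\ref{esthomogeneous_disc} (with $C$ depending only on $\alpha$, $\beta$, and $C_{1/2}=\sup_{s>0}s^{1/2}e^{-s}$, hence not on the variations of $A$), together with the nonlinear bounds of Lemma~\ref{growthlemma}. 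Throughout I use the equivalence of $\enorm{\cdot}$ and $\|\cdot\|_{H^1}$ on $V$.

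First I would fix $c$ from the linear part: since $\enorm{E(t)u_0}\leq\enorm{u_0}\leq CR$ for $u_0\in B_R$, choose $c$ with $CR\leq\tfrac12 cR$, and work on the complete metric space $\mathcal{B}=\{w\in C(0,T_0;V):\|w\|_{L_\infty(0,T_0;V)}\leq cR\}$ under the $L_\infty(V)$-metric. For the self-mapping property I would bound the Duhamel term using $\|f(w(s))\|\leq C$ for $w(s)\in B_{cR}$ from Lemma~\ref{growthlemma} and the smoothing estimate,
\begin{align*}
\Big\|\int_0^t E(t-s)f(w(s))\,\mathrm{d}s\Big\|_{H^1} \leq C\int_0^t (t-s)^{-1/2}\,\mathrm{d}s = 2Ct^{1/2} \leq 2CT_0^{1/2}.
\end{align*}
The decisive point is that the singularity $(t-s)^{-1/2}$ is integrable, so this term is $\mathcal{O}(T_0^{1/2})$; choosing $T_0=T_0(R)$ so that $2CT_0^{1/2}\leq\tfrac12 cR$ yields $\Phi(\mathcal{B})\subseteq\mathcal{B}$.

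For the contraction, convexity of $B_{cR}$ together with the mean-value theorem and the bound $\|f'(u)z\|\leq C\|z\|_{H^1}$ from Lemma~\ref{growthlemma} gives $\|f(w_1)-f(w_2)\|\leq C\|w_1-w_2\|_{H^1}$ for $w_1,w_2\in\mathcal{B}$, whence
\begin{align*}
\|(\Phi w_1)(t)-(\Phi w_2)(t)\|_{H^1} \leq C\int_0^t (t-s)^{-1/2}\|w_1(s)-w_2(s)\|_{H^1}\,\mathrm{d}s \leq 2CT_0^{1/2}\|w_1-w_2\|_{L_\infty(0,T_0;V)}.
\end{align*}
After possibly shrinking $T_0$ so that $2CT_0^{1/2}<1$, the map $\Phi$ is a contraction on $\mathcal{B}$, and Banach's theorem produces a unique fixed point $u\in\mathcal{B}$, giving both the mild solution and the bound $\|u\|_{L_\infty(0,T_0;V)}\leq cR$. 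Strong continuity of $E(t)$ and continuity of the Duhamel integral give $u\in C(0,T_0;V)$; since then $f(u)\in C(0,T_0;L_2)$ by \eqref{growth}, standard analytic-semigroup regularity upgrades $u$ to a genuine solution of \eqref{semilinvar}, and uniqueness among all such solutions follows from a Gronwall argument based on the same Lipschitz estimate.

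I expect the main obstacle to be the careful ordering of the constants rather than any single estimate: the radius $cR$ of the ball enters the constant $C=C(cR)$ in $\|f(w)\|\leq C$ and in the Lipschitz bound from Lemma~\ref{growthlemma}, which in turn dictates how small $T_0$ must be. One must first fix $c$ from the stability of $E(t)$, then treat $C(cR)$ as frozen, and only then choose $T_0=T_0(R)$ small enough to close both the self-mapping and the contraction simultaneously; this circular dependence is precisely what restricts the conclusion to a local existence time $T_0$ depending on $R$. The smoothing constants, by contrast, come solely from the spectral bounds of $\mathcal{A}$ and therefore depend only on $\alpha$ and $\beta$, consistent with the paper's convention.
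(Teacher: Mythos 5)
Your proposal is correct, and it is essentially the paper's own route: the paper does not prove Theorem~\ref{semilinreg} itself but defers to \cite{larsson06}, where the argument is exactly your Duhamel representation plus Banach fixed point on a ball in $C(0,T_0;V)$, using the analytic-semigroup smoothing bound $\enorm{E(t)v}\leq Ct^{-1/2}\|v\|$ and the nonlinear estimates of Lemma~\ref{growthlemma}. Your handling of the constant ordering (fix $c$ from stability of $E(t)$, freeze the Lemma~\ref{growthlemma} constants on $B_{cR}$, then shrink $T_0=T_0(R)$) and of the Lipschitz bound via convex combinations is the standard and correct way to close the argument.
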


For the Allen-Cahn equation it is possible to find an a priori global bound of $u$. This means that for any time $T$ there exists $R$ such that if $u$ is a solution then $\|u(t)\|_{L_\infty(H^1)}\leq R$ for $t\in[0,T]$. Thus we can apply the local existence theorem repeatedly to attain global existence, see \cite{larsson06}. 

\subsection{Numerical approximation}
The assumptions and definitions of the families of triangulations $\{\mathcal{T}_h\}_{h>0}$ and $\{\mathcal{T}_H\}_{H>h}$ and the corresponding spaces $V_H$ and $V_h$ remain the same as in Section~\ref{sec:LOD}. For the discretization in time we use a uniform time discretization given by
\begin{align}\label{timedisc2}
0=t_0<t_1<...<t_N=T_0, \text{ where } t_n-t_{n-1} = \tau, 
\end{align}
where $T_0$ is given from Theorem~\ref{semilinreg}. With these discrete spaces we consider the semi-implicit backward Euler scheme where $U_n \in V_h$ satisfies
\begin{align}\label{semilinfemh}
(\bar\partial_t U_n,v) + (A\nabla U_n,\nabla v) = (f(U_{n-1}),v), \quad \forall v \in V_h,
\end{align}
for $n=1,...,N$ where $U_0 \in V_h$ is an approximation of $u_0$. It is proven in \cite{larsson92} that this scheme satisfies the bound 
\begin{align*}
\|U_n-u(t_n)\|\leq Ct_{n}^{-1/2}(h^2+\tau),
\end{align*}
if we choose, for instance, $U_0=P_hu_0$, where $P_h$ denotes the $L_2$-projection onto $V_h$. Note that $C$ in this bound depends on the variations of $A$.

The following theorem gives some regularity estimates of the solution to \eqref{semilinfemh}.

\begin{mythm}\label{semilinfemreg}
Assume that (A1) and \eqref{growth} holds. Then, for $U_0\in B_R$, there exist $T_0 = T_0(R)$ and $c>0$ such that \eqref{semilinfemh} has a unique solution $U_n \in V_h$, for $1\leq n\leq N$, and $\max_{1\leq n \leq N}\|U_n\|_{H^1}\leq cR$. Moreover, the following bounds hold 
\begin{align*}
\|\bar\partial_t U_n\|\leq Ct_n^{-1/2},\ n\geq 1, \quad \enorm{\bar\partial_t U_n}\leq Ct_n^{-1},\ n\geq 1, \quad  \|\bar\partial_t\bar\partial_t U_n\| \leq Ct_n^{-3/2},\ n\geq 2,
\end{align*}
where $C$ depends on $\alpha$, $T_0$, and $R$, but not on the variations of $A$.
\end{mythm}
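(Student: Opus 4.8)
The existence, uniqueness, and the uniform $H^1$-bound $\max_n\|U_n\|_{H^1}\le cR$ on a short interval $[0,T_0]$ follow the standard fixed-point/continuation argument: assuming inductively that $U_{n-1}\in B_{cR}$, the scheme \eqref{semilinfemh} is a \emph{linear} elliptic problem for $U_n$ (the nonlinearity is evaluated at the previous time level), so $U_n$ exists and is unique, and Lemma~\ref{growthlemma} gives $\|f(U_{n-1})\|\le C$; testing with $U_n$ and using coercivity of $a(\cdot,\cdot)$ then yields a bound $\|U_n\|_{H^1}\le cR$ provided $T_0=T_0(R)$ is chosen small enough that the accumulated contribution $\sum\tau\|f(U_{j-1})\|$ stays controlled. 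This part I would treat quickly, citing the analogous argument in \cite{larsson06}.

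The substantive work is the three time-derivative bounds, and the natural strategy is to mirror the linear regularity result Theorem~\ref{femreg}, treating the nonlinear forcing as a perturbation. First I would form the difference equation: subtracting \eqref{semilinfemh} at levels $n$ and $n-1$ gives
\begin{align*}
(\bar\partial_t\bar\partial_t U_n,v)+a(\bar\partial_t U_n,v)=(\bar\partial_t f(U_{n-1}),v),\quad\forall v\in V_h,\ n\ge 2,
\end{align*}
where $\bar\partial_t f(U_{n-1})=\tau^{-1}(f(U_{n-1})-f(U_{n-2}))$. By Lemma~\ref{growthlemma} and a mean-value/Taylor expansion this discrete time-difference of $f$ is controlled by $\|\bar\partial_t U_{n-1}\|$ in a weak norm. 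The plan is then to apply the smoothing estimates of Lemma~\ref{esthomogeneous_disc} (or \cite[Lemma~7.3]{Thomee2006}) via Duhamel's principle, exactly as in the nonsmooth-data analysis, to bootstrap: $\|\bar\partial_t U_n\|\le Ct_n^{-1/2}$ first (using $\|f(U_0)\|\le C$ as data and the $t_n^{-1/2}$ smoothing of the homogeneous solution operator), then the energy-norm bound $\enorm{\bar\partial_t U_n}\le Ct_n^{-1}$ by an additional half-power of smoothing, and finally $\|\bar\partial_t\bar\partial_t U_n\|\le Ct_n^{-3/2}$ by differencing once more and feeding in the $t_n^{-1/2}$ bound on $\bar\partial_t U$.

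The main obstacle will be handling the nonlinear term $\bar\partial_t f(U_{n-1})$ carefully enough to close the bootstrap without losing the sharp powers of $t_n$. The difficulty is twofold: one must expand $f(U_{n-1})-f(U_{n-2})=f'(\xi)(U_{n-1}-U_{n-2})$ and control $f'(\xi)$ using the growth condition \eqref{growth} together with the already-established uniform $H^1$-bound so that $\xi$ stays in a fixed ball $B_R$; and one must estimate the resulting term in $H^{-1}$ via Lemma~\ref{growthlemma} (the bound $\|f'(u)z\|_{H^{-1}}\le C\|z\|$ is exactly what is needed) so that the weak smoothing $\|(\mathcal{A}_h)^{-1/2}P_h\,g\|\le C\|g\|_{H^{-1}}$ can absorb it and produce the correct temporal singularity. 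Keeping track of the interplay between the $t_n^{-l}$ factors from the discrete smoothing estimates and the half-power gained by measuring the nonlinearity in $H^{-1}$ rather than $L_2$ is the delicate bookkeeping that drives the final powers $-1/2$, $-1$, and $-3/2$; everything else is a discrete adaptation of the continuous argument in \cite{larsson06} and \cite{larsson92}.
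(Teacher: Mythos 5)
Your plan takes a genuinely different route from the paper. The paper proves the key estimate $\|\bar\partial_t\bar\partial_t U_n\|\leq Ct_n^{-3/2}$ (and only that one explicitly) by purely energy-based arguments: it differences the scheme once and twice, tests with $\bar\partial_t\bar\partial_t U_n$, multiplies the resulting telescoping inequalities by time weights $\tau t_n^3$ and $\tau t_n^4$, sums, and closes with the \emph{classical} discrete Gr\"onwall lemma. No solution-operator smoothing and no Duhamel representation appear anywhere in that proof. Your route --- Duhamel's principle, discrete smoothing estimates for the solution operator, the $H^{-1}$ trick from Lemma~\ref{growthlemma}, and a bootstrap closed by the nonlinear Gr\"onwall lemma (Lemma~\ref{gronwall}) --- is the classical Larsson/Thom\'ee nonsmooth-data strategy, and it mirrors how the paper itself later proves Theorem~\ref{fullyconv}. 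It is viable in principle, and your bootstrap ordering ($t_n^{-1/2}$, then $t_n^{-1}$, then $t_n^{-3/2}$) is the right one; note only that Lemma~\ref{esthomogeneous_disc} is stated for $\mathcal{A}^\mathrm{ms}_k$, so you would need its (identical) analogue for $\mathcal{A}_h$.

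There are, however, two concrete points where your sketch as written would not close. First, for the third bound the forcing is the \emph{second} difference $\bar\partial_t\bar\partial_t f(U_{n-1})$, not $\bar\partial_t f$. Writing $f(U_j)-f(U_{j-1})=f'(\xi_j)(U_j-U_{j-1})$ at two consecutive levels produces the term $\tau^{-2}(f'(\xi_1)-f'(\xi_2))(U_n-U_{n-1})$, which is \emph{quadratic} in $\bar\partial_t U$; the bound $\|f'(u)z\|_{H^{-1}}\leq C\|z\|$ that you invoke cannot absorb it. The paper handles exactly this term via the pointwise estimate $|\xi_1-\xi_2|\leq |U_n-U_{n-1}|+|U_{n-1}-U_{n-2}|$, the Sobolev embedding $H^1\hookrightarrow L_4$, and crucially the \emph{gradient} bound $\enorm{\bar\partial_t U_j}\leq Ct_j^{-1}$, giving $\|(\bar\partial_t U_j)^2\|\leq C\|\bar\partial_t U_j\|_{H^1}^2\leq Ct_j^{-2}$; your plan to feed in only the $t_n^{-1/2}$ bound on $\bar\partial_t U$ is insufficient here, and this quadratic term is precisely where most of the paper's effort goes. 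Second, the discrete smoothing sums in your route, e.g.\ $\tau\sum_{j}t_{n-j}^{-1}t_j^{-1/2}$ arising in the step for $\enorm{\bar\partial_t U_n}\leq Ct_n^{-1}$, produce logarithmic factors $\log(t_n/\tau)$ unless one splits the sum at $n_2=\lfloor n/2\rfloor$ and sums by parts (as the paper does in the proof of Theorem~\ref{fullyconv}); without that device the claimed $\tau$-uniform powers of $t_n$ are not attained. Both obstacles are surmountable, but they are the actual content of the proof rather than routine bookkeeping, and the paper's weighted-energy approach is arguably chosen precisely because it sidesteps the second one entirely.
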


\begin{proof}
We only prove the estimate $\|\bar\partial_t\bar\partial_t U_n\| \leq Ct_n^{-3/2}$ here. The other two follow by similar arguments. 

From \eqref{semilinfemh} we get
\begin{align}
(\bar\partial_t\bar\partial_t U_n,v) + a(\bar\partial_t U_n,v) &= (\bar\partial_t f(U_{n-1}),v),\quad \forall v \in V_h, \ n\geq 2, \label{diff1} \\
(\bar\partial_t^{(3)}U_n,v) + a(\bar\partial_t\bar\partial_t U_n,v) &= (\bar\partial_t\bar\partial_t f(U_{n-1}),v), \quad \forall v\in V_h, \ n \geq 3. \label{diff2}
\end{align}
Choosing $v = \bar\partial_t\bar\partial_t U_n$ in \eqref{diff2} gives
\begin{align*}
\frac{1}{\tau}\|\bar\partial_t\bar\partial_tU_n\|^2 - \frac{1}{\tau}(\bar\partial_t\bar\partial_t U_{n-1},\bar\partial_t\bar\partial_t U_n) + \enorm{\bar\partial_t\bar\partial_t U_n}^2= (\bar\partial_t\bar\partial_t f(U_{n-1}),\bar\partial_t\bar\partial_t U_{n}),
\end{align*}
which gives the bound
\begin{align}\label{bound2}
\|\bar\partial_t\bar\partial_tU_n\|^2 - \|\bar\partial_t\bar\partial_t U_{n-1}\|^2 \leq C\tau\|\bar\partial_t\bar\partial_t f(U_{n-1})\|_{H^{-1}}.
\end{align}
Using Lemma~\ref{growthlemma} we have for $\xi_j \in (\min\{U_{n-j},U_{n-(j-1)}\},\max\{U_{n-j},U_{n-(j-1)}\})$
\begin{align*}
\|\bar\partial_t\bar\partial_t f(U_n)\|_{H^{-1}} &= \frac{1}{\tau^2}\|f'(\xi_1)(U_{n}-U_{n-1}) - f'(\xi_2)(U_{n-1}-U_{n-2})\|_{H^{-1}}\\
&\leq\frac{1}{\tau^2}\|(f'(\xi_1)-f'(\xi_2))(U_{n}-U_{n-1})\|_{H^{-1}} \\& \quad+\frac{1}{\tau^2}\|f'(\xi_2)(U_{n}-2U_{n-1}+U_{n-2})\|_{H^{-1}} \\&\leq \frac{1}{\tau^2} \|(\xi_1-\xi_2)(U_{n}-U_{n-1})\| + C\|\bar\partial_t\bar\partial_t U_{n}\|,
\end{align*}
Note that $|\xi_1-\xi_2\|\leq \|U_{n-2}-U_{n-1}\| + \|U_{n-1}-U_{n}\|$. By using Sobolev embeddings we get
\begin{align*}
\frac{1}{\tau^2} \|(\xi_1-\xi_2)(U_{n}-U_{n-1})\| &\leq \max_{n-1\leq j \leq n} 2\|(\bar\partial_tU_{j})^2\| \leq \max_{n-1\leq j \leq n} 2\|\bar\partial_t U_{j}\|^2_{L_4} \\&\leq C\max_{n-1\leq j \leq n}\|\bar\partial_tU_{j}\|^2_{H^1} \leq Ct^2_{n-1} \leq Ct^2_{n},
\end{align*}
where we recall the bounds $\frac{1}{2}t_j\leq t_{j-1}\leq t_j$ for $j\geq 2$. Multiplying by $\tau t_n^4$ in \eqref{bound2} and summing over $n$ gives
\begin{align*}
t_n^4\|\bar\partial_t\bar\partial_tU_n\|^2 &\leq  t^4_2\|\bar\partial_t\bar\partial_tU_2\|^2 + \sum_{j=3}^n(\tau t_j^4\|\bar\partial_t\bar\partial_t f(U_{n-1})\|^2_{H^{-1}} + (t_j^4-t_{j-1}^4)\|\bar\partial_t\bar\partial_tU_{j-1}\|^2)\\
&\leq t^4_2\|\bar\partial_t\bar\partial_tU_2\|^2 + C \sum_{j=3}^n \tau\big(t_j^4\|\bar\partial_t\bar\partial_tU_{j-1}\|^2 +  t_j^4t^{-4}_{j-1} +  t_{j-1}^3\|\bar\partial_t\bar\partial_tU_{j-1}\|^2\big) \\
& \leq t^4_2\|\bar\partial_t\bar\partial_tU_2\|^2 + Ct_n + C\sum_{j=3}^n \tau \big(t_{j-1}^4\|\bar\partial_t\bar\partial_tU_{j-1}\|^2 +t_{j-1}^3\|\bar\partial_t\bar\partial_tU_{j-1}\|^2\big),
\end{align*}
for $n \geq 3$. Using $\|\bar\partial_tU_j\|\leq Ct_j^{-1/2}$ for $j\geq 1$ we get
\begin{align*}
 t^4_2\|\bar\partial_t\bar\partial_tU_2\|^2 \leq C\tau^2(\|\bar\partial_tU_2\|^2+\|\bar\partial_tU_1\|^2) \leq C \tau^2(t_2^{-1} + t_1^{-1}) \leq C\tau.
\end{align*}
Now, to bound $\sum_{j=2}^n t_j^3\|\bar\partial_t\bar\partial_tU_j\|$, we choose $v=\bar\partial_t\bar\partial_t U_n$ in \eqref{diff1} to derive
\begin{align}
\|\bar\partial_t\bar\partial_t U_n\|^2 + \frac{1}{\tau}\enorm{\bar\partial_t U_n}^2 - \frac{1}{\tau}\enorm{\bar\partial_t U_{n-1}}^2 &\leq \|\bar\partial_t f(U_{n-1})\|^2. \label{bound1}
\end{align}
and with $\xi_j$ as above, we get
\begin{align*}
\|\bar\partial_t f(U_{n-1})\| = \|f'(\xi_2)\bar\partial_t U_{n-1} \| \leq C\enorm{\bar\partial_t U_{n-1}} \leq Ct^{-1}_{n-1},
\end{align*}
where we used Lemma~\ref{growthlemma} and $\enorm{\bar\partial_t U_j} \leq Ct_{j}^{-1}$ for $j \geq 1$. Multiplying \eqref{bound1} with $\tau t_n^3$ and summing over $n$ gives
\begin{align*}
\sum_{j=2}^n\tau t_j^3\|\bar\partial_t\bar\partial_t U_j\|^2 + t_n^3\enorm{\bar\partial_t U_n}^2 &\leq C \sum_{j=2}^n (\tau t^3_jt^{-2}_{j-1} + (t^3_j-t^3_{j-1})\enorm{\bar\partial_t U_{j-1}}^2) \\ & \quad + t^3_{1}\enorm{\bar\partial_t U_{1}}^2 \\ 
&\leq C \sum_{j=2}^n (\tau t_j + \tau t^2_{j-1}\enorm{\bar\partial_t U_{j-1}}^2) + t^3_{1}\enorm{\bar\partial_t U_{1}}^2 . 
\end{align*}
Using $\enorm{\bar\partial_t U_j}\leq Ct_j^{-1}$ for $j\geq 1$ we get
\begin{align*}
\sum_{j=2}^n\tau t_j^3\|\bar\partial_t\bar\partial_t U_j\|^2 \leq C(t_n^2 + t_n + t_1) \leq Ct_n, 
\end{align*}
where $C$ now depends on $t_n\leq T$. So we have proved
\begin{align*}
t_n^4\|\bar\partial_t\bar\partial_tU_n\|^2 & \leq C\sum_{j=3}^n \tau t_{j-1}^4\|\bar\partial_t\bar\partial_tU_{j-1}\|^2 + Ct_n + \tau \\ &\leq C\sum_{j=2}^{n-1} \tau t_{j-1}^4\|\bar\partial_t\bar\partial_tU_{j-1}\|^2 + Ct_{n+1} \leq C\sum_{j=2}^{n-1} \tau t_{j-1}^4\|\bar\partial_t\bar\partial_tU_{j-1}\|^2 + Ct_n.
\end{align*}
Applying the classical discrete Gr\"{o}nwall's lemma gives 
\begin{align*}
t_n^4\|\bar\partial_t\bar\partial_tU_j\|^2 \leq Ct_n,
\end{align*}
which proves $\|\bar\partial_t\bar\partial_tU_j\|\leq Ct_n^{-3/2}$ for $n\geq 3$. For $n=2$ we proved 
\begin{align*}
t^4_2\|\bar\partial_t\bar\partial_tU_2\|^2 \leq C\tau \leq Ct_2,
\end{align*}
which completes the proof.
\end{proof}

We use the same GFEM space as in Section~\ref{sec:LOD}, that is, $V^\mathrm{ms} = V_H - R^\mathrm{f}(V_H)$ and the localized version $V^\mathrm{ms}_k = V_H - R^\mathrm{f}_k(V_H)$. Furthermore, for the completely discrete scheme, we consider the time discretization defined in \eqref{timedisc2} and the linearized backward Euler method thus reads; find $U^\mathrm{ms}_{k,n} \in V^\mathrm{ms}$ such that $U^\mathrm{ms}_{k,0} = P^\mathrm{ms}_k U_0$ and 
\begin{align}\label{semilinfully}
(\bar\partial_tU^\mathrm{ms}_{k,n}, v) + a(U^\mathrm{ms}_{k,n},v) = (f(U^\mathrm{ms}_{k,n-1}),v), 
\end{align}
for $n=1,...,N$ where $P^\mathrm{ms}_k$ is the $L_2$-projection onto $V^\mathrm{ms}_k$.

To derive an error estimates we represent the solution to \eqref{semilinfully} by using Duhamel's principle. Note that $U^\mathrm{ms}_{k,n}$ is the solution to the equation
\begin{align*}
\bar{\partial}_t U^\mathrm{ms}_{k,n} + \mathcal{A}^\mathrm{ms}_kU^\mathrm{ms}_{k,n} = P^\mathrm{ms}_kf(U^\mathrm{ms}_{k,n-1}),
\end{align*}
and by Duhamel's principle we get
\begin{align*}
U^\mathrm{ms}_{k,n} = E^\mathrm{ms}_{k,n}U^\mathrm{ms}_{k,0} + \tau\sum_{j=1}^n E^\mathrm{ms}_{k,n-j-1}P^\mathrm{ms}_kf(U^\mathrm{ms}_{k,j-1}).
\end{align*}

\subsection{Error analysis}\label{sec:errorsemilinfully}
For the error analysis we need the following generalized discrete Gr\"{o}nwall lemma, see, e.g., \cite{larsson06}.
\begin{mylemma}\label{gronwall}
Let $A,B\geq 0$, $\gamma_1,\gamma_2 >0$, $0\leq t_0<t_n\leq T$, and $0\leq \varphi_n \leq R$. If
\begin{align*}
\varphi_n \leq At_n^{-1+\gamma_1} + B\tau\sum_{j=1}^{n-1} t_{n-j+1}^{-1+\gamma_2}\varphi_j,
\end{align*}
then there is a constant $C$ depending on $B$, $\gamma_1$, $\gamma_2$, and, $T$, such that,
\begin{align*}
\varphi_n \leq At_n^{-1+\gamma_1}.
\end{align*} 
\end{mylemma}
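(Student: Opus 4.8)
The plan is to treat the hypothesis as a discrete singular Volterra inequality and to remove the weak singularity of the kernel by iteration, exactly as in the proof of Henry's continuous singular Gr\"onwall lemma. Writing $t_{n-j+1}=t_n-t_{j-1}$ (which holds on the uniform grid), the assumption reads
\begin{align*}
\varphi_n \leq At_n^{-1+\gamma_1} + B\tau\sum_{j=1}^{n-1}(t_n-t_{j-1})^{-1+\gamma_2}\varphi_j,
\end{align*}
so the kernel is of discrete convolution type with an integrable singularity of order $-1+\gamma_2$, and the forcing $At_n^{-1+\gamma_1}$ has an integrable singularity of order $-1+\gamma_1$. The strategy is to convolve the kernel with itself enough times to make it bounded, then close with a non-singular discrete Gr\"onwall argument.

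The engine of the whole argument is a discrete Beta-function estimate: there is a constant $C=C(\gamma_1,\gamma_2,T)$, independent of $n$ and $\tau$, with
\begin{align*}
\tau\sum_{j=1}^{n-1}(t_n-t_{j-1})^{-1+\gamma_2}\,t_j^{-1+\gamma_1} \leq C\,t_n^{-1+\gamma_1+\gamma_2}.
\end{align*}
This is the discrete analogue of $\int_0^{t}(t-s)^{-1+\gamma_2}s^{-1+\gamma_1}\,ds = B(\gamma_2,\gamma_1)\,t^{-1+\gamma_1+\gamma_2}$. I would prove it by splitting the sum at $j\approx n/2$: on $j\leq n/2$ the factor $(t_n-t_{j-1})^{-1+\gamma_2}$ is comparable to $t_n^{-1+\gamma_2}$ while $\tau\sum_j t_j^{-1+\gamma_1}$ is comparable to $t_n^{\gamma_1}$, and on $j>n/2$ the factor $t_j^{-1+\gamma_1}$ is comparable to $t_n^{-1+\gamma_1}$ while the remaining sum is comparable to $t_n^{\gamma_2}$. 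Comparing each discrete sum with the corresponding monotone integral controls the two endpoint singularities (both integrable since $\gamma_1,\gamma_2>0$) uniformly in $\tau$.

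Next I would iterate the hypothesis, substituting the bound for $\varphi_j$ back into the sum and applying the Beta estimate to each inner summation. After $m$ steps this produces
\begin{align*}
\varphi_n \leq A\sum_{l=0}^{m-1} C_l B^l\, t_n^{-1+\gamma_1+l\gamma_2} + C_m B^m\,\tau\sum_{i=1}^{n-1}(t_n-t_{i-1})^{-1+m\gamma_2}\varphi_i,
\end{align*}
where each self-convolution of the kernel raises its exponent by $\gamma_2$ and contributes a finite Beta constant. Choosing $m$ to be the smallest integer with $m\gamma_2\geq 1$ renders the remaining kernel $(t_n-t_{i-1})^{-1+m\gamma_2}$ bounded by $T^{-1+m\gamma_2}$, while bounding each factor $t_n^{l\gamma_2}\leq T^{l\gamma_2}$ collapses the finite forcing sum into $CA\,t_n^{-1+\gamma_1}$; since $m$ is fixed, only finitely many finite constants $C_l$ occur.

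The iterated inequality now has the non-singular form $\varphi_n \leq CA\,t_n^{-1+\gamma_1} + C'\tau\sum_{i=1}^{n-1}\varphi_i$, to which the classical bounded-kernel discrete Gr\"onwall lemma applies, yielding $\varphi_n \leq CA\,t_n^{-1+\gamma_1} + C''A\,\tau\sum_{j=1}^{n-1}t_j^{-1+\gamma_1}$ after absorbing $e^{C'(t_n-t_j)}\leq e^{C'T}$. The final observation $\tau\sum_{j=1}^{n-1}t_j^{-1+\gamma_1}\leq C\,t_n^{\gamma_1}= C\,t_n\cdot t_n^{-1+\gamma_1}\leq CT\,t_n^{-1+\gamma_1}$ then gives the claimed bound $\varphi_n\leq CA\,t_n^{-1+\gamma_1}$. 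I expect the main obstacle to be the discrete Beta-function estimate: one must treat the two coalescing endpoint singularities (at $j\approx 1$ and $j\approx n$) simultaneously and verify that replacing the Beta integral by a Riemann-type sum does not spoil the constant, so that $C$ depends only on $\gamma_1,\gamma_2,T$ and not on $\tau$ or $n$; the remaining steps are bookkeeping built on this estimate.
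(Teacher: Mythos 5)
The paper itself gives no proof of this lemma---it simply cites \cite{larsson06}---so your argument can only be judged on its own merits, and it is correct: the rewriting $t_{n-j+1}=t_n-t_{j-1}$, the discrete Beta estimate via splitting at $j\approx n/2$, iteration until the kernel exponent $-1+m\gamma_2$ becomes nonnegative, and the final bounded-kernel discrete Gr\"onwall step constitute precisely the standard (Henry-type) proof that the cited reference uses, with the constant depending only on $B,\gamma_1,\gamma_2,T$ as required (and in fact never needing the a priori bound $\varphi_n\leq R$). Two cosmetic remarks: the iteration step actually requires the \emph{shifted} form of your Beta estimate, $\tau\sum_{j=i+1}^{n-1}(t_n-t_{j-1})^{-1+\gamma_2}(t_j-t_{i-1})^{-1+l\gamma_2}\leq C(t_n-t_{i-1})^{-1+(l+1)\gamma_2}$, which on the uniform grid follows from your estimate by reindexing $j\mapsto j-i$ but deserves explicit mention; and your conclusion $\varphi_n\leq CA\,t_n^{-1+\gamma_1}$ is the intended statement---the absence of $C$ in the lemma's displayed conclusion is evidently a typo in the paper.
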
 

\begin{mythm}\label{fullyconv}
For given $R\geq 0$ and $T_0 >0$ let $U_n$ be the solution to \eqref{semilinfemh} and $U^\mathrm{ms}_{k,n}$ be the solution to \eqref{semilinfully}, such that $U_n, U^\mathrm{ms}_{k,n} \in B_R$. Then, for $1\leq n \leq N$,
\begin{align}\label{eq:fullyconv}
\|U^\mathrm{ms}_{k,n}-U_n\| \leq C((H+k^{d/2}\mu^k)^2+\tau)t_n^{-1/2},
\end{align}
where $C$ depends on $\beta$, $\alpha$, $\gamma$, $R$, and $T_0$, but not on the variations of $A$.
\end{mythm}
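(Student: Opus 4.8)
The plan is to mirror the decomposition used in the linear case, splitting the error into a projection part and a finite-element-in-$V^\mathrm{ms}_k$ part, but now carrying along the nonlinear forcing term via Duhamel's principle. Write
\begin{align*}
U^\mathrm{ms}_{k,n}-U_n = (U^\mathrm{ms}_{k,n}-R^\mathrm{ms}_k U_n) + (R^\mathrm{ms}_k U_n - U_n) =: \theta_n + \rho_n.
\end{align*}
The second piece $\rho_n$ is handled immediately by Lemma~\ref{lodprojconv}: $\|\rho_n\|\leq C(H+k^{d/2}\mu^k)^2\|\mathcal{A}_h U_n\|$, and the regularity estimates from Theorem~\ref{semilinfemreg} (specifically $\|\bar\partial_t U_n\|\leq Ct_n^{-1/2}$ together with $\mathcal{A}_hU_n = P_hf(U_{n-1})-\bar\partial_t U_n$ and the boundedness of $\|f(U_{n-1})\|$ from Lemma~\ref{growthlemma}) give $\|\mathcal{A}_h U_n\|\leq Ct_n^{-1/2}$. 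Hence $\|\rho_n\|\leq C(H+k^{d/2}\mu^k)^2 t_n^{-1/2}$, which already has the desired form. I will also need $\|\bar\partial_t\rho_n\|\leq C(H+k^{d/2}\mu^k)^2\|\mathcal{A}_h\bar\partial_t U_n\|$ controlled via $\|\bar\partial_t\bar\partial_t U_n\|\leq Ct_n^{-3/2}$.

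First I would derive the evolution equation for $\theta_n$. Subtracting the projected FEM equation from \eqref{semilinfully} and using $a(R^\mathrm{ms}_k U_n,v)=a(U_n,v)$ for $v\in V^\mathrm{ms}_k$ gives
\begin{align*}
(\bar\partial_t\theta_n,v) + a(\theta_n,v) = (f(U^\mathrm{ms}_{k,n-1})-f(U_{n-1}),v) - (\bar\partial_t\rho_n,v),
\quad \forall v\in V^\mathrm{ms}_k.
\end{align*}
Applying Duhamel's principle with the solution operator $E^\mathrm{ms}_{k,n}$ and then summation by parts on the $\bar\partial_t\rho_n$ term (exactly as in Lemma~\ref{femconv2}) yields
\begin{align*}
\theta_n = \tau\sum_{j=1}^n E^\mathrm{ms}_{k,n-j+1}P^\mathrm{ms}_k\big(f(U^\mathrm{ms}_{k,j-1})-f(U_{j-1})\big) - P^\mathrm{ms}_k\rho_n + \tau\sum_{j=1}^n \bar\partial_t E^\mathrm{ms}_{k,n-j+1}P^\mathrm{ms}_k\rho_j,
\end{align*}
using $\rho_0=0$. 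The last two terms are precisely the linear inhomogeneous contribution and are bounded by $C(1+\log(t_n/\tau))(H+k^{d/2}\mu^k)^2 t_n^{-1/2}$ using Lemma~\ref{lodprojconv} and Lemma~\ref{esthomogeneous_disc}, exactly as in the proof of Lemma~\ref{femconv2}.

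The main obstacle is the nonlinear sum. To estimate it I would use the smoothing estimate from Lemma~\ref{esthomogeneous_disc} in its $H^{-1}$ form together with Lemma~\ref{operatorsA}, namely $\|E^\mathrm{ms}_{k,n}P^\mathrm{ms}_k g\|\leq Ct_n^{-1/2}\|g\|_{H^{-1}}$, and the Lipschitz bound $\|f(U^\mathrm{ms}_{k,j-1})-f(U_{j-1})\|_{H^{-1}}\leq C\|U^\mathrm{ms}_{k,j-1}-U_{j-1}\|$ from Lemma~\ref{growthlemma} (valid since both iterates lie in $B_R$). This gives
\begin{align*}
\Big\|\tau\sum_{j=1}^n E^\mathrm{ms}_{k,n-j+1}P^\mathrm{ms}_k\big(f(U^\mathrm{ms}_{k,j-1})-f(U_{j-1})\big)\Big\| \leq C\tau\sum_{j=1}^n t_{n-j+1}^{-1/2}\|U^\mathrm{ms}_{k,j-1}-U_{j-1}\|.
\end{align*}
Collecting everything, setting $\varphi_n := \|U^\mathrm{ms}_{k,n}-U_n\|$ and $A := C((H+k^{d/2}\mu^k)^2+\tau)$, I obtain an inequality of the form $\varphi_n \leq At_n^{-1/2} + C\tau\sum_{j=1}^{n-1} t_{n-j+1}^{-1/2}\varphi_j$, where the $\tau$ term in $A$ absorbs the logarithmic factor (since $\tau\log(t_n/\tau)\leq C$) and also accounts for any standard time-discretization remainder from the semi-implicit treatment of the nonlinearity. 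This is exactly the hypothesis of the generalized discrete Gronwall Lemma~\ref{gronwall} with $\gamma_1=\gamma_2=1/2$, whose conclusion $\varphi_n\leq At_n^{-1/2}$ is precisely \eqref{eq:fullyconv}. The delicate points to verify carefully are that the nonlinear increments are genuinely controlled in $H^{-1}$ (so the $t^{-1/2}$ smoothing, rather than $t^{-1}$, suffices for a summable kernel) and that the logarithmic loss from the linear part is dominated by the $\tau$ contribution so that the final bound is clean.
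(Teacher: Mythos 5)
Your proposal follows the same skeleton as the paper's proof: the splitting $e_n=\theta_n+\rho_n$ with $\rho_n=(R^\mathrm{ms}_k-I)U_n$, the bounds $\|\rho_j\|\leq C(H+k^{d/2}\mu^k)^2t_j^{-1/2}$ and $\|\bar\partial_t\rho_j\|\leq C(H+k^{d/2}\mu^k)^2t_j^{-3/2}$ from Lemma~\ref{lodprojconv} and Theorem~\ref{semilinfemreg}, Duhamel's principle with $E^\mathrm{ms}_{k,n}$, the $H^{-1}$-Lipschitz treatment of the nonlinearity via Lemma~\ref{operatorsA}, Lemma~\ref{esthomogeneous_disc}, and Lemma~\ref{growthlemma}, and the final application of Lemma~\ref{gronwall} with $\gamma_1=\gamma_2=1/2$. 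The gap is in your treatment of the $\bar\partial_t\rho_j$ sum. Doing summation by parts over the \emph{whole} sum, ``exactly as in Lemma~\ref{femconv2}'', leaves you with
\begin{align*}
\tau\sum_{j=1}^n\big\|\bar\partial_t E^\mathrm{ms}_{k,n-j+1}P^\mathrm{ms}_k\rho_j\big\|
\leq C\tau\sum_{j=1}^n t_{n-j+1}^{-1}\|\rho_j\|
\leq C(H+k^{d/2}\mu^k)^2\,\tau\sum_{j=1}^n t_{n-j+1}^{-1}t_j^{-1/2},
\end{align*}
and the kernel $t_{n-j+1}^{-1}$ is not summable: the terms with $j$ close to $n$ produce a factor $t_n^{-1/2}\log(t_n/\tau)$, exactly as in the linear nonsmooth-data estimate. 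Your claim that this logarithm is absorbed by the $\tau$ in $A=C((H+k^{d/2}\mu^k)^2+\tau)$ ``since $\tau\log(t_n/\tau)\leq C$'' is incorrect: the logarithm multiplies $(H+k^{d/2}\mu^k)^2t_n^{-1/2}$, not $\tau$, and the inequality $(H+k^{d/2}\mu^k)^2\log(t_n/\tau)\leq C\big((H+k^{d/2}\mu^k)^2+\tau\big)$ fails as $\tau\to0$ with $H$ fixed. As written, your argument only proves \eqref{eq:fullyconv} with an extra factor $\big(1+\log(T_0/\tau)\big)$, which is a strictly weaker statement than the theorem.

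The paper removes the logarithm by splitting the Duhamel sum at $n_2=\lfloor n/2\rfloor$ (a device from \cite{larsson92}). For $j\leq n_2$ one has $t_{n-j+1}\geq t_n/2$, so summation by parts is harmless on that range: $\tau\sum_{j=1}^{n_2}t_{n-j+1}^{-1}t_j^{-1/2}\leq Ct_n^{-1}\,\tau\sum_{j=1}^{n_2}t_j^{-1/2}\leq Ct_n^{-1/2}$, with no logarithm. For $j>n_2$ one does \emph{not} sum by parts but estimates $\bar\partial_t\rho_j$ directly, using precisely the bound $\|\bar\partial_t\rho_j\|\leq C(H+k^{d/2}\mu^k)^2t_j^{-3/2}$ (i.e., the regularity $\|\bar\partial_t\bar\partial_tU_j\|\leq Ct_j^{-3/2}$) --- the bound you announce at the start of your proposal but then never use; since $t_j\geq t_n/2$ on this range, $\tau\sum_{j=n_2+1}^n t_j^{-3/2}\leq Ct_n^{-1/2}$. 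With this modification the rest of your argument (the nonlinear term giving $C\tau\sum_{j=1}^{n}t_{n-j+1}^{-1/2}\|e_{j-1}\|$, the term $\tau t_n^{-1/2}\|e_0\|\leq C\tau t_n^{-1/2}$ supplying the $\tau$ in $A$, and the generalized Gr\"{o}nwall lemma) goes through and yields \eqref{eq:fullyconv} as stated.
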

\begin{proof}
First we define $e_n = U^\mathrm{ms}_{k,n} - U_n =(U^\mathrm{ms}_{k,n} - R^\mathrm{ms}_k U_n) + (R^\mathrm{ms}_kU_n - U_n)=\theta_{n} + \rho_{n}$. For $\rho_{j}$ we use Lemma~\ref{lodprojconv} to prove the bounds
\begin{align*}
\|\rho_{j}\|&\leq C(H+k^{d/2}\mu^k)^2t^{-1/2}_{j}, \quad j \geq 1,
\end{align*}
and
\begin{align*}
\|\bar\partial_t \rho_{k,j}\| \leq C(H+k^{d/2}\mu^k)^2t^{-3/2}_{j}, \quad j\geq 2.
\end{align*}
For $\theta_{n}$ we have
\begin{align*}
\theta_{n} = E^\mathrm{ms}_{k,n}\theta_{0} + \tau \sum_{j=1}^n E^\mathrm{ms}_{k,n-j+1}P^\mathrm{ms}_k(f(U^\mathrm{ms}_{k,j-1})-f(U_{j-1})-\bar{\partial}_t\rho_{j}).
\end{align*} 

To bound $\|\theta_{k,n}\|$ we first assume $n\geq 2$ and use summation by parts for the first part of the sum. Defining $n_2$ to be the integer part of $n/2$ we can write
\begin{align*}
- \tau \sum_{j=1}^{n_2}E^\mathrm{ms}_{k,n-j+1}P^\mathrm{ms}_k\bar\partial_t\rho_j &= E^\mathrm{ms}_{k,n}P^\mathrm{ms}_k\rho_{0} - E^\mathrm{ms}_{k,n-n_2}P^\mathrm{ms}_k\rho_{n_2} \\ \quad&+ \tau \sum_{j=1}^{n_2}\big(\bar{\partial}_tE^\mathrm{ms}_{n-j+1}\big)P^\mathrm{ms}_k\rho_{j},
\end{align*} 
and $\theta_{n}$ can be rewritten as
\begin{align*}
\theta_{n} &= E^\mathrm{ms}_{k,n}P^\mathrm{ms}_ke_0 - E^\mathrm{ms}_{k,n-n_2}P^\mathrm{ms}_k\rho_{n_2} + \tau \sum_{j=1}^{n_2}\big(\bar{\partial}_tE^\mathrm{ms}_{k,n-j+1}\big)P^\mathrm{ms}_k\rho_{j} \\
&\quad  -\tau\sum_{j=n_2+1}^nE^\mathrm{ms}_{n-j+1}P^\mathrm{ms}_k\bar{\partial}_t\rho_{j} \\
&\quad + \tau\sum_{j=1}^n(\mathcal{A}^\mathrm{ms}_k)^{1/2}E^\mathrm{ms}_{k,n-j+1}(\mathcal{A}^\mathrm{ms}_k)^{-1/2}P^\mathrm{ms}_k(f(U^\mathrm{ms}_{k,j-1})-f(U_{j-1})),
\end{align*}
where we note that $P^\mathrm{ms}_ke_0=0$. To estimate these terms we need the following bounds for $\gamma_1,\gamma_2>0$
\begin{align*}
\tau \sum_{j=1}^nt_{n-j+1}^{-1+\gamma_1}t_j^{-1+\gamma_2} \leq C_{\gamma_1,\gamma_2}t_n^{-1+\gamma_1 + \gamma_2}, \quad
\tau \sum_{j=1}^{n_2}t_{n-j+1}^{-\gamma_1}t_j^{-1+\gamma_2} \leq C_{\gamma_1,\gamma_2}t_n^{-\gamma_1 + \gamma_2}.
\end{align*}
see \cite{larsson92}. Using Lemma~\ref{esthomogeneous_disc} we get
\begin{align*}
 \| \theta_n\| 
&\leq \|\rho_{k,n_2}\| + C\tau \sum_{j=1}^{n_2}t^{-1}_{n-j+1}\|\rho_{k,j}\| + C\tau \sum_{j=n_2+1}^n\|\bar\partial_t\rho_{k,j}\| \\ &\quad+ C\tau \sum_{j=1}^n t^{-1/2}_{n-j+1}\|f(U^\mathrm{ms}_{k,j-1})-f(U_{j-1})\|_{H^{-1}},
\end{align*}
and together with Lemma~\ref{lodprojconv} and Lemma~\ref{growthlemma} this gives
\begin{align*}
\|\theta_n\|&\leq C(H+k^{d/2}\mu^{k})^2\Big(t_{n_2}^{-1/2} + \tau\sum_{j=1}^{n_2}t^{-1}_{n-j+1}t_{j}^{-1/2} + \tau \sum_{j=n_2+1}^nt_j^{-3/2}\Big) \\
&\quad+C\tau \sum_{j=1}^n t^{-1/2}_{n-j+1}\|U^\mathrm{ms}_{k,j-1}-U_{j-1}\|\\
&\leq C(H+k^{d/2}\mu^{k})^2t_n^{-1/2} + C\tau\sum_{j=1}^nt^{-1/2}_{n-j+1}\|e_{j-1}\|.
\end{align*}
Now consider $\theta_{1}$. We can rewrite 
\begin{align*}
\theta_{1} &= E^\mathrm{ms}_{k,1}\theta_{0} + \tau E^\mathrm{ms}_{k,1}P^\mathrm{ms}_k(f(U^\mathrm{ms}_{k,0})-f(U_1)-\bar{\partial}_t\rho_{1}) \\
&=E^\mathrm{ms}_{k,1}P^\mathrm{ms}_ke_{0} + E^\mathrm{ms}_{k,1}P^\mathrm{ms}_k\rho_{1} + \tau E^\mathrm{ms}_{k,1}P^\mathrm{ms}_k(f(U^\mathrm{ms}_{k,0})-f(U_1)),
\end{align*}
and using similar arguments as above
\begin{align*}
\|\theta_{1}\| &\leq C(H+k^{d/2}\mu^k)^2t_1^{-1/2} + \tau t_1^{-1/2}\|e_0\|,
\end{align*}

Hence, we arrive at the estimate
\begin{align*}
\|e_n\| \leq Ct_n^{-1/2}(H+k^{d/2}\mu)^2 + C\tau \sum_{j=1}^nt^{-1/2}_{n-j+1}\|e_{j-1}\|, \quad n \geq 1,
\end{align*}
and we can use Lemma~\ref{gronwall} to conclude \eqref{eq:fullyconv}.
\end{proof}

\section{Numerical Results}\label{sec:numerics}
In this section we present some numerical results to verify the predicted error estimates presented for the linear problem in Section~\ref{sec:error} and the semilinear problem in Section~\ref{sec:semilin}. In both cases the domain is set to the unit square $\Omega=[0,1]\times [0,1]$ and $T = 1$. The domain $\Omega$ is discretized with a uniform triangulation and the interval $[0,T]$ is divided into subintervals of equal length.

The method is tested on two different types of diffusion coefficients $A_1$ and $A_2$ defined as
\begin{align*}
A_1(x) =  \begin{pmatrix}
  1 & 0 \\
  0 & 1
 \end{pmatrix}, \quad
A_2(x) =
 \begin{pmatrix}
  B(x) & 0 \\
  0 & B(x)
 \end{pmatrix},
\end{align*}
where $B$ is piecewise constant with respect to a uniform Cartesian grid of size $2^{-6}$, see Figure~\ref{coefficients}. Note that our choice of $B$ imposes significant multiscale behavior on the diffusion coefficient. Here we expect quadratic convergence in space of the standard finite element with piecewise linear and continuous polynomials (P1-FEM) when $A=A_1$, but poor convergence when $A=A_2$. For the GFEM we expect quadratic convergence in both cases.  

\begin{figure}[h]
\centering
\begin{subfigure}[b]{0.48\textwidth}
    \includegraphics[width=\textwidth]{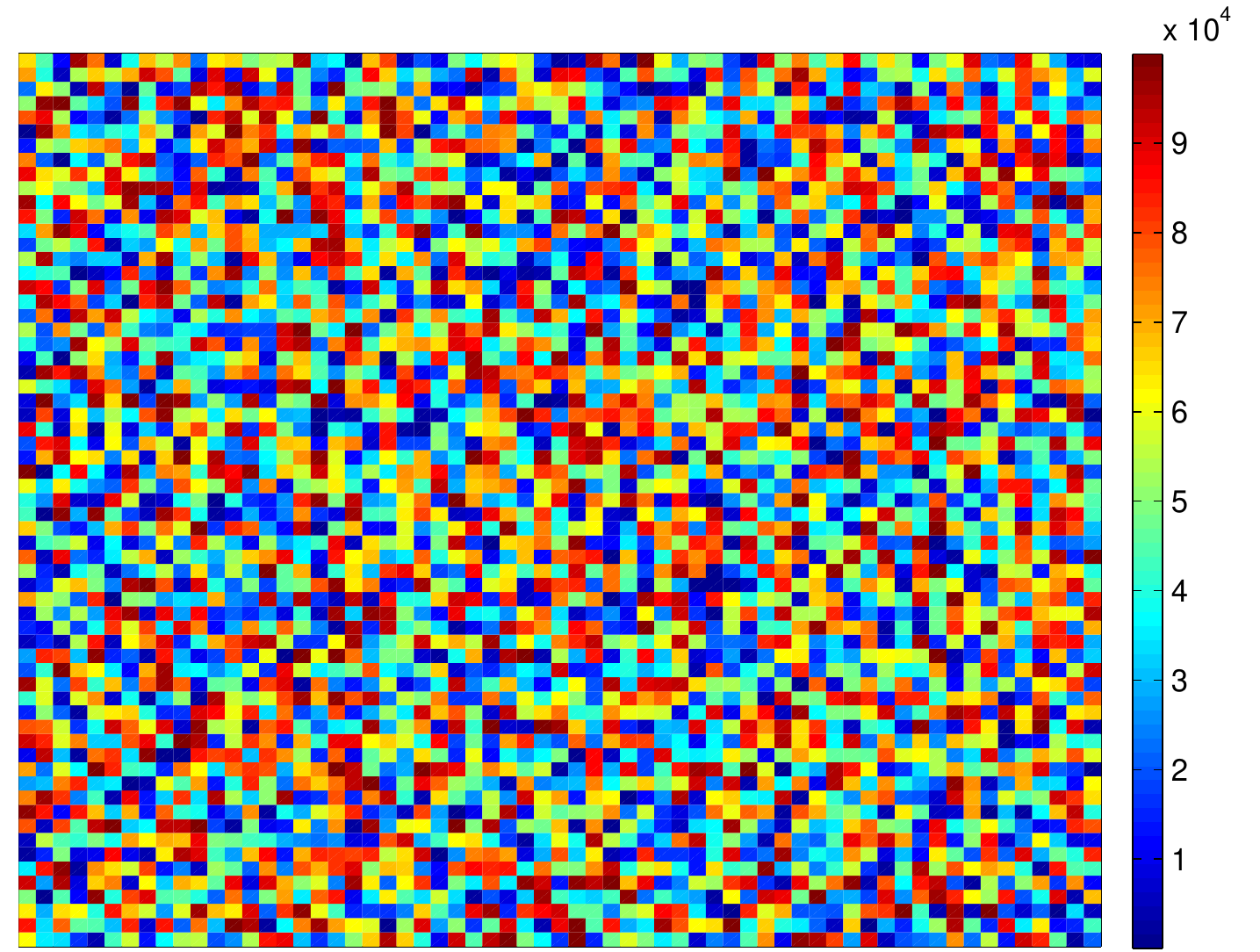}
   \caption{Coefficient $B$ for the linear parabolic problem. The contrast is $\beta/\alpha \approx 10^6$.}
\end{subfigure}
~
\begin{subfigure}[b]{0.48\textwidth}
  \centering
    \includegraphics[width=\textwidth]{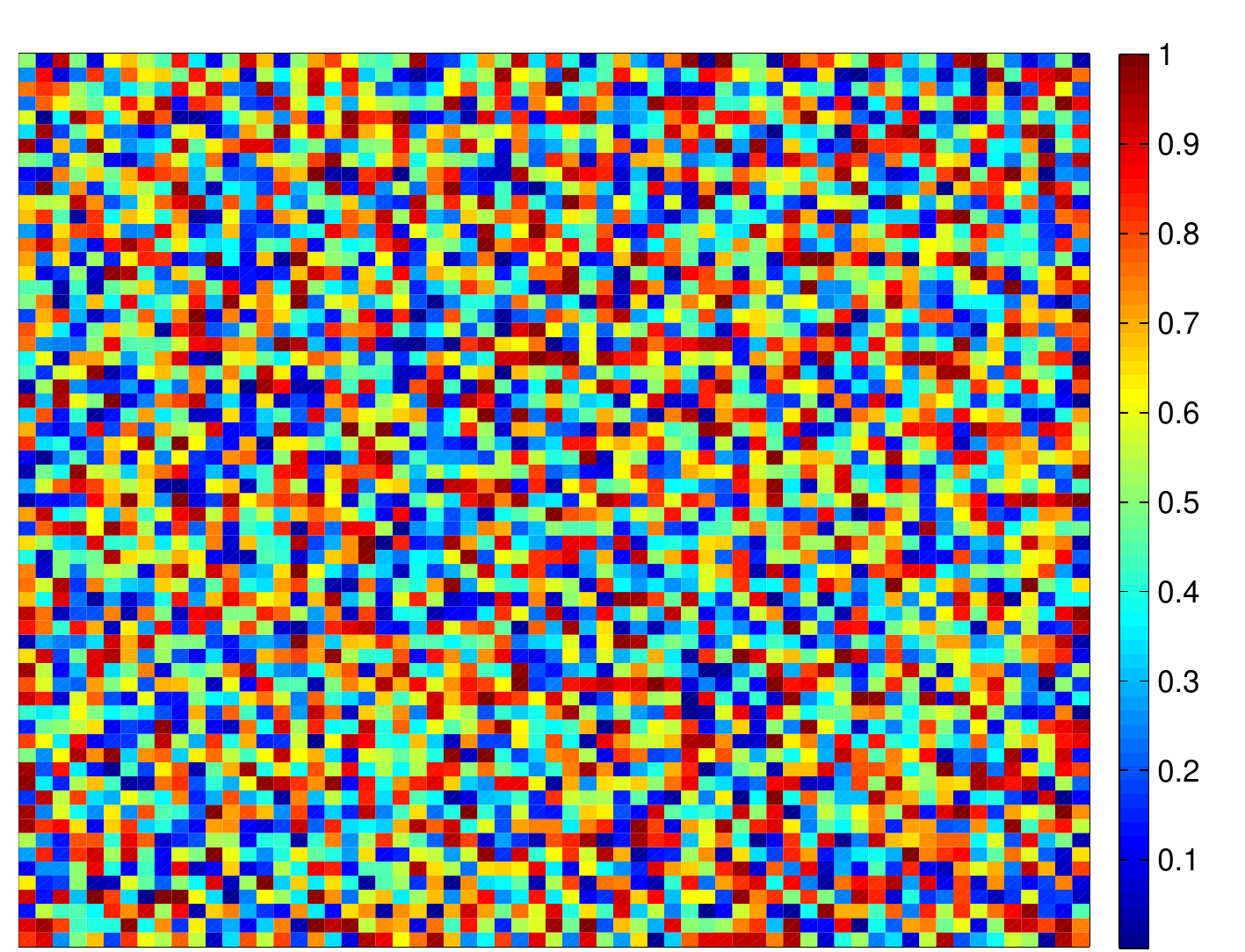}
  \caption{Coefficient $B$ for the semilinear parabolic problem. The contrast is $\beta/\alpha \approx 10^3$.}
\end{subfigure}
\caption{Coefficients for the two parabolic problems.}\label{coefficients}
\end{figure}

We compute the localized GFEM in \eqref{gfemlod} and \eqref{semilinfully}, denoted $U^\mathrm{ms}_{k,n}$, for 5 different values of the coarse grid width, $H= \sqrt{2}\cdot2^{-2}, \sqrt{2}\cdot2^{-3}, \sqrt{2}\cdot2^{-4}, \sqrt{2}\cdot2^{-5},$ and $\sqrt{2}\cdot2^{-6}$. The time step is chosen to $\tau=0.01$ for all problems. The reference mesh $\mathcal{T}_h$ is of size $h=\sqrt{2}\cdot 2^{-7}$ and defines the space $V_h$ on which the localized corrector problems $\phi_{k,x}$ are solved. To measure the error, the solution $U_{n}$ in \eqref{femh} is computed using P1-FEM on the finest scale $h=\sqrt{2}\cdot2^{-7}$ with $\tau=0.01$. 

Note that this experiment measures the error $\|U_{n}-U^\mathrm{ms}_{k,n}\|$. The total error $\|u(t_n)-U^\mathrm{ms}_{k,n}\|$ is also affected by the difference $\|u(t_n)-U_{n}\|$, which is dominating for the smaller values of $H$. We now present the result in two separate sections.

\subsection{Linear parabolic problem}

For the linear parabolic problem \eqref{parabolic} the right hand side is set to $f(x,t)=t$, which fulfills the assumptions for the required regularity. For simplicity the initial data is set to $u_0=1$. Moreover, at each cell in the Cartesian grid we choose a value from the interval $[10^{-1},10^5]$. This procedure gives $B$ a rapidly varying feature and a high contrast $\max(B)/\min(B) \approx 10^6$, see Figure~\ref{coefficients} (left). 

For each value of $H$ the localized GFEM, $U^\mathrm{ms}_{k,n}$, and the corresponding P1-FEM, denoted $U_{H,n}$, are computed. The patch sizes $k$ are chosen such that $k \sim \log(H^{-1})$, that is $k=1,2,2,3,$ and $4$ for the five simulations. When computing $U_{H,n}$ the stiffness matrix is assembled on the fine scale $h$ and then interpolated to the coarser scale. This way we avoid quadrature errors. The convergence results for $A_1$ and $A_2$ are presented in Figure~\ref{plotlinear}, where the error at the final time $t_N$ is plotted against the degrees of freedom $|\mathcal{N}|$. Comparing the plots we can see the predicted quadratic convergence for the localized GFEM. However, as expected, the P1-FEM shows poor convergence on the coarse grids when the diffusion coefficient has multiscale features. We clearly see the pre-asymptotic effects when $H$ does not resolve the fine structure of $B$. 

\begin{figure}[h]
  \centering
\begin{subfigure}[b]{0.48\textwidth}
    \includegraphics[width=\textwidth]{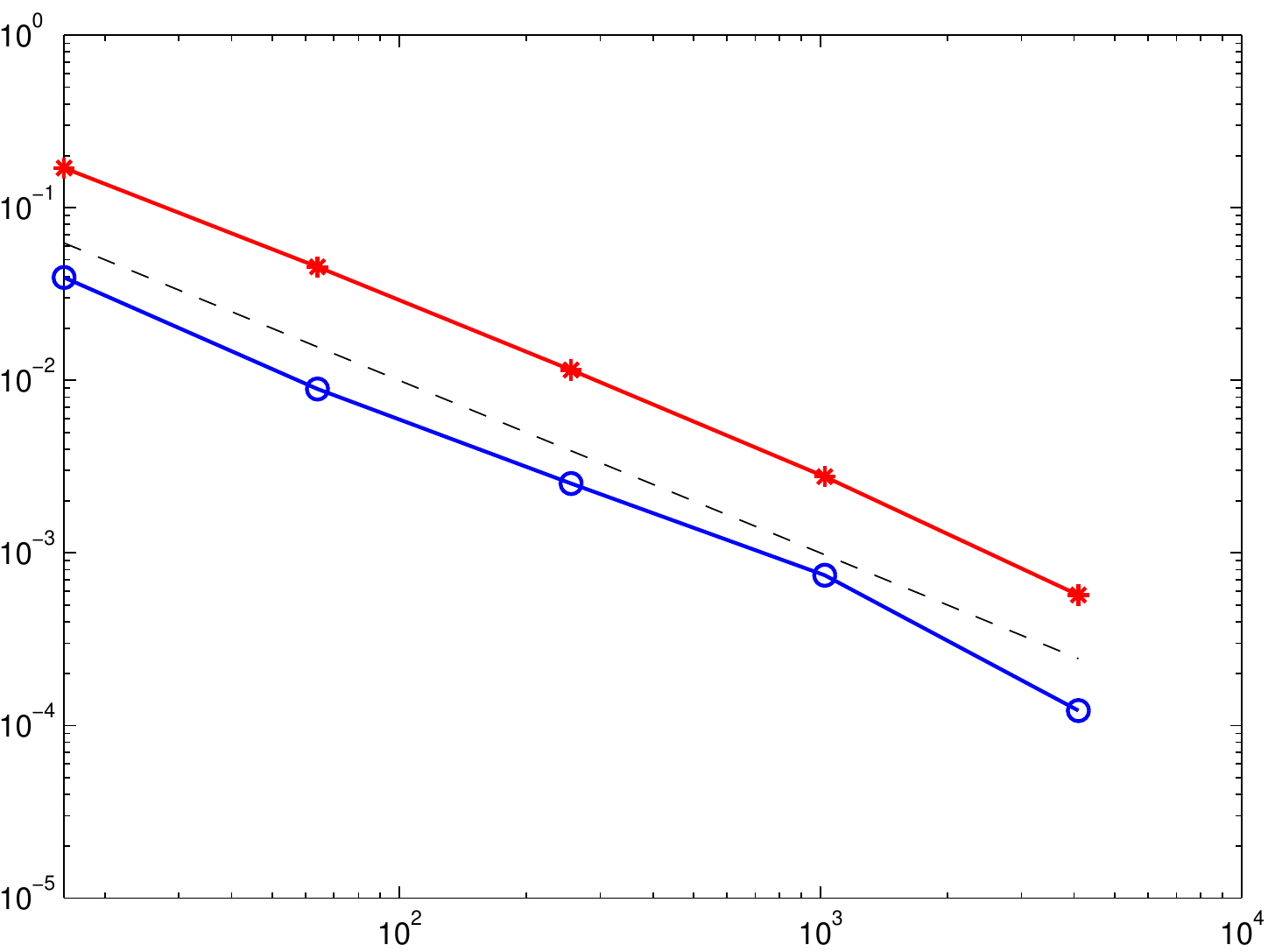}
   \caption{Constant coefficient $A_1$.}\label{plotconst}
\end{subfigure}
~
\begin{subfigure}[b]{0.48\textwidth}
  \centering
    \includegraphics[width=\textwidth]{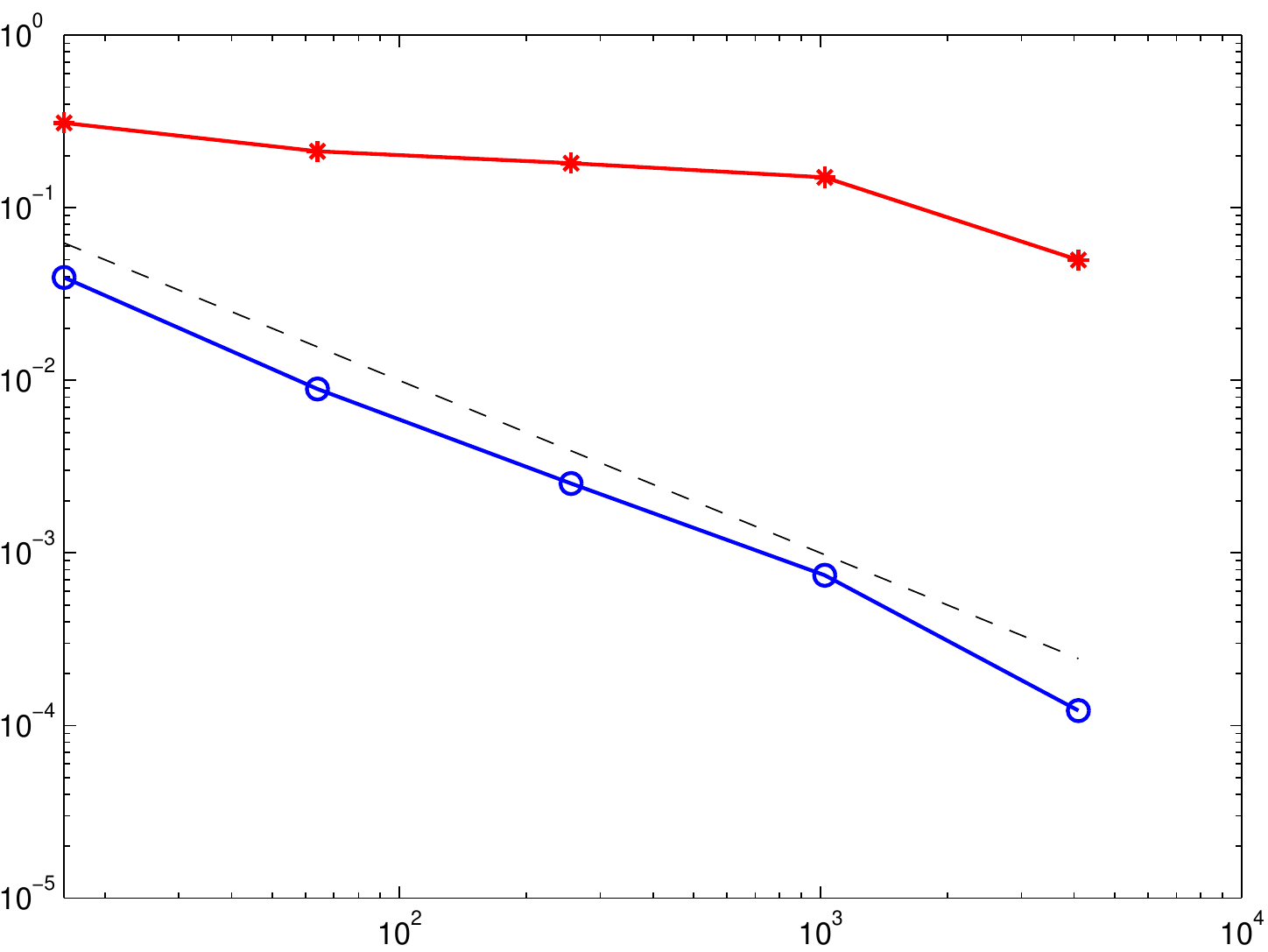}
  \caption{Multiscale coefficient $A_2$.}\label{plotms} 
\end{subfigure}
\caption{Relative $L_2$ errors $\|U^\mathrm{ms}_{k,N}-U_{h,N}\|/\|U_{h,N}\|$ (blue $\circ$) and $\|U_{H,N}-U_{h,N}\|/\|U_{h,N}\|$ (red $\ast$) for the linear parabolic problem plotted against the number of degrees of freedom $|\mathcal{N}|\approx H^{-2}$. The dashed line is $H^2$.}\label{plotlinear}
\end{figure}

\subsection{Semilinear parabolic problem}

For the semilinear problem we study the Allen-Cahn equation, which has right hand side $f(u) = -(u^3-u)$ that fulfills the necessary assumptions. We define the initial data to be $u_0(x,y)=x(1-x)y(1-y)$, which is zero on $\partial \Omega$. The matrix $B$ constructed as in the linear case but with values varying between $10^{-3}$ and $1$. Note that the solution to the Allen-Cahn equation converges to zero rapidly if the diffusion is too high, thus the smaller contrast $\max(B)/\min(B)\approx10^3$ in this case, see Figure~\ref{coefficients} (right). However, $B$ is still rapidly varying. As in the linear case we now compute the localized GFEM approximations $U^\mathrm{ms}_{k,n}$ and the corresponding P1-FEM, $U_{H,n}$. The patch sizes are chosen to $k=1,2,2,3,$ and $4$, for the five simulations. The convergence results for $A_1$ and $A_2$ are presented in Figure~\ref{plotsemilin}. We can draw the same conclusions as in the linear case. The localized GFEM shows predicted quadratic convergence in both cases, but P1-FEM shows poor convergence on the coarse grids when the diffusion coefficient has multiscale features. 

\begin{figure}[h]
  \centering
\begin{subfigure}[b]{0.48\textwidth}
    \includegraphics[width=\textwidth]{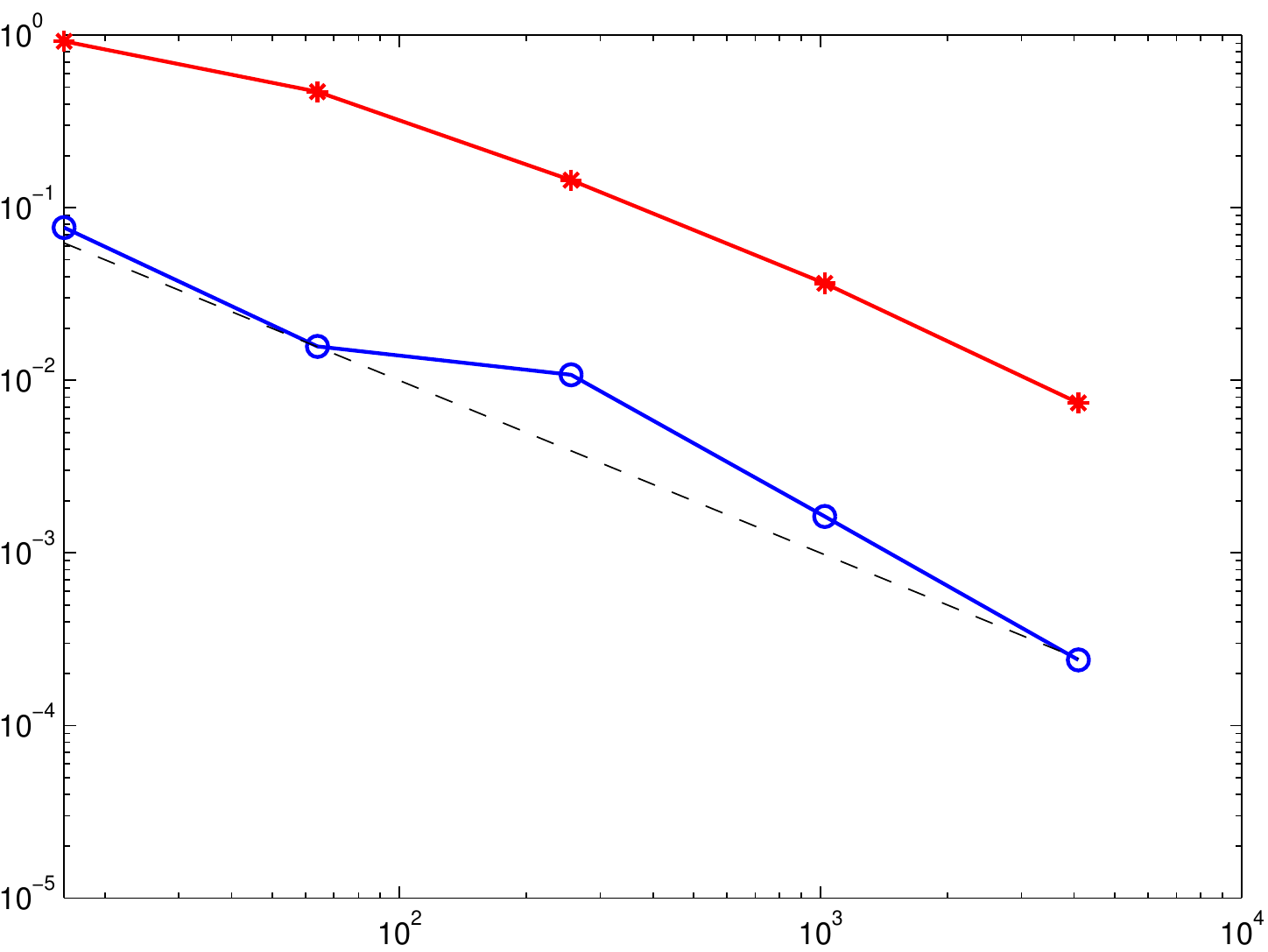}
   \caption{Constant coefficient $A_1$.}\label{plotconstsemilin}
\end{subfigure}
~
\begin{subfigure}[b]{0.48\textwidth}
  \centering
    \includegraphics[width=\textwidth]{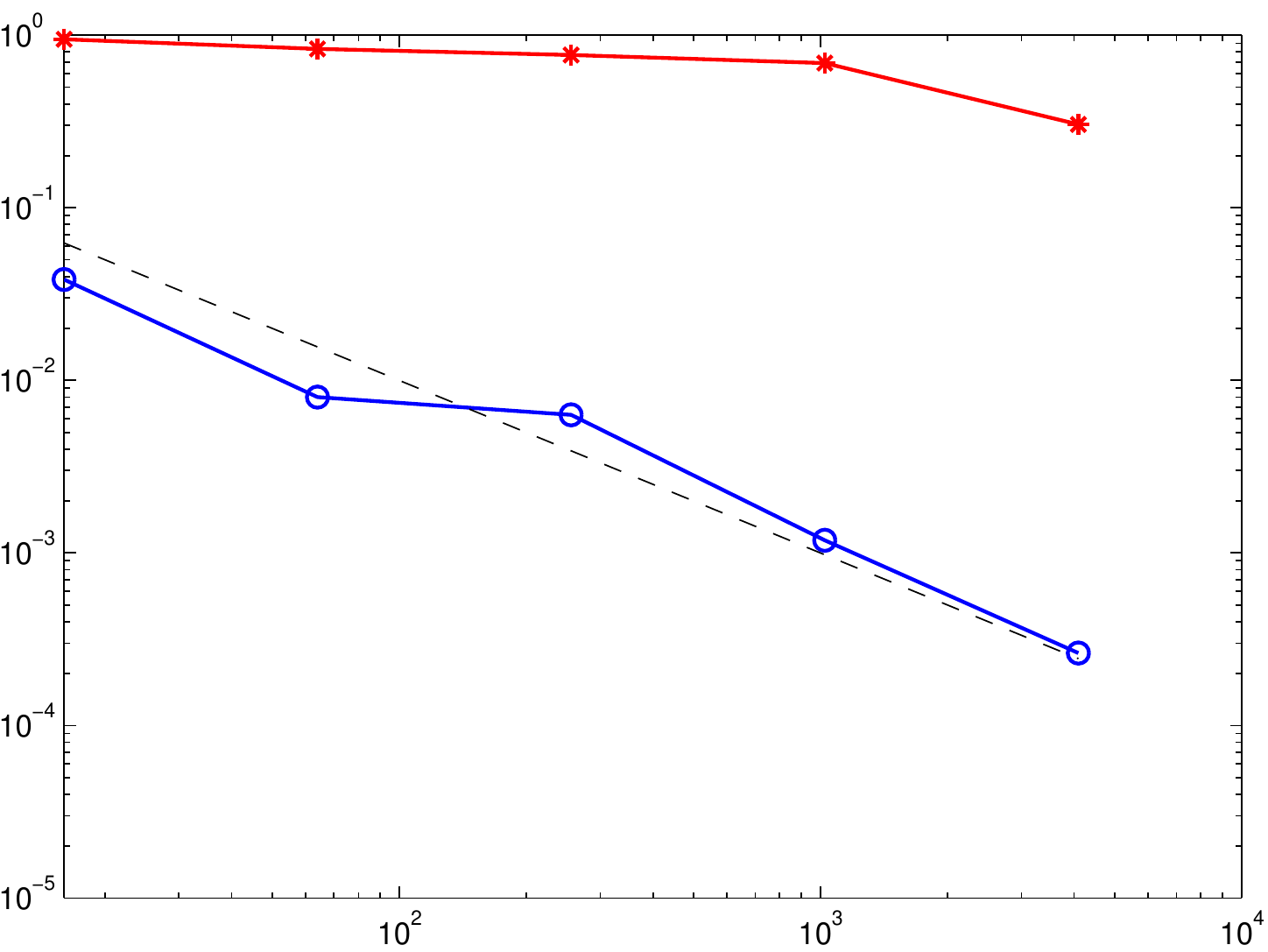}
  \caption{Multiscale coefficient $A_2$.}\label{plotmssemilin} 
\end{subfigure}
\caption{Relative $L_2$ errors $\|U^\mathrm{ms}_{k,N}-U_{h,N}\|/\|U_{h,N}\|$ (blue $\circ$) and $\|U_{H,N}-U_{h,N}\|/\|U_{h,N}\|$ (red $\ast$) for the semilinear parabolic problem plotted against the number of degrees of freedom $|\mathcal{N}|\approx H^{-2}$. The dashed line is $H^2$.}\label{plotsemilin}
\end{figure}

\bibliographystyle{plain}
\bibliography{parabolic_ref}

\def\cprime{$'$}
\begin{thebibliography}{10}

\bibitem{Abdulle2014}
Assyr Abdulle and Patrick Henning.
\newblock Localized orthogonal decomposition method for the wave equation with
  a continuum of scales.
\newblock {\em Submitted}, 2014.

\bibitem{Babuska11}
Ivo Babuska and Robert Lipton.
\newblock Optimal local approximation spaces for generalized finite element
  methods with application to multiscale problems.
\newblock {\em Multiscale Model. Simul.}, 9(1):373--406, 2011.

\bibitem{Bank2014}
Randolph~E. Bank and Harry Yserentant.
\newblock On the {$H^1$}-stability of the {$L_2$}-projection onto finite
  element spaces.
\newblock {\em Numer. Math.}, 126(2):361--381, 2014.

\bibitem{Carstensen1999}
Carsten Carstensen and R{\"u}diger Verf{\"u}rth.
\newblock Edge residuals dominate a posteriori error estimates for low order
  finite element methods.
\newblock {\em SIAM J. Numer. Anal.}, 36(5):1571--1587 (electronic), 1999.

\bibitem{Engquist03}
Weinan E and Bjorn Engquist.
\newblock The heterogeneous multiscale methods.
\newblock {\em Commun. Math. Sci.}, 1(1):87--132, 2003.

\bibitem{Evans2010}
Lawrence~C. Evans.
\newblock {\em Partial Differential Equations}, volume~19 of {\em Graduate
  Studies in Mathematics}.
\newblock American Mathematical Society, Providence, RI, second edition, 2010.

\bibitem{Henning2014}
Patrick Henning and Axel M{\aa}lqvist.
\newblock Localized orthogonal decomposition techniques for boundary value
  problems.
\newblock {\em SIAM J. Sci. Comput.}, 36(4):A1609--A1634, 2014.

\bibitem{HMP2014}
Patrick Henning, Axel M{\aa}lqvist, and Daniel Peterseim.
\newblock A localized orthogonal decomposition method for semi-linear elliptic
  problems.
\newblock {\em ESAIM Math. Model. Numer. Anal.}, 48(5):1331--1349, 2014.

\bibitem{Hughes98}
Thomas J.~R. Hughes, Gonzalo~R. Feij{\'o}o, Luca Mazzei, and Jean-Baptiste
  Quincy.
\newblock The variational multiscale method---a paradigm for computational
  mechanics.
\newblock {\em Comput. Methods Appl. Mech. Engrg.}, 166(1-2):3--24, 1998.

\bibitem{Lady68}
O.~A. Lady{\v{z}}enskaja, V.~A. Solonnikov, and N.~N. Ural{\cprime}ceva.
\newblock {\em Linear and Quasilinear Equations of Parabolic Type}.
\newblock Translated from the Russian by S. Smith. Translations of Mathematical
  Monographs, Vol. 23. American Mathematical Society, Providence, R.I., 1968.

\bibitem{Larson07}
Mats~G. Larson and Axel M{\aa}lqvist.
\newblock Adaptive variational multiscale methods based on a posteriori error
  estimation: energy norm estimates for elliptic problems.
\newblock {\em Comput. Methods Appl. Mech. Engrg.}, 196(21-24):2313--2324,
  2007.

\bibitem{larsson92}
Stig Larsson.
\newblock {\em Nonsmooth data error estimates with applications to the study of
  the long-time behavior of finite element solutions of semilinear parabolic
  problems}.
\newblock preprint 1992-3. Department of Mathematics, Chalmers University of
  Technology, 1992.

\bibitem{larsson06}
Stig Larsson.
\newblock Semilinear parabolic partial differential equations: theory,
  approximation, and application.
\newblock In {\em New trends in the mathematical and computer sciences},
  volume~3 of {\em Publ. ICMCS}, pages 153--194. Int. Cent. Math. Comp. Sci.
  (ICMCS), Lagos, 2006.

\bibitem{Malqvist2014}
Axel M{\aa}lqvist and Daniel Peterseim.
\newblock Localization of elliptic multiscale problems.
\newblock {\em Math. Comp.}, 83(290):2583--2603, 2014.

\bibitem{Malqvist2013}
Axel M\r{a}lqvist and Daniel Peterseim.
\newblock {Computation of eigenvalues by numerical upscaling}.
\newblock {\em to appear in Numer. Math}.

\bibitem{Owhadi08}
Houman Owhadi and Lei Zhang.
\newblock Numerical homogenization of the acoustic wave equations with a
  continuum of scales.
\newblock {\em Comput. Methods Appl. Mech. Engrg.}, 198(3-4):397--406, 2008.

\bibitem{Owhadi14}
Houman Owhadi, Lei Zhang, and Leonid Berlyand.
\newblock Polyharmonic homogenization, rough polyharmonic splines and sparse
  super-localization.
\newblock {\em ESAIM Math. Model. Numer. Anal.}, 48(2):517--552, 2014.

\bibitem{Thomee2006}
Vidar Thom\'{e}e.
\newblock {\em {Galerkin Finite Element Methods for Parabolic Problems}}.
\newblock Springer Series in Computational Mathematics. Springer-Verlag,
  Berlin, second edition, 2006.

\end{thebibliography}
\end{document}